\newtheorem{theorem}{Theorem}[section]
\newtheorem{corollary}[theorem]{Corollary}
\newtheorem{proposition}[theorem]{Proposition}
\newtheorem{lemma}[theorem]{Lemma}
\theoremstyle{definition}
\newtheorem{examples}[theorem]{Examples}
\newtheorem{remark}[theorem]{Remark}
\newtheorem{problem}{Problem}
\theoremstyle{remark}
\begin{document}

\begin{frontmatter}

\title{On Simpleness of Semirings and Complete Semirings}

\author[yk]{Y. Katsov}\ead{katsov@hanover.edu}
\author[tgn]{T. G. Nam}\ead{trangiangnam05@yahoo.com}
\author[jz]{J. Zumbr\"agel\fnref{fn1}}\ead{jens.zumbragel@ucd.ie}

\fntext[fn1]{The work J. Zumbr\"agel was supported by the Science
  Foundation Ireland under Grants 06/MI/006 and 08/IN.1/I1950.}

\address[yk]{%
  Department of Mathematics and Computer Science\\
  Hanover College, Hanover, IN 47243--0890, USA}
\address[tgn]{%
  Department of Mathematics\\
  Dong Thap Pedagogical University, Dong Thap, Vietnam}
\address[jz]{%
  Claude Shannon Institute, School of Mathematical Sciences\\
  University College Dublin, Belfield, Dublin 4, Ireland}

\begin{abstract}
  In this paper, among other results, there are described (complete)
  simple -- simultaneously ideal- and congruence-simple --
  endomorphism semirings of (complete) idempotent commutative monoids;
  it is shown that the concepts of simpleness, congruence-simpleness
  and ideal-simpleness for (complete) endomorphism semirings of
  projective semilattices (projective complete lattices) in the
  category of semilattices coincide iff those semilattices are finite
  distributive lattices; there are described congruence-simple
  complete hemirings and left artinian congruence-simple complete
  hemirings.   Considering the relationship between the concepts of
  `Morita equivalence' and `simpleness' in the semiring setting, we
  have obtained the following results: The ideal-simpleness,
  congruence-simpleness and simpleness of semirings are Morita
  invariant properties; A complete description of simple semirings
  containing the infinite element; The representation theorem --
  ``Double Centralizer Property'' -- for simple semirings; A complete
  description of simple semirings containing a projective minimal
  one-sided ideal; A characterization of ideal-simple semirings having
  either infinite elements or a projective minimal one-sided ideal; A
  confirmation of Conjecture of \cite{kat:thcos} and solving Problem
  3.9 of \cite{kat:ofsos} in the classes of simple semirings
  containing either infinite elements or projective minimal left
  (right) ideals, showing, respectively, that semirings of those
  classes are not perfect and the concepts of `mono-flatness' and
  `flatness' for semimodules over semirings of those classes are the
  same.  Finally, we give a complete description of ideal-simple,
  artinian additively idempotent chain semirings, as well as of
  congruence-simple, lattice-ordered semirings.
\end{abstract}


\begin{keyword}
  ideal-simple semirings, congruence-simple semirings, simple
  semirings, distributive lattices, lattice-ordered semirings,
  complete semirings.  \medskip

  {\it 2000 Mathematics Subject Classifications:} primary
  16Y60, 06D99, 08A30; secondary 06A99, 06F99
\end{keyword}

\end{frontmatter}

\section{Introduction}

As is well known, structure theories for algebras of classes/varieties
of algebras constitute an important ``classical'' area of the
sustained interest in algebraic research.  In such theories, so-called
simple algebras, \textit{i.e.}, algebras possessing only two trivial
congruences -- the identity and universal ones -- play a very
important role of ``building blocks''.  In contrast to the varieties of
groups and rings, the research on simple semirings has been just
recently started, and therefore not much on the subject is
known.  Investigating semirings and their representations, one should
undoubtedly use methods and techniques of both ring and lattice theory
as well as diverse techniques and methods of categorical and universal
algebra.  Thus, the wide variety of the algebraic techniques involved
in studying semirings, and their representations/semimodules, perhaps
explains why the research on simple semirings is still behind of that
for rings and groups (for some recent activity and results on this
subject one may consult \cite{mitfenog:cfcs},
\cite{bshhurtjankepka:scs}, \cite{monico:ofcss}, \cite{bashkepka:css},
\cite{kalakepka:anofgiscs}, \cite{zumbr:cofcsswz},
\cite{jezkepkamaroti:tesoas}, and \cite{knt:mosssparp}).

At any rate, this paper concerns the ideal- and congruence-simpleness
-- in a semiring setting, these two notions of simpleness are not the
same (see Examples 3.7 below) and should be differed -- for some
classes of semirings, as well as the ideal- and congruence-simpleness
for complete semirings.  The complete semirings originally appeared on
the ``arena'' while considering both the generalization of classical
formal languages to formal power series with coefficients in semirings
and automata theory with multiplicities in semirings and, therefore,
requiring the more general concepts of `infinite summability' for
semiring elements (see, for example, \cite{eilenberg:alm},
\cite{weinert:gsos}, \cite{heb:eatusmaohuh}, \cite{ka:olics}, and
\cite{hebwei:sataaics}).

The paper is organized as follows.  In Section 2, for the reader's
convenience, there are included all subsequently necessary notions and
facts on semirings and semimodules.  Section 3, among other results,
contains two main results of the paper -- Theorem~3.3, describing
simple (\textit{i.e.}, simultaneously ideal- and congruence-simple)
endomorphism semirings of idempotent commutative monoids, and its
``complete'' analog -- Theorem~3.6.   We also show (Corollaries 3.8 and
3.9) that the concepts of simpleness, congruence-simpleness and
ideal-simpleness for endomorphism semirings (complete endomorphism
semirings) of projective semilattices (projective complete lattices)
in the category of semilattices with zero coincide iff those
semilattices are finite distributive lattices.  Theorem~4.6 and
Corollary 4.7 of Section 4, describing all congruence-simple complete
hemirings and left artinian congruence-simple complete hemirings,
respectively, are among the main results of the paper, too.

In Section 5, considering the relationship between the concepts of
`Morita equivalence' and `simpleness' in the semiring setting, there
have been established the following main results of the paper: The
ideal-simpleness, congruence-simpleness and simpleness of semirings
are Morita invariant properties (Theorem~5.6); A complete description
of simple semirings containing the infinite element (Theorem~5.7); The
representation theorem --``Double Centralizer Property''-- for simple
semirings (Theorem~5.10); A complete description of simple semirings
containing a projective minimal one-sided ideal (Theorem~5.11); A
characterization of ideal-simple semirings having either infinite
elements or a projective minimal one-sided ideal (Theorem~5.12 and
Corollary~5.13); A confirmation of Conjecture of \cite{kat:thcos} and
solving Problem~3.9 of \cite{kat:ofsos} in the classes of simple
semirings containing either infinite elements or projective minimal
left (right) ideals, showing, respectively, that semirings of those
classes are not perfect (Theorem~5.15 and Corollary 5.16) and the
concepts of `mono-flatness' and `flatness' for semimodules over
semirings of those classes coincide (Theorem~5.17).

Section 6 contains two more central results of the paper: A complete
description of ideal-simple, artinian additively idempotent chain
semirings (Theorem~6.4 and Remark~6.6); And a complete description of
congruence-simple, lattice-ordered semirings (Theorem~6.7 and
Corollary 6.8).

Finally, in the course of the paper, there have been stated several,
in our view interesting and promising, problems; also, all notions and
facts of categorical algebra, used here without any comments, can be
found in \cite{macl:cwm}, and for notions and facts from semiring
theory, universal algebra and lattice theory, we refer to
\cite{golan:sata}, \cite{gratzer:ua} and \cite{birkhoff:lathe},
respectively.

\section{Preliminaries}

Recall \cite{golan:sata} that a \textit{hemiring} is an algebra
$(R,+,\cdot ,0)$ such that the following conditions are
satisfied:
\begin{enumerate}[\ \ (1)\ ]
\item $(R,+,0)$ is a commutative monoid with identity element $0$;
\item $(R,\cdot)$ is a semigroup;
\item Multiplication distributes over addition from either side;
\item $0r=0=r0$ for all $r\in R$.
\end{enumerate}

Then, a hemiring $(R,+,\cdot ,0)$ is called a \textit{semiring} if its
multiplicative reduct $(R,\cdot )$ is actually a monoid $(R,\cdot ,1)$
with the identity $1$.  A \textit{proper} hemiring is a hemiring that
is not a ring.

As usual, a \textit{left} $R$-\textit{semimodule} over a hemiring $R$
is a commutative monoid $(M,+,0_{M})$ together with a scalar
multiplication $(r,m)\mapsto rm$ from $R\times M$ to $M$ which
satisfies the following identities for all $r,r'\in R$ and
$m,m'\in M$:
\begin{enumerate}[\ \ (1)\ ]
\item $(rr')m=r(r'm)$;
\item $r(m+m')=rm+rm'$;
\item $(r+r')m=rm+r'm$;
\item $r0_{M}=0_{M}=0m$.
\end{enumerate}

\textit{Right semimodules} over a hemiring $R$ and homomorphisms
between semimodules are defined in the standard manner.  Considering
semimodules over a semiring $R$, we always presume that they are
unital, \textit{i.e.}, $1m=m$ for all $m\in M$.  And, from now on, let
$\mathcal{M} _R$ and $_R\mathcal{M}$ denote the categories of
right and left semimodules, respectively, over a hemiring $R$.  As
usual (see, for example, \cite[Chapter 17]{golan:sata}), in the
category $_R\mathcal{M}$, a \textit{free} (left) semimodule
$\sum_{i\in I}R_{i}$, $R_{i}\cong {}_RR$, $i\in I$, with a basis set
$I$ is a direct sum (a coproduct) of $|I|$ copies of the regular
semimodule $_RR$.  And a \textit{projective} left semimodule in
$_R\mathcal{M}$ is just a retract of a free semimodule.  A
semimodule $ _RM $ is \textit{finitely generated} iff it is a
homomorphic image of a free semimodule with a finite basis set.

We need to recall natural extensions of the well known for rings and
modules notions to a context of semirings and semimodules.  Thus, the
notion of a \textit{superfluous} (or \textit{small}) subsemimodule: A
subsemimodule $S\subseteq M$ is superfluous (written $S\subseteq _sM$)
if $S+N=M \Rightarrow N=M$, for any subsemimodule $N\subseteq M$.
Then, taking into consideration Proposition~5 and Definition~4 of
\cite{tuyenthang:oss}, for any semimodule $_RA\in|_R\mathcal{M}|$, we
have the subsemimodule ${\rm Rad}(A)$ of $_RA$, the \textit{radical}
of $_RA$, defined by ${\rm Rad}(A) := \sum\, \{S\mid S\subseteq_sA\} =
{\bigcap\, \{M\mid M\text{ is a maximal subsemimodule of }A\,\}}$.
One can also extend the notions (as well as results involving them) of
\textit{Descending Chain Condition} and \textit{artinian module} of
the theory of modules over rings to a context of semimodules over
semirings in an obvious fashion (see, \textit{e.g.},
\cite{knt:ossss}).

Following \cite{gw:oeics} (see also \cite{golan:sata}), a left
semimodule $M$ over a hemiring $R$ is called \textit{complete} if and
only if for every index set $\Omega $ and for every family $\{m_i \mid
i\in \Omega\}$ of elements of $M$ we can define an element $\sum_{i\in
  \Omega }m_{i}$ of $M$ such that the following conditions are
satisfied:
\begin{enumerate}[\ \ (1)\ ]
\item $\sum_{i \in \emptyset}m_i= 0_M$,
\item $\sum_{i\in \{1\}}m_i = m_1$,
\item $\sum_{i\in \{1, 2\}m_i} = m_1 + m_2$,
\item If $\Omega = \bigcup _{j\in \Lambda }\Omega _{i}$ is a partition of
  $\Omega $ into the disjoint union of nonempty subsets then
  \begin{equation*}
    \sum_{i\in \Omega }m_{i} = 
    \sum_{j\in \Lambda }\Big(\sum_{i\in \Omega _{j}}m_{i}\Big)\:,
  \end{equation*}
\item If $r\in R$ and $\{m_i \mid i\in \Omega\}\subseteq M$, then
  $r \big(\sum_{i\in \Omega}m_i\big) = \sum_{i\in \Omega}rm_i$.
\end{enumerate}

As an immediate consequence of this definition, one gets that if $\pi
$ is a permutation of $\Omega $ then
\begin{equation*}
  \sum_{i\in \Omega }m_{i} = \sum_{j\in \Omega } 
  \Big(\sum_{i\in \{\pi(j)\}}m_{i} \Big) = \sum_{j\in \Omega }m_{\pi (j)} \:.
\end{equation*}

Complete right $R$-semimodules are defined analogously.  For a
complete left (right) $R$-semimodule $M$, we always have $\sum
_{i\in \Omega }0_{M} = \sum _{i\in \Omega }0_R0_{M} = 0_R\big(\sum
_{i\in \Omega }0_{M}\big) = 0_{M}$.

A hemiring is complete if and only if it is complete as a left and
right semimodule over itself.  The Boolean semifield
$\mathbf{B}=\{0,1\}$ -- an idempotent two element semiring -- is a
complete semiring if we define $\sum_{i\in \Omega}r_{i}=0$ iff
$r_{i}=0$ for all $i\in \Omega $, and to be $1$ otherwise.

If $M$ and $N$ are complete left $R$-semimodules then a
$R$-homomorphism $ \alpha: M\longrightarrow N$ is \textit{complete} if
and only if it satisfies the condition that $\alpha (\sum _{i\in
  \Omega}m_{i}) = \sum _{i\in \Omega}\alpha (m_{i})$ for every index
set $\Omega$.  Complete homomorphism of complete hemirings (semirings)
are defined analogously.  We will denote the set of all complete
$R$-homomorphisms from $M$ to $N$ by ${\rm CHom}_R(M,N)$ .  Similarly,
we denote the set of all complete $R$-endomorphisms of a complete left
$R$-semimodule $M$ by ${\rm CEnd}_R(M)$.  Then ${\rm {\rm CEnd}}_R(M)$
can be made a complete semiring with the infinite summation to be
defined by $ (\sum_{i\in \Omega }\alpha )(m) = \sum_{i\in
  \Omega}\alpha_{i}(m)$ for every set $\{\alpha_i\mid i\in \Omega
\}\subseteq {\rm CEnd}_R(M)$ and $m\in M$.

Following \cite{bshhurtjankepka:scs}, a hemiring $R$ is
\textit{congruence-simple} if the diagonal, $\vartriangle _R$, and
universal, $ R^2$, congruences are the only congruences on $R$; and
$R$ is \textit{ideal-simple} if $0$ and $R$ are its only ideals.  A
hemiring $R$ is said to be \textit{simple} if it is simultaneously
congruence- and ideal-simple.

Any hemiring $R$ clearly can be considered as a subsemiring of the
endomorphism semiring $\mathbf{E}_R$ of its additive reduct $(R,+,0)$,
and, by\ \cite[Proposition 3.1]{zumbr:cofcsswz}, the
congruence-simpleness of a proper hemiring $R$\ implies that the
reduct $(R,+,0)$ is an idempotent monoid, \textit{i.e.}, $(R,+,0)$ is,
in fact, an upper semilattice.  In light of these observations, it is
reasonable (and we will do that in the next section) first to consider
the concepts of simpleness introduced above for the endomorphism
semiring\ $\mathbf{E}_M$ of a semilattice -- an idempotent commutative
monoid -- $(M,+,0)$.  Recall (see \cite[Definition
1.6]{zumbr:cofcsswz}) that a subhemiring $S\subseteq \mathbf{E}_M$ of
the endomorphism semiring\ $\mathbf{E}_M$ is \textit{dense} if it
contains for all $a,b\in M$ the endomorphisms $e_{a,b}\in
\mathbf{E}_M$ defined by
\begin{equation*}
  e_{a,b}(x) :=
  \begin{cases}
    0 &\text{if } x+a=a,\\
    b &\text{otherwise,}
  \end{cases}\quad
  \text{for any }x\in M\:.
\end{equation*}

We will need the following results.

\begin{lemma}[{\cite[Lemma~2.2]{zumbr:cofcsswz}}]
  For any $a,b,c,d\in M$ and $f\in\mathbf{E}_M$ we have 
  $f\circ e_{a,b} = e_{a,f(b)}$ and
  \begin{equation*}
    e_{c,d}\circ f\circ e_{a,b} :=
    \begin{cases}
      0 &\text{if } f(b)\le c,\\
      e_{a,d} &\text{otherwise}.
    \end{cases}
  \end{equation*}
\end{lemma}

\begin{theorem}[{\cite[Theorem~1.7]{zumbr:cofcsswz}}]
  A proper finite hemiring $R$ is congruence-simple iff $|R|\le 2$, or
  $R$ is isomorphic to a dense subhemiring $S\subseteq\mathbf{E}_M$ of
  the endomorphism semiring $\mathbf{E}_M$ of a finite semilattice
  $(M,+,0)$.
\end{theorem}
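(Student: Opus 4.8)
The plan is to prove both implications by passing between $R$ and a subhemiring of $\mathbf{E}_M$ via the left regular representation, using the composition formula of the preceding Lemma as the main computational engine. Write $M:=(R,+,0)$; since $R$ is proper and congruence-simple, Proposition~3.1 of \cite{zumbr:cofcsswz} guarantees that $M$ is a finite semilattice, which I regard as a join-semilattice with bottom $0$ and, by finiteness, a top element $\top=\sum_{x\in M}x$. Left multiplication gives a hemiring homomorphism $\lambda\colon R\to\mathbf{E}_M$, $\lambda_r(x)=rx$, whose kernel $\ker\lambda=\{(r,s)\mid rx=sx\text{ for all }x\}$ is a hemiring congruence on $R$. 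By congruence-simpleness this kernel is either $\nabla$ or $\triangle$, and these two cases will produce the two alternatives in the statement.

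First I would treat the degenerate case $\ker\lambda=\nabla$. Here $\lambda_r=\lambda_0=0$ for all $r$, so $R^2=0$; since multiplication is trivial, every additive (semilattice) congruence of $M$ is automatically a hemiring congruence, so congruence-simpleness of $R$ forces the semilattice $M$ to have only the trivial congruences. I would then show that a finite such semilattice satisfies $|M|\le 2$: if $|M|\ge 3$ there is an atom $p\ne\top$, and the $+$-homomorphism $x\mapsto x+p$ has kernel congruence which is nontrivial, since it identifies $0$ with $p$ but separates $0$ from $\top$ (as $0+p=p\ne\top=\top+p$), a contradiction. Hence $|R|=|M|\le 2$.

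In the principal case $\ker\lambda=\triangle$ the representation is faithful, so $R\cong S:=\lambda(R)$, a subhemiring of $\mathbf{E}_M$ whose additive reduct is $M$ itself. It remains to prove that $S$ is \emph{dense}, i.e.\ contains every $e_{a,b}$. This is the step I expect to be the main obstacle: it is a Jacobson-type density statement, where congruence-simpleness of $R$ must be converted into enough richness of the faithful action on $M$ to manufacture each $e_{a,b}$ inside $S$. My plan is to first secure one nonzero $e$-type element of $S$ and then propagate it: given $e_{a,b'}\in S$ together with a suitable $e_{c,d}\in S$, the identity $e_{c,d}\circ f\circ e_{a,b'}=e_{a,d}$ of the preceding Lemma yields further $e$-maps, while the orbit relation $f\circ e_{0,b_0}=e_{0,f(b_0)}$ spreads the second index over the whole $S$-orbit of $b_0$. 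The crux is to argue that the failure of some $e_{a,b}$ to lie in $S$ would yield a nontrivial proper hemiring congruence on $S\cong R$, contradicting simpleness; establishing this, and hence full density, is the technical heart of the argument.

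For the converse I would argue directly. If $|R|\le 2$, then $R$ admits at most the two congruences $\triangle$ and $\nabla$, so it is congruence-simple. Now suppose $R\cong S$ with $S\subseteq\mathbf{E}_M$ dense, and let $\rho\ne\triangle$ be a congruence on $S$; I claim $\rho=\nabla$. Choose $f\mathrel{\rho}g$ with $f\ne g$, pick $b$ with $f(b)\ne g(b)$, and, swapping $f$ and $g$ if necessary, assume $f(b)\not\le g(b)$; set $c:=g(b)$. For arbitrary $a,d\in M$ the preceding Lemma gives $e_{c,d}\circ f\circ e_{a,b}=e_{a,d}$ and $e_{c,d}\circ g\circ e_{a,b}=0$, so compatibility of $\rho$ yields $e_{a,d}\mathrel{\rho}0$ for all $a,d$. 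In particular the top endomorphism $\top_E:=e_{0,\top}$, which satisfies $f\le\top_E$ and hence $f+\top_E=\top_E$ for every $f\in S$, is $\rho$-related to $0$; adding $f$ gives $\top_E=f+\top_E\mathrel{\rho}f$, so $f\mathrel{\rho}\top_E\mathrel{\rho}0$ for every $f\in S$. Thus any two elements of $S$ are $\rho$-related, $\rho=\nabla$, and $S$ is congruence-simple, completing the equivalence.
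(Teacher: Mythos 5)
Your converse direction and your treatment of the degenerate case $R^2=0$ are correct: the computation $e_{c,d}\circ f\circ e_{a,b}=e_{a,d}$ versus $e_{c,d}\circ g\circ e_{a,b}=0$, followed by absorption into $e_{0,\top}$, is exactly the standard argument (it is \cite[Proposition~2.3]{zumbr:cofcsswz}, used in this paper through Corollary~3.5(ii)), and the atom argument correctly forces $|R|\le 2$ when the multiplication is zero. The fatal problem is the main case. You never prove density of $S=\lambda(R)$ in $\mathbf{E}_M$ --- you explicitly defer it as ``the technical heart'' --- and with your choice $M:=(R,+,0)$ it is in fact \emph{false}, so no argument can close this gap. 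Concretely, take $R=\mathbf{E}_N$ for the three-element chain $N=\{0<a<\top\}$: by Theorem~3.3 this is a proper finite congruence-simple (indeed simple) semiring with $|R|=6$. If $e_{0,{\rm id}_N}\in\mathbf{E}_{(R,+,0)}$ were equal to some $\lambda_t$, then $t\circ h={\rm id}_N$ for every nonzero $h\in R$; taking $h={\rm id}_N$ forces $t={\rm id}_N$, and then taking $h=e_{0,a}\neq{\rm id}_N$ gives $e_{0,a}={\rm id}_N$, a contradiction. So $\lambda(R)$ is not dense in $\mathbf{E}_{(R,+,0)}$, even though $R$ is trivially a dense subhemiring of $\mathbf{E}_N$. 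The point you are missing is that the semilattice $M$ in the statement is existentially quantified and in general cannot be taken to be the additive reduct of $R$: faithfulness of the regular representation is cheap, but density requires representing $R$ on a semilattice that is a \emph{simple} $R$-semimodule, and $(R,+,0)$ usually is not one.

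This is exactly where the paper's own proof (Corollary~4.9, deduced from Theorem~4.6 together with Remark~4.8) invests its effort. For $RR\neq 0$ it constructs a simple complete left $R$-semimodule $M=R\phi_0$ inside the dual ${\rm CHom}_{\mathbf{B}}(R,\mathbf{B})$ (Proposition~4.5), embeds $R$ into ${\rm CEnd}_{\mathbf{B}}(M)$, and then derives density from the simplicity of $M$ via the annihilator ideals $l_R(x):=\{r\in R\mid rx=0\}$: one shows $x\le y\Leftrightarrow l_R(y)\subseteq l_R(x)$, produces for each $a\neq\infty$ an element $s\in l_R(a)$ with $sx=0$ for $x\le a$ and $sx=\infty$ otherwise (using that every nonzero $R$-homomorphism $l_R(a)\to M$ is surjective by simplicity of $M$), and finally composes with some $r$ satisfying $r\infty=b$ to realize $e_{a,b}$ in the image of $R$. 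Your kernel dichotomy and the zero-multiplication case can be retained, but the main case must be rebuilt around such a simple semimodule rather than around the regular one.
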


\section{Simpleness of Endomorphism Hemirings}

Now let $\mathbf{E}_M$ be the endomorphism hemiring of an idempotent
commutative monoid (a semilattice with zero) $(M,+,0)$, $\mathbf{F}_M
:= \langle e_{a,b} \mid a,b\in M \rangle\subseteq \mathbf{E}_M$ the
submonoid of $(\mathbf{E}_M,+,0)$ generated by all endomorphisms
$e_{a,b}$, $a,b\in M$, and $\mathbf{Fr}_M := \{f\in\mathbf{E}_M \mid
|f(M)| < \infty \}\subseteq \mathbf{E}_M$ the submonoid of
$(\mathbf{E}_M,+)$ consisting of all endomorphisms of the semilattice
$M$ having finite ranges.  It is obvious that $\mathbf{Fr}_M$ is an
ideal of $\mathbf{E}_M$, and $\mathbf{F} _M\subseteq \mathbf{Fr}_M$;
moreover, the following observations are true.

\begin{lemma}
  Let $(M,+,0)$ be an idempotent commutative monoid such that $|\{
  {x\in M} \mid f(x)\le a\}| < \infty$ for any $a\in M$ and
  $f\in\mathbf{E}_M$.  Then, $\mathbf{F}_M$ is an ideal of
  $\mathbf{E}_M$.  In particular, if $M$ is finite, $\mathbf{F}_M$ is
  an ideal of $\mathbf{E}_M$.
\end{lemma}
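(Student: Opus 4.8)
The plan is to exploit the fact that $\mathbf{F}_M$ is \emph{by construction} an additive submonoid of $(\mathbf{E}_M,+,0)$, so that verifying the ideal property reduces entirely to checking two-sided multiplicative absorption. Since composition in $\mathbf{E}_M$ distributes over the pointwise addition, it suffices to prove that $f\circ e_{a,b}\in\mathbf{F}_M$ and $e_{a,b}\circ f\in\mathbf{F}_M$ for every generator $e_{a,b}$ and every $f\in\mathbf{E}_M$; the general case $f\circ\varphi$ and $\varphi\circ f$ for $\varphi=\sum_k e_{a_k,b_k}$ then follows termwise. The left-hand absorption is immediate and uses no hypothesis: by the identity $f\circ e_{a,b}=e_{a,f(b)}$ recorded in the Lemma of Section~2, each $f\circ e_{a_k,b_k}$ is again a generator, whence $f\circ\varphi=\sum_k e_{a_k,f(b_k)}\in\mathbf{F}_M$.

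The right-hand absorption $e_{a,b}\circ f\in\mathbf{F}_M$ is the crux, and here is exactly where the finiteness hypothesis enters. The strategy is to show the stronger statement that $e_{a,b}\circ f$ is in fact a \emph{single} generator. First I would compute $(e_{a,b}\circ f)(x)=e_{a,b}(f(x))$, which equals $0$ when $x\in S$ and $b$ otherwise, where $S:=\{x\in M\mid f(x)\le a\}$ is finite by assumption. The key structural observations about $S$ are two: it is a down-set, because $f$ is order-preserving (from $x\le y$ one gets $f(x)+f(y)=f(x+y)=f(y)$, i.e.\ $f(x)\le f(y)$); and, crucially, it is closed under the monoid addition, since $f(x),f(y)\le a$ give $f(x+y)=f(x)+f(y)\le a$, so $x+y\in S$ (and $0\in S$ as $f(0)=0\le a$).

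The main obstacle, and the only place the hypothesis is used, is to pass from these closure properties to a \emph{greatest} element of $S$. Being finite, nonempty and closed under $+$, the set $S$ contains $c:=\sum_{x\in S}x$, and every $x\in S$ satisfies $x\le c$; since $S$ is a down-set and $c\in S$, this yields $S=\{x\in M\mid x\le c\}$. Comparing values then gives $e_{a,b}\circ f=e_{c,b}\in\mathbf{F}_M$ directly, and summing over $k$ shows $\varphi\circ f=\sum_k e_{c_k,b_k}\in\mathbf{F}_M$. I would stress that the finiteness is essential precisely to force $\max S$ to exist: without it, $S$ could be a join-closed down-set lacking a top element, and $e_{a,b}\circ f$ need not be a finite sum of generators. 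Finally, the ``in particular'' clause is trivial, since for finite $M$ every set $\{x\mid f(x)\le a\}$ is automatically finite, so the hypothesis holds vacuously and $\mathbf{F}_M$ is an ideal.
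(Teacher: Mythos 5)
Your proposal is correct and follows essentially the same route as the paper: left absorption via the identity $f\circ e_{a,b}=e_{a,f(b)}$ from Lemma~2.1, and right absorption by showing $e_{a,b}\circ f=e_{c,b}$ where $c$ is the (idempotent) sum of the finite set $S=\{x\in M\mid f(x)\le a\}$. The only difference is that you spell out what the paper's ``one can easily see'' conceals — that $S$ is an additively closed down-set containing $c$, hence equals $\{x\mid x\le c\}$ — which is a faithful elaboration, not a different argument.
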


\begin{proof}
  By Lemma~2.1, it is enough to show that $\mathbf{F}_M$ is a right
  ideal of $\mathbf{E}_M$.  Indeed, for any $a,b,x\in M$
  and $f\in \mathbf{E}_M$,
  \begin{equation*}
    e_{a,b}\,f(x) := \begin{cases}
      0 &\text{if }f(x)\le a\:,\\
      b &\text{otherwise}\:.
    \end{cases}
  \end{equation*}
  Then, considering the set $\{x_1, x_2, \ldots, x_n\} := \{ x\in M
  \mid f(x)\le a\}$ and the element $ c := x_1 + x_2 + \ldots + x_n$,
  one can easily see that $e_{a,b}\,f = e_{c,b}$.
\end{proof}

\begin{lemma}
  For a subsemiring $S\subseteq \mathbf{E}_M$ of the endomorphism
  semiring $\mathbf{E}_M$ of an idempotent commutative monoid
  $(M,+,0)$, there holds:
  \begin{enumerate}[\ \ (i)\ ]
  \item If $S$ is ideal-simple and $S\cap\mathbf{Fr}_M\ne 0$, then
    $M$ is finite;
  \item If $S$ is ideal-simple and $S\cap\mathbf{F}_M\ne 0$, then $M$
    is a finite distributive lattice.
  \end{enumerate}
\end{lemma}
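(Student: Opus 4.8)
The plan is to exploit that both $\mathbf{Fr}_M$ and, once $M$ is known to be finite, $\mathbf{F}_M$ are ideals of $\mathbf{E}_M$, so that their intersections with $S$ are ideals of $S$; ideal-simpleness then forces $S$ to lie inside these ideals, and the presence of the identity $1=\mathrm{id}_M\in S$ (a subsemiring of $\mathbf{E}_M$ contains its identity) yields the finiteness and, ultimately, the distributivity.

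For (i), I would first observe that $S\cap\mathbf{Fr}_M$ is a two-sided ideal of $S$: it is closed under $+$, and for $s\in S$, $t\in S\cap\mathbf{Fr}_M$ one has $st,ts\in S$ (as $S$ is a subsemiring) and $st,ts\in\mathbf{Fr}_M$ (as $\mathbf{Fr}_M$ is an ideal of $\mathbf{E}_M$), whence $st,ts\in S\cap\mathbf{Fr}_M$. Since this ideal is nonzero and $S$ is ideal-simple, $S\cap\mathbf{Fr}_M=S$, i.e.\ $S\subseteq\mathbf{Fr}_M$; in particular $\mathrm{id}_M=1\in\mathbf{Fr}_M$, so $|M|=|\mathrm{id}_M(M)|<\infty$ and $M$ is finite. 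For (ii), since $\mathbf{F}_M\subseteq\mathbf{Fr}_M$ the hypothesis $S\cap\mathbf{F}_M\ne0$ gives $S\cap\mathbf{Fr}_M\ne0$, so (i) already shows $M$ finite; then Lemma~3.1 guarantees that $\mathbf{F}_M$ is an ideal of $\mathbf{E}_M$, and the identical ideal argument yields $S\subseteq\mathbf{F}_M$, whence $\mathrm{id}_M\in\mathbf{F}_M$.

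It remains to prove that a finite $M$ with $\mathrm{id}_M\in\mathbf{F}_M$ is a distributive lattice, which is the main point. A finite commutative idempotent monoid is a join-semilattice with least element $0$, and being finite it is in fact a lattice, the meet being $a\wedge b:=\sum\{\,z\mid z\le a,\ z\le b\,\}$. Writing $\mathrm{id}_M=\sum_{k=1}^n e_{a_k,b_k}$ and evaluating at $x$ translates, by the definition of the $e_{a,b}$ and of addition in $\mathbf{E}_M$, into the representation
\begin{equation*}
  x=\sum\{\,b_k\mid x\not\le a_k\,\}\qquad\text{for every }x\in M.
\end{equation*}
To establish the nontrivial distributive inequality $x\wedge(y+z)\le(x\wedge y)+(x\wedge z)$, I would set $w:=x\wedge(y+z)$ and expand $w=\sum\{\,b_k\mid w\not\le a_k\,\}$. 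For each index $k$ with $w\not\le a_k$: from $w\le x$ one gets $x\not\le a_k$ (else $w\le x\le a_k$), so $b_k\le x$ as $b_k$ occurs as a summand in the representation of $x$; and from $w\le y+z$ one gets $y+z\not\le a_k$, that is $y\not\le a_k$ or $z\not\le a_k$, so $b_k\le y$ or $b_k\le z$. In either case $b_k\le x\wedge y$ or $b_k\le x\wedge z$, hence $b_k\le(x\wedge y)+(x\wedge z)$; taking the join over all such $k$ gives $w\le(x\wedge y)+(x\wedge z)$. As the reverse inequality holds in any lattice, $M$ is distributive.

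The routine part is the ideal bookkeeping of the first two paragraphs. The crux, and the step I expect to demand the most care, is the final distributivity computation: the key is that $w\le x$ and $w\le y+z$ simultaneously constrain each surviving summand $b_k$, forcing both $b_k\le x$ and $b_k$ below $y$ or below $z$, so that $b_k\le(x\wedge y)+(x\wedge z)$.
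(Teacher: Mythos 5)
Your proof is correct, and while your ideal-theoretic bookkeeping matches the paper's, your handling of distributivity takes a genuinely different route. For part (i) the paper works with the principal ideal $\langle f\rangle$ of $S$ generated by a nonzero $f\in S\cap\mathbf{Fr}_M$: ideal-simpleness gives $\langle f\rangle=S\ni\mathrm{id}_M$, so $\mathrm{id}_M=g_1fh_1+\dots+g_nfh_n\in\mathbf{Fr}_M$ since $\mathbf{Fr}_M$ is an ideal of $\mathbf{E}_M$; your intersection ideal $S\cap\mathbf{Fr}_M$ reaches the same conclusion $\mathrm{id}_M\in\mathbf{Fr}_M$ and is, if anything, slightly cleaner --- this is only a repackaging, and both versions lean on the same two facts, namely $\mathrm{id}_M\in S$ and the ideal property of $\mathbf{Fr}_M$ (resp.\ of $\mathbf{F}_M$, via Lemma~3.1, once $M$ is finite). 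The real divergence is in (ii): having obtained $\mathrm{id}_M\in\mathbf{F}_M$ (equivalently $\mathbf{F}_M=\mathbf{E}_M$, since an ideal of $\mathbf{E}_M$ containing the identity is everything), the paper simply cites Proposition~4.9 and Remark~4.10 of \cite{zumbr:cofcsswz} to conclude that the finite semilattice $M$ is a distributive lattice, whereas you re-prove that implication from scratch: from $\mathrm{id}_M=\sum_{k}e_{a_k,b_k}$ you extract the pointwise representation $x=\sum\{b_k\mid x\not\le a_k\}$ and verify $x\wedge(y+z)\le(x\wedge y)+(x\wedge z)$ directly. Your computation is sound --- the two facts it needs, that $x\not\le a_k$ forces $b_k\le x$ (as $b_k$ is then a summand of $x$) and that $y+z\not\le a_k$ forces $y\not\le a_k$ or $z\not\le a_k$ (join of elements below $a_k$ stays below $a_k$), are exactly right, and the finite-lattice structure of $M$ needed to form meets is correctly secured by part (i). In effect you have inlined a proof of the relevant direction of the cited external result. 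What the paper's route buys is brevity; what yours buys is self-containedness, making the lemma independent of the classification machinery of \cite{zumbr:cofcsswz} at the cost of a short lattice computation.
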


\begin{proof}
  (i).  Consider the ideal $\langle f\rangle$ of $S$ generated by a
  nonzero endomorphism $f\in S\cap \mathbf{Fr}_M$.  Since $S$ is
  ideal-simple and $1_{\mathbf{E}_M} = {\rm id}_M\in S\subseteq
  \mathbf{E}_M$, there are some endomorphisms $g_1, h_1, g_2,
  h_2,\ldots, g_n, h_n\in S$ such that ${\rm id}_M = g_1fh_1 + g_2fh_2
  + \ldots + g_nfh_n$.  Whence and since $f\in\mathbf{Fr}_M$, one has
  ${\rm id}_M\in\mathbf{Fr}_M$ and $|M|<\infty$.

  (ii).  As $\mathbf{F}_M\subseteq\mathbf{Fr}_M$, from (i) follows
  that $ |M|<\infty $.  Again, using the ideal-simpleness of $S$, one
  has that the ideal $\langle f\rangle$ of $S$ generated by a nonzero
  endomorphism $f\in S\cap \mathbf{F} _M$ contains ${\rm id}_M$, and
  therefore, by Lemma~3.1, ${\mathbf{F}_M = \mathbf{E}_M}$.  Then, using
  \cite[Proposition 4.9 and Remark~4.10]{zumbr:cofcsswz}, one
  concludes that the semilattice $M$ is, in fact, a finite
  distributive lattice.
\end{proof}

The next result describes the simple endomorphism semirings\ of idempotent
commutative monoids.

\begin{theorem}
  The following conditions for the endomorphism semiring
  $\mathbf{E}_M$ of an idempotent commutative monoid $(M,+,0)$ are
  equivalent:
  \begin{enumerate}[\ \ (i)\ ]
  \item $\mathbf{E}_M$ is simple;
  \item $\mathbf{E}_M$ is ideal-simple;
  \item The semilattice $M$ is a finite distributive lattice.
  \end{enumerate}
\end{theorem}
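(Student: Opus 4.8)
The plan is to establish the cycle of implications (i) $\Rightarrow$ (ii) $\Rightarrow$ (iii) $\Rightarrow$ (i). The first implication is immediate, since a simple hemiring is ideal-simple by definition.

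For (ii) $\Rightarrow$ (iii), I would apply Lemma~3.2(ii) directly with $S = \mathbf{E}_M$; the only point to verify is the hypothesis $\mathbf{E}_M \cap \mathbf{F}_M \ne 0$. Assuming $|M| \ge 2$ (the case $|M|=1$ being the trivial one-element distributive lattice), pick any $b \ne 0$: then $e_{0,b} \in \mathbf{F}_M \subseteq \mathbf{E}_M$ and $e_{0,b}(b) = b \ne 0$, so $e_{0,b}$ is a nonzero element of $\mathbf{E}_M \cap \mathbf{F}_M$. Lemma~3.2(ii) then forces $M$ to be a finite distributive lattice.

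The substantive direction is (iii) $\Rightarrow$ (i), where I assume $M$ is a finite distributive lattice and must prove both ideal- and congruence-simpleness. The crucial structural input is that for a finite distributive lattice one has $\mathbf{F}_M = \mathbf{E}_M$, which comes from \cite[Proposition~4.9 and Remark~4.10]{zumbr:cofcsswz} (the same results invoked in Lemma~3.2). For ideal-simpleness, take a nonzero $f \in \mathbf{E}_M$ and choose $b$ with $f(b) \ne 0$; then $f(b) \not\le 0$, so Lemma~2.1 gives $e_{0,d} \circ f \circ e_{a,b} = e_{a,d}$ for all $a,d \in M$. Hence the ideal $\langle f\rangle$ contains every $e_{a,d}$, thus all of $\mathbf{F}_M = \mathbf{E}_M$, whence $\langle f\rangle = \mathbf{E}_M$. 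For congruence-simpleness, I note that $\mathbf{E}_M$ is finite (as $M$ is), proper (being additively idempotent with more than one element, it cannot admit additive inverses, hence is not a ring), and dense as a subhemiring of itself (it contains all $e_{a,b}$); Theorem~2.2 then yields congruence-simpleness. Together these give simpleness, closing the cycle.

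I expect the main obstacle to lie not in any routine computation but in the reliance on the external fact $\mathbf{F}_M = \mathbf{E}_M$ for finite distributive lattices, since everything else reduces to applying Lemma~2.1 and Theorem~2.2. One should also handle the degenerate one-element case and confirm that viewing $\mathbf{E}_M$ as a dense subhemiring of itself is the correct way to invoke Theorem~2.2.
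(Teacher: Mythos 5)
Your proposal is correct and follows essentially the same route as the paper: the trivial implication (i)~$\Rightarrow$~(ii), Lemma~3.2 for (ii)~$\Rightarrow$~(iii), and for (iii)~$\Rightarrow$~(i) the external fact $\mathbf{F}_M=\mathbf{E}_M$ for finite distributive lattices combined with Theorem~2.2 (density of $\mathbf{E}_M$ in itself) for congruence-simpleness and the Lemma~2.1 computation $e_{0,d}\,f\,e_{a,b}=e_{a,d}$ for ideal-simpleness. Your only additions are routine verifications (nonzeroness of $\mathbf{F}_M$, properness, finiteness) that the paper leaves implicit.
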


\begin{proof}
  (i) $\Longrightarrow$ (ii).  It is obvious.

  (ii) $\Longrightarrow$ (iii) follows from Lemma~3.2.

  (iii) $\Longrightarrow$ (i).  For $M$ is a finite distributive
  lattice, by \cite[Proposition 4.9 and Remark~4.10]{zumbr:cofcsswz},
  $\mathbf{F}_M = \mathbf{E}_M$, and therefore, by Theorem~2.2,
  $\mathbf{E}_M$ is a congruence-simple semiring.  So, we need only to
  show that $\mathbf{E}_M$ is also ideal-simple.  Indeed, let
  $I\subseteq\mathbf{E}_M$ be a nonzero ideal of $\mathbf{E}_M$,
  $0\neq f\in I$, and $f(m)\neq 0$ for some $m\in M$.  Then, by
  Lemma~2.1, $e_{a,b} = e_{0,b}\,f\,e_{a,m}\in I$ for any $a,b\in M$,
  and hence, $\mathbf{E}_M = \mathbf{F}_M\subseteq I$, \textit{i.e.},
  $\mathbf{E}_M=I$.
\end{proof}

Since the simpleness of additively idempotent semirings is one of our
central interests in this paper, the following facts about additively
idempotent semirings should be mentioned: By \cite[Proposition
3.4]{gw:oeics} (or \cite[Proposition 23.5]{golan:sata}), an additively
idempotent semiring $R$ can be embedded in a finitary complete
semiring~$S$ which additive reduct $(S,+,0)$ is a complete semimodule
over the Boolean semifield $\mathbf{B} = \{0,1\}$, \textit{i.e}., with
the semilattice $(S,+,0)$ being a complete lattice (see, for example,
\cite[Proposition 23.2]{golan:sata}); And there exists the natural
complete semiring injection from $S$ to the complete endomorphism
semiring ${\rm CEnd}_{\mathbf{B}}(S)$.  Thus, it is clear that an
additively idempotent semiring $R$ can be considered as a subsemiring
of a complete endomorphism semiring of a complete lattice; and
therefore, it is quite natural that our next results concern the
simpleness of the complete endomorphism semiring ${\rm
  CEnd}_{\mathbf{B}}(M)$ of a complete lattice $M$.

So, let $M$ be a complete lattice, which by \cite[Proposition 23.2]
{golan:sata}, for example, can be treated as a complete semimodule
over the Boolean semifield $\mathbf{B} = \{0,1\}$.  The following
observations are proved to be useful.\medskip

\begin{lemma}
  Let $M$ be a complete lattice.  Then the following statements hold:
  \begin{enumerate}[\ \ (i)\ ]
  \item $e_{a,b}\in {\rm CEnd}_{\mathbf{B}}(M)$ for all $a,b\in M$;
  \item for any $a,b\in M$ and $f\in {\rm CEnd}_{\mathbf{B}}(M)$,
    there exists an element $c\in M$ such that $e_{a,b}\, f = e_{c,b}$\,.
  \end{enumerate}
\end{lemma}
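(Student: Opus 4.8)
The plan is to work throughout in the semilattice $(M,+,0)$ attached to the complete lattice, where the addition $+$ is the lattice join $\vee$ and $0$ is the bottom element, so that $x+a=a$ is equivalent to $x\le a$ and the infinite sums $\sum_{i\in\Omega}m_i$ demanded by completeness are exactly the suprema $\bigvee_{i\in\Omega}m_i$. Under this identification a complete $\mathbf{B}$-endomorphism of $M$ is precisely a self-map that preserves arbitrary joins: the scalar conditions over $\mathbf{B}$ are vacuous, and preservation of all sums (including the empty one, giving $g(0)=0$) is the same as preservation of all joins. I will use repeatedly that every such $g$ is monotone, since $x\le y$ forces $g(y)=g(x\vee y)=g(x)\vee g(y)$, whence $g(x)\le g(y)$.

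For (i) I would check directly that $e_{a,b}$ preserves arbitrary joins. Fix a family $\{x_i\mid i\in\Omega\}$ and put $x=\bigvee_i x_i$, splitting according to whether $x\le a$. If $x\le a$, then each $x_i\le x\le a$, so every $e_{a,b}(x_i)=0$ and both $e_{a,b}(x)$ and $\bigvee_i e_{a,b}(x_i)$ equal $0$. If $x\not\le a$, then, as $x$ is the \emph{least} upper bound, the $x_i$ cannot all lie below $a$, so some $x_j\not\le a$ yields $e_{a,b}(x_j)=b$; since every value $e_{a,b}(x_i)$ lies in $\{0,b\}\subseteq\{m\mid m\le b\}$, the join $\bigvee_i e_{a,b}(x_i)$ is squeezed to equal $b=e_{a,b}(x)$. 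This gives $e_{a,b}\in{\rm CEnd}_{\mathbf{B}}(M)$.

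For (ii), recall that $e_{a,b}\,f$ denotes the composite $e_{a,b}\circ f$, so $(e_{a,b}\,f)(x)=0$ if $f(x)\le a$ and $=b$ otherwise; it therefore suffices to produce $c$ with $\{x\mid f(x)\le a\}=\{x\mid x\le c\}$, since this immediately forces $e_{a,b}\,f=e_{c,b}$. Writing $S:=\{x\in M\mid f(x)\le a\}$, the natural candidate is $c:=\bigvee S$, generalizing the finite element $x_1+\dots+x_n$ used in the proof of Lemma~3.1. The \emph{crucial step}, and essentially the only place the completeness of $f$ is used, is to verify $c\in S$: since $f$ preserves arbitrary joins, $f(c)=\bigvee_{x\in S}f(x)$, and every term satisfies $f(x)\le a$, whence $f(c)\le a$. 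Granting this, $S\subseteq\{x\mid x\le c\}$ holds by definition of the supremum, while the reverse inclusion follows from the monotonicity of $f$, as $x\le c$ gives $f(x)\le f(c)\le a$. Thus the two principal down-sets coincide and $e_{a,b}\,f=e_{c,b}$. I expect the one genuinely delicate point to be exactly this identification of completeness of $f$ with join-preservation, through which the preimage down-set $S$ is closed under arbitrary joins and hence principal; once that is in place, both parts reduce to routine monotonicity and case-checking.
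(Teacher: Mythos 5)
Your proof is correct and follows essentially the same route as the paper: part (i) rests on the equivalence $\bigvee_{i}m_i\le a \Leftrightarrow \forall i\, (m_i\le a)$ (which you unpack into two cases), and part (ii) takes $c$ to be the join of the preimage set $\{x\mid f(x)\le a\}$, uses completeness of $f$ to show $c$ itself lies in that set, and concludes $e_{a,b}\,f=e_{c,b}$ via monotonicity, exactly as in the paper.
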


\begin{proof}
  (i).  Let $a,b\in M$ and $\{m_i\mid i\in\Omega\}$ be a family of
  elements of $M$.  It is easy to see that
  \begin{equation*}
    \bigvee_{i\in \Omega }m_{i}\le a\ \Longleftrightarrow\ \forall i\in \Omega: 
    m_{i}\le a \:.
  \end{equation*}
  Then, $e_{a,b}(\bigvee_{i\in\Omega}m_i) = \bigvee_{i\in \Omega}
  e_{a,b}(m_{i})$ and, hence, $e_{a,b}\in {\rm CEnd}_{\mathbf{B}}(M)$.

  (ii).  For any $a,b\in M$ and $f\in{\rm CEnd}_{\mathbf{B}}(M)$, we have 
  \begin{equation*}
    e_{a,b}\,f(x) := 
    \begin{cases}
      0 &\text{if }f(x)\le a\:,\\
      b &\text{otherwise}\:.
    \end{cases}
  \end{equation*}
  Let $X := \{x\in M\mid f(x)\le a\}$ and $c = \bigvee_{x\in X}x$.
  Then, $f(c) = f(\bigvee_{x\in X}x) = \bigvee_{x\in X}f(x)\le a$;
  hence, $c\in X$ and $x\le c\Leftrightarrow f(x)\le a$, and it is
  clear that $e_{a,b}\,f = e_{c,b}$.
\end{proof}

\begin{corollary}
  For any complete lattice $M$, the following statements hold:
  \begin{enumerate}[\ \ (i)\ ]
  \item $\mathbf{F}_M$ is an ideal in ${\rm CEnd}_{\mathbf{B}}(M)$;
  \item Any dense subhemiring of ${\rm CEnd}_{\mathbf{B}}(M)$ is
    congruence-simple.  In particular, ${\rm CEnd}_{\mathbf{B}}(M)$ is
    a congruence-simple semiring.
  \end{enumerate}
\end{corollary}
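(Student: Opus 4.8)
The plan is to treat the two parts separately, in both cases reducing statements about arbitrary elements to the generators $e_{a,b}$ and then invoking Lemma~3.4 and Lemma~2.1. For part~(i), since $\mathbf{F}_M$ is by definition the additive submonoid generated by the $e_{a,b}$, and composition distributes over the pointwise addition (left distributivity using that $f$ is additive, right distributivity being automatic), it suffices to check that $\mathbf{F}_M$ absorbs multiplication by an arbitrary $f\in{\rm CEnd}_{\mathbf{B}}(M)$ on each generator from either side. On the left, Lemma~2.1 gives $f\,e_{a,b}=e_{a,f(b)}\in\mathbf{F}_M$; on the right, Lemma~3.4(ii) gives $e_{a,b}\,f=e_{c,b}\in\mathbf{F}_M$ for a suitable $c\in M$. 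Extending additively, $f\phi=\sum_k e_{a_k,f(b_k)}$ and $\phi f=\sum_k e_{c_k,b_k}$ lie in $\mathbf{F}_M$ for every $\phi=\sum_k e_{a_k,b_k}\in\mathbf{F}_M$, so $\mathbf{F}_M$ is a two-sided ideal.

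For part~(ii), let $S$ be a dense subhemiring, so that $\mathbf{F}_M\subseteq S$ by density, and let $\rho$ be a congruence on $S$ with $\rho\neq\vartriangle_S$. First I would extract a witness: choose $(f,g)\in\rho$ with $f\neq g$, pick $m\in M$ with $f(m)\neq g(m)$, and after possibly interchanging $f$ and $g$ assume $f(m)\not\le g(m)$. Sandwiching the related pair (all factors belonging to $S$ by density), for arbitrary $a,d\in M$ we have $e_{g(m),d}\,f\,e_{a,m}\mathrel\rho e_{g(m),d}\,g\,e_{a,m}$; by Lemma~2.1 the left-hand side equals $e_{a,d}$ (since $f(m)\not\le g(m)$) while the right-hand side equals $0$ (since $g(m)\le g(m)$). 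Hence $e_{a,d}\mathrel\rho 0$ for all $a,d\in M$, and therefore $\mathbf{F}_M$ lies entirely in the $\rho$-class $[0]_\rho$ of $0$.

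The hard part will be passing from $\mathbf{F}_M\subseteq[0]_\rho$ to $\rho=S^2$: when $M$ is infinite, $\mathbf{F}_M$ (consisting only of the finite sums of the $e_{a,b}$) is in general a proper subset of $S$, and neither multiplying nor finitely summing the relations $e_{a,d}\mathrel\rho 0$ ever escapes $\mathbf{F}_M$. This is precisely where the completeness of $M$ must enter. The key observation I would use is that the complete lattice $M$ has a top element $\top:=\bigvee M$, and that $e_{0,\top}\in\mathbf{F}_M$ is the largest element of ${\rm CEnd}_{\mathbf{B}}(M)$: indeed every $s\in S$ satisfies $s(0)=0=e_{0,\top}(0)$ and $s(x)\le\top=e_{0,\top}(x)$ for $x\neq 0$, so $s\le e_{0,\top}$, i.e. $s+e_{0,\top}=e_{0,\top}$. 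Since $e_{0,\top}\mathrel\rho 0$, adding $s$ to both sides gives $s=s+0\mathrel\rho s+e_{0,\top}=e_{0,\top}\mathrel\rho 0$, whence $s\mathrel\rho 0$ for every $s\in S$. Thus $[0]_\rho=S$, so $\rho=S^2$ and $S$ is congruence-simple. The ``in particular'' statement follows at once, since ${\rm CEnd}_{\mathbf{B}}(M)$ contains all $e_{a,b}$ by Lemma~3.4(i) and is therefore a dense subhemiring of itself.
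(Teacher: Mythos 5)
Your proof is correct, and part (i) is exactly the paper's argument: the paper dispatches it by citing Lemmas~2.1 and~3.4\,(ii), which is precisely your left/right absorption on the generators $e_{a,b}$ together with two-sided distributivity of composition over pointwise sums. The genuine difference is in part (ii): the paper does not argue directly at all, but merely observes that a dense subhemiring of ${\rm CEnd}_{\mathbf{B}}(M)$ is also a dense subhemiring of $\mathbf{E}_M$ and that the complete lattice $M$ has a join-absorbing element $\infty := \bigvee_{x\in M}x$, and then invokes Proposition~2.3 of \cite{zumbr:cofcsswz}, which asserts congruence-simpleness of dense subhemirings of $\mathbf{E}_M$ whenever the semilattice $M$ has such an absorbing element. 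Your argument is in effect a self-contained proof of that cited proposition, specialized to the present setting: the sandwich $e_{g(m),d}\,f\,e_{a,m}=e_{a,d}$ versus $e_{g(m),d}\,g\,e_{a,m}=0$ (valid by Lemma~2.1 once one normalizes to $f(m)\not\le g(m)$) forces all $e_{a,d}$ into the congruence class of $0$, and the top element -- your $e_{0,\top}$, the paper's $\infty$ -- is exactly what lets one absorb an arbitrary $s\in S$ via $s+e_{0,\top}=e_{0,\top}$ and conclude $\rho=S^2$. Your version also makes visible where completeness of $M$ actually enters (existence of $\top$, plus Lemma~3.4\,(i) to see that ${\rm CEnd}_{\mathbf{B}}(M)$ is dense in itself), which the citation hides. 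In short: same mathematical content, with the paper buying brevity from an external reference and your proof buying self-containment at the cost of reproving it.
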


\begin{proof}
  (i). Since ${\rm CEnd}_{\mathbf{B}}(M)\subseteq \mathbf{E}_M$, the
  statement immediately follows from Lemmas 2.1 and 3.4\,(ii).

  (ii). For any dense subhemiring of ${\rm CEnd}_{\mathbf{B}}(M)$ is
  also a dense subsemiring of $\mathbf{E}_M$ and the complete lattice
  $M$ obviously contains the join-absorbing element $\infty :=
  \bigvee_{x\in M}x$, the statement right away follows from
  \cite[Proposition 2.3]{zumbr:cofcsswz}.
\end{proof}

The following result is a ``complete'' analog of Theorem~3.3 and
describes the simple complete endomorphism semirings of complete
lattices.

\begin{theorem}
  For any complete lattice $M$, the following are equivalent:
  \begin{enumerate}[\ \ (i)\ ]
  \item ${\rm CEnd}_{\mathbf{B}}(M)$ is simple;
  \item ${\rm CEnd}_{\mathbf{B}}(M)$ is ideal-simple;
  \item $M$ is a finite distributive lattice.
  \end{enumerate}
\end{theorem}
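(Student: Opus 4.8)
The plan is to run the cycle (i) $\Rightarrow$ (ii) $\Rightarrow$ (iii) $\Rightarrow$ (i), leaning on the preparatory facts about $\mathbf{F}_M$ and on Theorem~3.3. The implication (i) $\Rightarrow$ (ii) is immediate, since ``simple'' means ``ideal- and congruence-simple'' by definition.

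For the key implication (ii) $\Rightarrow$ (iii), I would regard ${\rm CEnd}_{\mathbf{B}}(M)$ as a subsemiring $S\subseteq\mathbf{E}_M$ and invoke Lemma~3.2(ii). To do so I must check that $S$ meets $\mathbf{F}_M$ nontrivially: but Lemma~3.4(i) shows $e_{a,b}\in{\rm CEnd}_{\mathbf{B}}(M)$ for all $a,b\in M$, so in fact $\mathbf{F}_M\subseteq S$ and hence $S\cap\mathbf{F}_M=\mathbf{F}_M\ne 0$ whenever $M$ is nontrivial. With $S$ ideal-simple by hypothesis, Lemma~3.2(ii) then forces $M$ to be a finite distributive lattice. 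I expect this to be the only step carrying any real content, and the one point to get right is precisely that $\mathbf{F}_M$ lands inside the \emph{complete} endomorphism semiring -- guaranteed by Lemma~3.4(i) -- so that Lemma~3.2 may be applied verbatim with $S={\rm CEnd}_{\mathbf{B}}(M)$.

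For (iii) $\Rightarrow$ (i), I would first observe that when $M$ is finite every (possibly infinite) join collapses to a finite one, so every semilattice endomorphism of $M$ automatically preserves arbitrary joins; thus ${\rm CEnd}_{\mathbf{B}}(M)=\mathbf{E}_M$. Since $M$ is a finite distributive lattice, Theorem~3.3 then gives that $\mathbf{E}_M$ is simple, whence ${\rm CEnd}_{\mathbf{B}}(M)$ is simple. Alternatively, one may argue directly without the finite-case identification: congruence-simpleness is automatic from Corollary~3.5(ii), while for ideal-simpleness one takes a nonzero ideal $I$, picks $0\ne f\in I$ with $f(m)\ne 0$ for some $m$, and uses Lemmas~2.1 and 3.4 to get $e_{a,b}=e_{0,b}\,f\,e_{a,m}\in I$ for all $a,b\in M$, so that $\mathbf{F}_M\subseteq I$; then $\mathbf{F}_M=\mathbf{E}_M={\rm CEnd}_{\mathbf{B}}(M)$ forces $I$ to be the whole semiring.
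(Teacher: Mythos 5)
Your proof is correct, and its skeleton coincides with the paper's: the implication (i) $\Rightarrow$ (ii) is definitional, and your (ii) $\Rightarrow$ (iii) is exactly the paper's argument -- Lemma~3.4\,(i) (equivalently Corollary~3.5\,(i)) gives $\mathbf{F}_M\subseteq {\rm CEnd}_{\mathbf{B}}(M)$, and then Lemma~3.2\,(ii) applies. Where you genuinely diverge is (iii) $\Rightarrow$ (i). The paper gets the identification ${\rm CEnd}_{\mathbf{B}}(M)=\mathbf{E}_M$ by an ideal-theoretic detour: Theorem~3.3 makes $\mathbf{E}_M$ simple, Lemma~3.1 together with $e_{0,m}\neq 0$ makes $\mathbf{F}_M$ a nonzero ideal of $\mathbf{E}_M$, ideal-simpleness then forces $\mathbf{E}_M=\mathbf{F}_M$, and the sandwich $\mathbf{F}_M\subseteq{\rm CEnd}_{\mathbf{B}}(M)\subseteq\mathbf{E}_M$ collapses. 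You instead observe directly that finiteness of $M$ makes every semilattice endomorphism automatically complete: any family in a finite lattice has only finitely many distinct values, so its supremum is a finite join, which every endomorphism of $(M,+,0)$ preserves; hence ${\rm CEnd}_{\mathbf{B}}(M)=\mathbf{E}_M$ outright, and Theorem~3.3 finishes. Your route is more elementary and arguably cleaner, since it isolates the purely order-theoretic fact that completeness of endomorphisms is vacuous over a finite lattice, rather than extracting the equality from simpleness. One small caveat: your ``alternative'' argument at the end quietly asserts $\mathbf{F}_M=\mathbf{E}_M$, which is not free -- it is precisely what the paper derives from Theorem~3.3 plus Lemma~3.1 (or from the cited Proposition~4.9 and Remark~4.10 of the Zumbr\"agel paper, valid for finite distributive lattices) -- so if you kept only that variant you would need to supply this step; but since your primary argument for (iii) $\Rightarrow$ (i) is complete, this does not affect correctness.
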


\begin{proof}
  (i) $\Longrightarrow$ (ii).  It is obvious.

  (ii) $\Longrightarrow$ (iii).  For ${\rm CEnd}_{\mathbf{B}}(M)$ is a
  subsemiring of $\mathbf{E}_M$, by Corollary 3.5\,(i) (or Lemma
  3.4\,(i)) $\mathbf{F}_M\subseteq {\rm CEnd}_{\mathbf{B}}(M)$, and
  hence, by Lemma~3.2\,(ii) $M$ is a finite distributive lattice.

  (iii) $\Longrightarrow$ (i).  If $M$ is a finite nonzero
  distributive lattice, by Theorem~3.3 $\mathbf{E}_M$ is a simple
  semiring.  Since $0 \neq e_{0,m}\in \mathbf{F}_M$ for all nonzero
  $m\in M$, by Lemma~3.1 $\mathbf{F}_M$ is a nonzero ideal of
  $\mathbf{E}_M$ and, hence, $\mathbf{E}_M=\mathbf{F}_M$.  Then, using
  Corollary 3.5\,(i) and ${\rm CEnd}_{\mathbf{B}}(M)\subseteq
  \mathbf{E}_M = \mathbf{F}_M\subseteq {\rm CEnd}_{\mathbf{B}}(M)$ one
  obtains that ${\rm CEnd}_{\mathbf{B}}(M) = \mathbf{E}_M$ is simple.
\end{proof}

\begin{examples}
  Congruence-simpleness of semirings does not imply their
  ideal-simpleness.  Indeed, the following examples of infinite and
  finite semirings illustrate this situation:\smallskip

  a) Let $P(X)$ be the distributive complete lattice of all subsets of
  an infinite set $X$, and ${\rm CEnd}_{\mathbf{B}}
  ({\mathcal{P}(X)})$ the complete endomorphism semiring of the
  compete lattice $(P(X),\cup)$.  Then, by Theorem~3.6 the infinite
  semiring ${\rm CEnd}_{\mathbf{B}} ({\mathcal{P}(X)})$ is not
  ideal-simple, however, by Corollary 3.5 (2) ${\rm CEnd}_{\mathbf{B}}
  ({\mathcal{ P}(X)})$ is congruence-simple.\smallskip

  b) Let $\mathbf{E}_M$ be the endomorphism semiring of a finite
  semilattice $(M,+,0)$ that is not a distributive lattice.  Then, by
  Theorems 2.2 and 3.3, the finite semiring $\mathbf{E}_M$ is an
  example of a congruence-simple, but not ideal-simple, finite
  semiring.\smallskip
  
  c) Obviously, the semifield $\mathbb{R}^+$of all nonnegative real
  numbers is an example of an ideal-simple, but not congruence-simple
  (all positive real numbers constitute a congruence class of the
  nontrivial congruence), semiring.
\end{examples}

One can clearly observe that the category of semilattices with zero
coincides with the category $_{\mathbf{B}}\mathcal{M}$ of semimodules
over the Boolean semifield $\mathbf{B}$.  Then, in light of this
observation and Examples 3.7, the following result, describing
projective $\mathbf{B}$-semimodules with simple endomorphism
semirings, is of interest.

\begin{corollary}
  For a projective $\mathbf{B}$-semimodule $M\in
  |_{\mathbf{B}}\mathcal{M}|$, the following conditions are
  equivalent:
  \begin{enumerate}[\ \ (i)\ ]
  \item $\mathbf{E}_M$ is ideal-simple;
  \item $\mathbf{E}_M$ is simple;
  \item $\mathbf{E}_M$ is congruence-simple;
  \item the semilattice $M$ is a finite distributive lattice.
  \end{enumerate}
\end{corollary}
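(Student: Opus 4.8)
The plan is to reduce the entire statement to a single implication. Theorem~3.3 already gives, for an arbitrary idempotent commutative monoid, the equivalence of simpleness, ideal-simpleness and ``$M$ is a finite distributive lattice''; in the labelling above this is exactly (i) $\Leftrightarrow$ (ii) $\Leftrightarrow$ (iv). Moreover (ii) $\Rightarrow$ (iii) is trivial, since a simple semiring is by definition congruence-simple. Hence all four conditions will be equivalent as soon as I prove the one implication $(\mathrm{iii})\Rightarrow(\mathrm{iv})$, and this is the only place where the projectivity of $M$ enters. That projectivity is indispensable is clear from Examples~3.7(b): by Theorem~2.2 \emph{every} finite semilattice has a congruence-simple endomorphism semiring, so without a hypothesis on $M$ the implication $(\mathrm{iii})\Rightarrow(\mathrm{iv})$ is false.

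For $(\mathrm{iii})\Rightarrow(\mathrm{iv})$ I would first prove that $M$ is \emph{finite}, using the congruence attached to the ideal $\mathbf{Fr}_M$ of finite-range endomorphisms (recall that $\mathbf{Fr}_M$ is an ideal of $\mathbf{E}_M$). Define $f\sim g$ iff $f+i=g+j$ for some $i,j\in\mathbf{Fr}_M$; this is a semiring congruence on $\mathbf{E}_M$, and it is never diagonal once $M\ne 0$, because $0\sim g$ for every $g\in\mathbf{Fr}_M$ (take $i=g$, $j=0$) while $\mathbf{Fr}_M\ne 0$ (it contains the endomorphisms $e_{0,b}$). Congruence-simpleness of $\mathbf{E}_M$ therefore forces $\sim$ to be universal, and in particular $\mathrm{id}_M\sim 0$; that is, $\mathrm{id}_M+i=j\in\mathbf{Fr}_M$ for some finite-range $i$. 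Since $(\mathrm{id}_M+i)(x)=x\vee i(x)\ge x$ and $\mathrm{id}_M+i$ has finite range $\{r_1,\dots,r_m\}$, every $x\in M$ lies below some $r_l$, so $M=\bigcup_{l=1}^m{\downarrow}r_l$ is a finite union of principal down-sets. Here projectivity finishes the job: writing $M$ as a retract of a free $\mathbf{B}$-semimodule $F$ --- the semilattice of finite subsets of a set $I$ under union --- with section $\iota\colon M\to F$ and retraction $\pi\colon F\to M$, the map $\iota$ is an order-embedding, so $\iota({\downarrow}r_l)\subseteq\{S\in F\mid S\subseteq\iota(r_l)\}$, a \emph{finite} set; hence $\iota(M)$, and therefore $M$, is finite.

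It remains to show that a \emph{finite} projective $M$ is distributive. Being finite, $M$ is finitely generated, hence a retract of a finite free semimodule, that is, of a finite Boolean lattice $2^{I_0}$, via $\iota\colon M\to 2^{I_0}$ and $\pi\colon 2^{I_0}\to M$ with $\pi\iota=\mathrm{id}_M$; both maps preserve joins and are therefore monotone. To see that $M$ is distributive it suffices to check that every join-irreducible $j\in M$ is join-prime. If $j\le a\vee b$ in $M$, then $\iota(j)\subseteq\iota(a)\cup\iota(b)$ in $2^{I_0}$, so $\iota(j)=(\iota(j)\cap\iota(a))\cup(\iota(j)\cap\iota(b))$; applying $\pi$ gives $j=\pi(\iota(j)\cap\iota(a))\vee\pi(\iota(j)\cap\iota(b))$, and join-irreducibility of $j$ forces, say, $j=\pi(\iota(j)\cap\iota(a))\le\pi(\iota(a))=a$ by monotonicity. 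Thus join-irreducibles are join-prime, $M$ is a finite distributive lattice, and $(\mathrm{iii})\Rightarrow(\mathrm{iv})$ follows. The main obstacle is the finiteness step: a proper nonzero ideal does not by itself yield a nontrivial congruence (cf. Examples~3.7), so the decisive point is to exclude $\mathrm{id}_M\sim 0$. This genuinely fails for infinite semilattices with a top element $\infty$ --- there $\mathrm{id}_M+e_{0,\infty}=e_{0,\infty}\in\mathbf{Fr}_M$ collapses the congruence --- and it is precisely projectivity, through the order-embedding into a finite-support free semimodule, that forbids $M$ from being covered by finitely many principal down-sets and hence from possessing such a top element.
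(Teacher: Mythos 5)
Your proposal is correct, and its skeleton---reducing everything to (iii) $\Rightarrow$ (iv) via Theorem~3.3 and the definition of simpleness, then producing a nontrivial congruence on $\mathbf{E}_M$ and exploiting congruence-simpleness---matches the paper's. The execution of (iii) $\Rightarrow$ (iv) genuinely differs, though, in two respects. The paper uses the congruence $\tau$ defined by $f\,\tau\,g$ iff there exists $a\in M$ with $f(x)+a=g(x)+a$ for all $x\in M$; its nontriviality comes from $(e_{0,m},0)\in\tau$, and its universality yields a top element of $M$ outright, after which the paper simply cites Horn--Kimura's characterization (a $\mathbf{B}$-semimodule is projective iff it is a distributive lattice all of whose principal down-sets are finite) to conclude in one stroke that $M$ is finite and distributive. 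You instead take the Bourne congruence of the ideal $\mathbf{Fr}_M$, whose universality gives the slightly weaker conclusion that $M$ is a finite union of principal down-sets, and then---rather than invoking Horn--Kimura---you re-derive by hand exactly the two consequences of projectivity that are needed: the order-embedding of $M$ into the free $\mathbf{B}$-semimodule of finite subsets makes each principal down-set, hence $M$, finite; and realizing the finite $M$ as a retract of a finite Boolean lattice yields distributivity via the join-prime criterion. All of your steps check out: the Bourne relation of the two-sided ideal $\mathbf{Fr}_M$ is indeed a semiring congruence, your extraction of $\mathrm{id}_M+i\in\mathbf{Fr}_M$ is correct, and the lattice-theoretic fact you use (a finite lattice whose join-irreducible elements are all join-prime is distributive) is standard. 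What the paper's route buys is brevity, by outsourcing everything about projectivity to the cited theorem of Horn and Kimura; what yours buys is self-containedness---it needs only the definition of projectivity as ``retract of a free semimodule''---at the cost of silently re-proving the relevant portion of that theorem.
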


\begin{proof}
  First, as was shown in \cite[Theorem~5.3]{horn-kim:cos}, a
  semilattice $M\in |_{\mathbf{B}}\mathcal{M}|$ is projective iff $M$
  is a distributive lattice such that $|\{x\in M \mid x\le a\}| <
  \infty$ for any $a\in M$.  Using this fact and Theorem~3.3, we need
  only to demonstrate the implication (iii) $\Longrightarrow$ (iv).

  Indeed, consider the congruence $\tau$ on $\mathbf{E}_M$ defined
  for ${f,g\in\mathbf{E}_M}$ by
  \begin{equation*}
    f\,\tau\, g \ \Longleftrightarrow\ \exists\, a\in M\ 
    \forall\, x\in M:\ f(x)+a=g(x)+a \:. 
  \end{equation*}
  If $|M|\ge 2$, one easily sees that there exists $m\in M$ such that
  $0\ne e_{0,m}\in \mathbf{E}_M$ and $(e_{0,m},0)\in \tau $.  From
  the latter and the congruence-simpleness of the semiring
  $\mathbf{E}_M$, one has that $\tau = \mathbf{E}_M\times
  \mathbf{E}_M$, and hence, ${\rm id}_M\,\tau\,0$, \textit{i.e.}, there
  exists an element $a\in M$ such that ${\rm id}_M(x)+a=a$ for any $x\in
  M$, \textit{i.e.}, $x\le a$ for any $x\in M$.  For $M$ is a
  projective $\mathbf{B}$-semimodule, from the fact mentioned above
  one gets that $|M|<\infty$.
\end{proof}

Taking into consideration Lemma~3.4\,(i) and repeating the proof of
Corollary~3.8 verbatim, we immediately get the following ``complete''
analog of the latter.

\begin{corollary}
  For a complete, projective $\mathbf{B}$-semimodule $M\in
  |_{\mathbf{B}}\mathcal{M}|$, the following conditions are
  equivalent:
  \begin{enumerate}[\ \ (i)\ ]
  \item ${\rm CEnd}_{\mathbf{B}}(M)$ is ideal-simple;
  \item ${\rm CEnd}_{\mathbf{B}}(M)$ is simple;
  \item ${\rm CEnd}_{\mathbf{B}}(M)$ is congruence-simple;
  \item $M$ is a finite distributive lattice.
  \end{enumerate}
\end{corollary}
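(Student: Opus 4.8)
The plan is to transplant the proof of Corollary~3.8 to the complete setting, replacing $\mathbf{E}_M$ by ${\rm CEnd}_{\mathbf{B}}(M)$ throughout and invoking Lemma~3.4\,(i) at the one point where completeness of the auxiliary endomorphisms matters. Since a complete $\mathbf{B}$-semimodule is in particular a complete lattice, Theorem~3.6 applies and already yields the equivalence of (i), (ii) and (iv); moreover (ii) $\Longrightarrow$ (iii) is immediate, because a simple semiring is by definition congruence-simple. Thus the corollary reduces, exactly as in Corollary~3.8, to proving (iii) $\Longrightarrow$ (iv).

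For that implication I would first recall the Horn--Kim description \cite[Theorem~5.3]{horn-kim:cos} of projective $\mathbf{B}$-semimodules as distributive lattices $M$ satisfying $|\{x\in M\mid x\le a\}|<\infty$ for every $a\in M$; this finiteness bound is what ultimately forces the conclusion. Next I would put on ${\rm CEnd}_{\mathbf{B}}(M)$ the relation
\begin{equation*}
  f\,\tau\,g \ \Longleftrightarrow\ \exists\,a\in M\ \forall\,x\in M:\ f(x)+a=g(x)+a\:.
\end{equation*}
As ${\rm CEnd}_{\mathbf{B}}(M)$ is a subsemiring of $\mathbf{E}_M$ and $\tau$ is the congruence already employed in Corollary~3.8, its restriction is automatically a congruence on ${\rm CEnd}_{\mathbf{B}}(M)$, and no separate verification is needed.

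The heart of the argument then proceeds as follows. Assuming $|M|\ge 2$, pick a nonzero $m\in M$: then $e_{0,m}\ne 0$, while $(e_{0,m},0)\in\tau$, witnessed by $a=m$ since $e_{0,m}(x)\le m$ for all $x$. The decisive point is that Lemma~3.4\,(i) certifies $e_{0,m}\in{\rm CEnd}_{\mathbf{B}}(M)$, so $\tau$ really does identify a nonzero element of the complete semiring with $0$ and hence is not the diagonal. Congruence-simpleness then forces $\tau$ to be universal, whence ${\rm id}_M\,\tau\,0$; unwinding the definition produces an element $a\in M$ with $x\le a$ for all $x\in M$. Applying the Horn--Kim bound to this $a$ gives $|M|=|\{x\mid x\le a\}|<\infty$, which is (iv).

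The only step that is not a literal copy of Corollary~3.8 --- and so the sole point requiring care --- is the passage from $\mathbf{E}_M$ to the smaller semiring ${\rm CEnd}_{\mathbf{B}}(M)$: one must check that the endomorphisms $e_{0,m}$ and ${\rm id}_M$ driving the argument actually lie in ${\rm CEnd}_{\mathbf{B}}(M)$ and not merely in $\mathbf{E}_M$. This is precisely what Lemma~3.4\,(i) delivers, and with it in place the whole argument transfers unchanged, so I anticipate no genuine obstacle.
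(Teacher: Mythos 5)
Your proposal is correct and is essentially identical to the paper's own proof: the authors dispose of Corollary~3.9 by stating that one should ``take into consideration Lemma~3.4\,(i) and repeat the proof of Corollary~3.8 verbatim,'' which is exactly the transplantation you carry out --- Theorem~3.6 (in place of Theorem~3.3) for (i)$\Leftrightarrow$(ii)$\Leftrightarrow$(iv), and the congruence $\tau$ together with Lemma~3.4\,(i) and the Horn--Kim characterization for (iii)$\Longrightarrow$(iv).
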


In light of Corollaries 3.8 and 3.9, we conclude this section with the
following interesting open

\begin{problem}
  Describe all $\mathbf{B}$-semimodules $M\in
  |_{\mathbf{B}}\mathcal{M}|$ for which the conditions (i), (ii), and
  (iii) of Corollaries 3.8 and 3.9 are equivalent.
\end{problem}

\section{Congruence-simpleness of complete hemirings}

Considering a complete left $R$-semimodule $(M,+,\Sigma)$ over a
complete hemiring $(R,+,\cdot,0,\Sigma)$ with $\Sigma$ symbolizing the
`\textit{summation}' in complete semimodules and hemirings, it is
natural to call $M$ a $dR$\textit{-semimodule} if $(\sum_{i\in
  \Omega}r_{i})m = \sum_{i\in \Omega}r_im$ for all $m\in M$ and
every family $\{r_i\mid i\in \Omega\}\subseteq R$ of elements of~$R$.
Also, a left $dR$-semimodule $M$ is called \textit{s-simple} iff
$R\,M\ne 0$, and $0$ is the only proper subsemimodule of $M$; a
subsemimodule $N$ of a complete left $R$-semimodule $M$ is
\textit{complete} iff $\Sigma _{i\in \Omega}m_i\in N$ for every family
$\{m_i\mid i\in \Omega\}$ of elements of $N$.  The following
observation is almost obvious.

\begin{lemma}
  For a left $dR$-semimodule $M$ over a complete hemiring $R$, the
  following statements are equivalent:
  \begin{enumerate}[\ \ (i)\ ]
  \item $M$ is s-simple;
  \item $M\neq 0$, and $M=Rm$ for any nonzero $m\in M$;
  \item $R\,M\ne 0$, and $0$ is the only proper complete subsemimodule
    of $M$.
  \end{enumerate}
\end{lemma}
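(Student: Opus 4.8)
The plan is to establish the cycle of implications (i) $\Rightarrow$ (iii) $\Rightarrow$ (ii) $\Rightarrow$ (i). The first of these is immediate and requires no real work: every complete subsemimodule is in particular a subsemimodule, so if the only subsemimodules of $M$ are $0$ and $M$, then a fortiori $0$ is the only proper complete subsemimodule (both $0$ and $M$ being complete, the former since $\sum_{i\in\Omega}0_M=0_M$); and the hypothesis $R\,M\ne 0$ is common to (i) and (iii).

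The heart of the argument is the implication (iii) $\Rightarrow$ (ii), and this is where both the completeness of $M$ and the $dR$-semimodule hypothesis are essential. Assuming (iii), note first that $M\ne 0$ since $R\,M\ne 0$. For a fixed nonzero $m\in M$ I would consider the cyclic subset $Rm=\{rm\mid r\in R\}$. It is routine that $Rm$ is a subsemimodule (closure under addition and scalar multiplication follows from distributivity and associativity, while $0_M=0_Rm\in Rm$); the key point is that $Rm$ is in fact a \emph{complete} subsemimodule, which is exactly the content of the $dR$-property, since for any family $\{r_i\mid i\in\Omega\}\subseteq R$ one has $\sum_{i\in\Omega}r_im=(\sum_{i\in\Omega}r_i)m\in Rm$. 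To invoke (iii) it then remains only to rule out $Rm=0$, and for this I would introduce the annihilator $N:=\{x\in M\mid Rx=0\}$ and check that it too is a complete subsemimodule: it is a subsemimodule by the same elementary computations, and completeness follows from axiom~(5) of the definition of complete semimodule, since $r\big(\sum_{i\in\Omega}x_i\big)=\sum_{i\in\Omega}rx_i=0$ whenever all $x_i\in N$. If $N=M$ we would get $R\,M=0$, contradicting (iii); hence $N=0$, so $Rm\ne 0$ for every nonzero $m$. Thus $Rm$ is a nonzero complete subsemimodule, and (iii) forces $Rm=M$, giving (ii).

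Finally, (ii) $\Rightarrow$ (i) is routine and uses neither completeness nor the $dR$-property. Given $M\ne 0$, choosing any nonzero $m$ yields $R\,M\supseteq Rm=M\ne 0$, so $R\,M\ne 0$; and if $N$ is a proper subsemimodule with $N\ne 0$, then picking a nonzero $n\in N$ gives $M=Rn\subseteq N$ by closure under scalar multiplication, forcing $N=M$, a contradiction. The only genuinely substantive step is the crux (iii) $\Rightarrow$ (ii), and there the sole subtlety is verifying that $Rm$ and the annihilator $N$ are closed under arbitrary $\sum$-sums; everything else is the same elementary bookkeeping that works for rings and modules, which is presumably why the statement is labelled ``almost obvious''.
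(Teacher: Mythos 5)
Your proof is correct and is essentially the paper's own argument: the substantive step --- showing the annihilator $N=\{x\in M\mid Rx=0\}$ and the cyclic subsemimodule $Rm$ are complete subsemimodules (the latter exactly via the $dR$-property) and then invoking the simplicity hypothesis --- is precisely what the paper does, proving (i)~$\Rightarrow$~(ii) this way and noting that (iii)~$\Rightarrow$~(ii) follows ``in the same fashion.'' Your only deviation is organizational: you arrange everything into the single cycle (i)~$\Rightarrow$~(iii)~$\Rightarrow$~(ii)~$\Rightarrow$~(i) instead of the paper's four implications through the hub (ii), a negligible (if slightly more economical) difference.
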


\begin{proof}
  (i) $\Longrightarrow$ (ii).  It is clear that $M\ne 0$.  Consider
  the subsemimodule $N := \{m\in M \mid Rm=0 \}$ of $M$.  It is
  obvious that $N\ne M$, and hence, because $M$ is s-simple, we have
  $N=0$, \textit{i.e.}, $Rm\ne 0$ for any nonzero $m\in M$; again
  because $M$ is s-simple, it follows that $M = Rm$ for any nonzero
  $m\in M$.

  (iii) $\Longrightarrow$ (ii).  Taking into consideration that $M$ is a
  $dR$-semimodule, this implication can be established in the same
  fashion as the previous one.

  The implications (ii) $\Longrightarrow$ (i) and (ii)
  $\Longrightarrow$ (iii) are obvious.
\end{proof}

A congruence $\rho$ on a complete left $R$-semimodule $M$ is called
\textit{complete} iff for any two families $\{m_i \mid i\in \Omega \}$
and $ \{m_i'\mid i\in\Omega\}$ of elements of $M$, the
following implication
\begin{equation*}
  \forall i\in \Omega : m_{i}\,\rho\, m_{i}'
  \ \Longrightarrow\
  (\mbox{$\sum_{i\in \Omega}m_i$})\,\rho\, 
  (\mbox{$\sum_{i\in\Omega}m_i'$})
\end{equation*}
is true.  For any complete left $R$-semimodule $M$, the diagonal,
$\vartriangle_M$, and universal, $M^2$, congruences are obviously
complete congruences on $M$; and a complete left $R$-semimodule $M$ is
called \textit{complete congruence-simple} (\textit{cc-simple}) iff
$\vartriangle_M$ and $M^2$ are the only complete congruences on $M$.
Of course, the right-sided analog of the cc-simpleness is defined
similarly.  A left (right) $dR$-semimodule $M$ is called
\textit{simple} iff it is both s-simple and congruence-simple,
\textit{i.e.}, $\vartriangle_M$ and $ M^2$ are the only congruences on
the left (right) $R$-semimodule $M$ (and in this case, $M$ is
obviously cc-simple, too).  The following fact, illustrating natural
simple complete semimodules, deserves mentioning.

\begin{proposition}
  Let $M$ be a nonzero complete $\mathbf{B}$-semimodule.  Then, $M$ is
  a simple complete left (right) $R$-semimodule over any dense
  complete subhemiring~$R$ of the endomorphism semiring
  $\mathbf{E}_M$.  In particular, any complete lattice~$M$ is a simple
  complete left (right) ${\rm CEnd}_{\mathbf{B}}(M)$-semimodule.
\end{proposition}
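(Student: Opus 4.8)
The plan is to equip $M$, via the natural action $r\cdot m := r(m)$ for $r\in R\subseteq\mathbf{E}_M$ and $m\in M$, with the structure of a (complete) $dR$-semimodule, and then to establish the two halves of simpleness — s-simpleness and congruence-simpleness — directly from the density of $R$. For the $dR$-structure I would note that the summation in $M$ is the join $\bigvee$ of the complete lattice and that the summation in the complete subhemiring $R$ is computed pointwise, so that $(\sum_i r_i)m=(\bigvee_i r_i)(m)=\bigvee_i r_i(m)=\sum_i r_i m$, which is exactly the $dR$-condition; condition~(5) of completeness, $r(\bigvee_i m_i)=\bigvee_i r(m_i)$, holds because the elements of $R$ are complete (join-preserving) endomorphisms, in particular for $R={\rm CEnd}_{\mathbf{B}}(M)$ and more generally for a dense complete subhemiring lying in ${\rm CEnd}_{\mathbf{B}}(M)$. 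This step is essentially bookkeeping, and I expect no difficulty beyond being careful that ``complete subhemiring'' supplies the join-preservation needed in~(5).

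For s-simpleness I would invoke Lemma~4.1 and show $M=Rm$ for every nonzero $m$. Here density does all the work: for any $b\in M$ the endomorphism $e_{0,b}$ lies in $R$, and since $m\neq 0$ (equivalently $m\not\le 0$) one has $e_{0,b}(m)=b$; thus $b\in Rm$ for all $b\in M$, whence $Rm=M$. As $M\neq 0$, Lemma~4.1\,(ii) then yields s-simpleness (and in particular $RM\neq 0$).

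For congruence-simpleness let $\rho$ be a congruence on the $R$-semimodule $M$ with $\rho\neq\vartriangle_M$, and pick $m\,\rho\,m'$ with $m\neq m'$. Setting $s:=m+m'=m\vee m'$ and using idempotency, adding $m$ (resp.\ $m'$) to both sides gives $m\,\rho\,s$ and $m'\,\rho\,s$; since $m\neq m'$, at least one of them — say $m$ — satisfies $m\neq s$, i.e.\ $m\lneq s$. Now for an arbitrary $b\in M$ the element $e_{m,b}$ again lies in $R$ by density, with $e_{m,b}(m)=0$ while $e_{m,b}(s)=b$ (as $s\not\le m$); applying $\rho$-compatibility to $m\,\rho\,s$ gives $0\,\rho\,b$ for every $b\in M$, and transitivity forces $\rho=M^2$. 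Combining the two parts shows that $M$ is a simple complete $dR$-semimodule; the right-sided statement is entirely analogous, and the final assertion is the special case $R={\rm CEnd}_{\mathbf{B}}(M)$, which is dense by Lemma~3.4\,(i) and complete.

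The main obstacle I anticipate lies not in the two simpleness arguments, which are short once density is in hand, but in pinning down the completeness setup: one must ensure that the elements of the ``dense complete subhemiring $R$'' really are complete endomorphisms, so that $M$ is a genuine complete $R$-semimodule, since a general element of $\mathbf{E}_M$ need only preserve finite joins.
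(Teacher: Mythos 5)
Your proof is correct and follows essentially the same route as the paper's: the action $r\cdot m:=r(m)$, Lemma~4.1 together with the density endomorphisms $e_{0,b}$ for s-simpleness, and an application of suitable $e_{a,b}$'s to a non-diagonal $\rho$-pair (made comparable by adding the two elements) to force $\rho=M^2$. The only cosmetic differences are that you collapse the paper's two-step argument ($e_{y,x}$ followed by $e_{0,m}$) into the single endomorphism $e_{m,b}$, and that you spell out the $dR$/completeness bookkeeping that the paper dismisses as ``easy to see''.
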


\begin{proof}
  For a dense complete subhemiring $R$ of the dense, by Lemma~3.4,
  subsemiring ${\rm CEnd}_{\mathbf{B}}(M)$ of the endomorphism
  semiring $\mathbf{E}_M$, the scalar multiplication from $R\times M$
  to $M$ is defined by $fm:=f(m)$ for any $f\in R$ and $m\in M$.
  Then, it easy to see that $M$ is a complete left $R$-semimodule and,
  since $m' = e_{0,m'}(m) = e_{0,m'}m\in Rm$ for any nonzero $m\in M$
  and $m'\in M$, we have that $Rm=M$.  Hence, by Lemma~4.1, $M$ is an
  s-simple complete left $dR$-semimodule.

  Now suppose that $\rho\ne\vartriangle_M$ is a congruence on the left
  $R$-semimodule $M$ and there exist elements $x,y\in M$ such that
  $x\ne y$ and $(x,y)\in\rho$; moreover, since $\rho$ is a congruence,
  without loss of generality we may even assume that $y<x$.  Then,
  $(x,0) = (e_{y,x}(x), e_{y,x}(y)) = (e_{y,x}\,x, e_{y,x}\,y)\in\rho$
  and $(m,0) = (e_{0,m}(x), e_{0,m}(0)) = (e_{0,m}\,x,
  e_{0,m}\,0)\in\rho$ for any $m\in M$, and therefore, $\rho =M^2$.
\end{proof}

For any element $x\in R$ of a complete hemiring $R$ and any cardinal
number~$n$, there exists the ``\textit{additive} $n$\textit{-power}''
$nx$ of $x$ defined by $nx:=\sum_{i\in I}x$ with $I$ to be a set of
the cardinality~$n$, \textit{i.e.}, $|I|=n$.  From the definition of
the summation $\Sigma$ right away follow the following
facts:
\begin{enumerate}[\ \ (1)\ ]
\item $(n_1n_2)x=n_1(n_2x)$ and $(n_1+n_2)x=n_1x+n_2x$ for any
  cardinal numbers~$n_1$ and $n_2$ and $x\in R$;
\item $n(x+y)=nx+ny$ and $n(xy)=(nx)y=x(ny)$ for any cardinal number
  $n$ and $ x,y\in R$.
\end{enumerate}

A \textit{complete left (right) congruence} on a complete hemiring $R$
is a complete congruence on the complete left (right) regular
$R$-semimodule $R$; a \textit{complete congruence} on a complete
hemiring $R$ is a congruence that is simultaneously a complete left
and right congruence on $R$; a complete hemiring $R$ is called
\textit{complete congruence-simple (cc-simple)} if $\vartriangle_R$
and $R^2$ are the only complete congruences on $R$.

\begin{proposition}
  For a cc-simple hemiring $R$, the following statements hold:
  \begin{enumerate}[\ \ (i)\ ]
  \item $nx=x$ for any cardinal number $n$ and $x\in R$;
  \item $R$ is a complete lattice with respect to $\bigvee_{i\in I}m_i
    := \sum_{i\in I}m_i$ for any family $\{m_i\mid i\in I\}$ of
    elements of $R$;
  \item if $R^2\ne 0$ and $\infty := \sum_{x\in R}x$, then $\infty^2 =
    \infty$.
  \end{enumerate}
\end{proposition}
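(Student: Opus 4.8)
The plan is to prove each of (i)--(iii) by constructing a suitable \emph{complete} congruence on $R$ and then applying cc-simpleness: the congruence must be $\vartriangle_R$ or $R^2$, and in each part one alternative delivers the desired identity while the other either is excluded or collapses $R$ to $0$ (in which case everything is trivial). Throughout I would freely use the cardinal-power rules $(n_1n_2)x=n_1(n_2x)$, $(n_1+n_2)x=n_1x+n_2x$, $n(x+y)=nx+ny$, $n(xy)=(nx)y=x(ny)$ stated just before the proposition, together with the repartition axiom of completeness in the form $\kappa\sum_i a_i=\sum_i\kappa a_i$ (regroup the double sum over $K\times\Omega$ with $|K|=\kappa$), and additivity of $\Sigma$.

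For (i) I would fix an infinite cardinal $\kappa$ and set $a\,\rho\,b\iff \kappa a=\kappa b$. A routine check (additive, two-sided multiplicative, and complete compatibility via the displayed rules) shows $\rho$ is a complete congruence. Assuming $R\ne 0$, the case $\rho=R^2$ is impossible: it would give $\kappa a=\kappa 0=0$ for every $a$, whereas $\kappa a=(1+\kappa)a=a+\kappa a$, so that $0=\kappa a=a+\kappa a=a+0=a$, forcing $R=0$. Hence $\rho=\vartriangle_R$. Since $\kappa\cdot\kappa=\kappa$ gives $\kappa(\kappa a)=\kappa a=\kappa(a)$, i.e.\ $\kappa a\,\rho\,a$, we conclude $\kappa a=a$ for all $a$ and all infinite $\kappa$. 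Finally, for a finite $n\ge 1$ one has $na=n(\aleph_0 a)=(n\aleph_0)a=\aleph_0 a=a$, and for infinite $n$ one takes $\kappa=n$; this yields $nx=x$ for every nonzero cardinal $n$.

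For (ii), by (i) we have $x+x=x$, so $(R,+,0)$ is a join-semilattice under $a\le b\iff a+b=b$. I would show the complete sum realises the supremum: each summand satisfies $m_j\le\sum_i m_i$ (add $m_j$ and use idempotency), and if $m_i\le u$ for all $i$ then, using additivity of $\Sigma$ and (i), $\sum_i m_i+u=\sum_i(m_i+u)=\sum_i u=u$, so $\sum_i m_i\le u$. Thus $\bigvee_i m_i=\sum_i m_i$ exists for every family; with bottom $0=\sum_{\emptyset}$ this makes $R$ a complete lattice.

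For (iii) note first that $\infty=\sum_{x\in R}x$ is the top element by (ii), that $\infty^2=\sum_{(x,y)}xy\ne 0$ since $R^2\ne 0$, and that monotonicity of multiplication gives $\infty^2\le\infty$; it remains to prove $\infty\le\infty^2$. The key step, which I expect to be the main obstacle, is choosing the right complete congruence: I would take $a\,\sigma\,b\iff a+\infty^2=b+\infty^2$. Verifying that $\sigma$ respects multiplication relies on $\infty^2c\le\infty^2$ and $c\,\infty^2\le\infty^2$, which follow from $\infty^3=\infty\cdot\infty^2\le\infty\cdot\infty=\infty^2$ and monotonicity, while completeness of $\sigma$ uses (i). Since $0+\infty^2=\infty^2+\infty^2=\infty^2$ gives $0\,\sigma\,\infty^2$ with $\infty^2\ne 0$, we have $\sigma\ne\vartriangle_R$, so cc-simpleness forces $\sigma=R^2$; then $\infty\,\sigma\,0$ yields $\infty+\infty^2=\infty^2$, i.e.\ $\infty\le\infty^2$, and therefore $\infty^2=\infty$.
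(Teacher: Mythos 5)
Your proof is correct, and its engine is the same as the paper's: in each part you build a complete congruence and let cc-simpleness force the dichotomy $\vartriangle_R$ versus $R^2$. Parts (ii) and (iii) essentially reproduce the paper's argument. Part (ii) is the same computation word for word. In part (iii) your relation $a\,\sigma\,b \Leftrightarrow a+\infty^2=b+\infty^2$ is precisely the paper's Bourne congruence $\equiv_A$ of the subtractive complete ideal $A=(\infty^2]$ in disguise (if $x+a'=y+b'$ with $a',b'\le \infty^2$, adding $\infty^2$ gives $x+\infty^2=y+\infty^2$; conversely take $a'=b'=\infty^2$); your endgame is marginally more direct, since you read $\infty\le\infty^2$ straight off $\infty\,\sigma\,0$ rather than using subtractiveness to conclude $A=R$. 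The genuine divergence is in part (i). The paper introduces the preorder $a\preceq b$ iff $nb=x+a$ for some cardinal $n$ and some $x\in R$, symmetrizes it to a complete congruence $\sim$, handles the degenerate case $\sim=R^2$ by citing the zerosumfreeness of complete hemirings (Golan, Proposition~22.28) to collapse $R$ to $0$, and in the case $\sim=\vartriangle_R$ deduces $a\sim na$ from $na=0+na$ and $1\cdot(na)=(n-1)a+a$. Your congruence $a\,\rho\,b\Leftrightarrow\kappa a=\kappa b$ for a fixed infinite cardinal $\kappa$ is a different device, and arguably a cleaner one: the compatibility checks fall out of the displayed cardinal rules plus the $K\times\Omega$ regrouping, the degenerate case is self-contained ($0=\kappa a=(1+\kappa)a=a+\kappa a=a$, so no external zerosumfreeness result is needed), and idempotency of infinite cardinal arithmetic ($\kappa\kappa=\kappa$, $n\aleph_0=\aleph_0$) finishes the argument. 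What the paper's preorder buys is an intrinsic comparison of elements with no choice of an auxiliary cardinal; what yours buys is brevity and independence from the cited zerosumfreeness fact. Note also that both you and the paper implicitly read ``any cardinal $n$'' as ``any nonzero cardinal $n$,'' as indeed one must.
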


\begin{proof}
  (i).  For $a,b\in R$, we write $a\preceq b$ iff there exist a
  cardinal number $n$ and $x\in R$ such that $nb=x+a$; and we shall
  show that the relation $\sim$ on $R$, defined by
  \begin{equation*}
    a\sim b\ \Longleftrightarrow\ 
    a\preceq b\text{ and }b\preceq a\:,
  \end{equation*}
  is a complete congruence on $R$.  Indeed, $a\preceq a$ because
  $1a=a=0+a$; if $a\preceq b$ and $b\preceq c$, then for some cardinal
  numbers $m$ and $n$ and $x,y\in R$, we have $nb=x+a$ and $mc=y+b$,
  and, hence, $(nm)c=n(mc)=n(y+b)=ny+nb=ny+x+a$, and therefore,
  $a\preceq c$.  From this one may easily see that the relation~$\sim$
  is an equivalence relation on $R$.

  To show the completeness of the relation $\sim$, consider two
  families $(a_i)_{i\in I}$ and $(b_i)_{i\in I}$ of elements of $R$
  such that $a_i\preceq b_i$ for each $i\in I$.  Then, there are
  cardinal numbers $n_i$ and elements $x_i\in R$, $i\in I$, such that
  $ n_ib_i=x_i+a_i $.  Appealing to the obvious natural sense of the
  cardinal arithmetic and assuming $n:=\Sigma _{i\in I}n_i$, one has
  $nb_i = (n-n_i)b_i+n_ib_i = (n-n_i)b_i+x_i+a_i = x_i'+a_i$, where
  $x_i' := (n-n_i)b_i+x_i$, $i\in I$, and therefore, $n\big(\sum_{i\in
    I}b_i\big) = \sum_{i\in I}(nb_i) = \sum_{i\in I}(x_i'+a_i) =
  \sum_{i\in I}x_i' + \sum_{i\in I}a_i$.  The latter implies
  $\sum_{i\in I}a_i\preceq \sum_{i\in I}b_i$ and, hence, the
  completeness of $\sim$\,.

  To see that $\sim$ is a congruence on $R$, suppose that $a\preceq b$
  and $ c\preceq d$.  Then there are cardinal numbers $m$ and $n$ and
  elements $x,y\in R$ such that $nb=x+a$ and $md=y+c$, and therefore,
  $(mn)(bd) = m(nb)d = m(x+a)d = (x+a)md = (x+a)(y+c) = xy+xc+ay+ac$.
  For the latter implies $ac\preceq bd$, the relation $\sim$ is a
  congruence on $R$.

  Thus $\sim\, = \vartriangle_R$, or $\sim\, = R^2$.  In the second
  case, $a\sim 0$ for every $a\in R$ and, therefore, there exists a
  cardinal number $n$ and an element $x\in R$ such that $0=n0=x+a$.
  From this, taking into consideration the hemiring variation of
  \cite[Proposition 22.28]{golan:sata} by which the complete hemiring
  $R$ is zerosumfree, one gets $R=0$.

  If $\sim\, = \vartriangle_R$, for any $a\in R$ and nonzero cardinal
  number $n$ consider $b:=na$.  Since $na=0+b$ and $1b=b=na=(n-1)a+a$,
  one gets that $ a\sim b$, and hence, $a=b=na$.\smallskip

  (ii).  By (i), $R$ is additively idempotent, and we shall show that
  the order relation $x\le y$ if and only if $x+y=y$ turns $R$ into a
  complete lattice.  Let $(x_i)_{i\in I}$ be a family of elements in
  $R$ and $z:=\sum_{i\in I}x_i$.  If $I=\varnothing$, then $z$ is
  obviously the neutral and, hence, it is the supremum of the elements
  $x_i$, $i\in I$, so assume that $I\ne\varnothing $.  First, note
  that for all $j\in I$, we have $x_j\le z$ because $x_j+\sum_{i\in
    I}x_i = 2x_j+\sum_{i\in I\setminus\{j\}}x_i = \sum_{i\in I}x_i$.
  Now, suppose that $y\in S$ and satisfies $x_j\le y$ for all $j\in
  I$, then, for $\sum_{i\in I}x_i+y = \sum_{i\in I}x_i + \sum_{i\in
    I}y = \sum_{i\in I}(x_i+y) = \sum_{i\in I}y = y$, one gets that
  $z\le y$, and hence, $z$ is the supremum again, and, therefore, $R$
  is a complete lattice.\smallskip

  (iii).  Let $a:=\infty^2$ and $A := (a] := \{x\in R\mid x\le a\}$.
  First, note that if $x,y\in R$, then $xy\le x\infty \le \infty
  \infty=a$, so that $xy\in A$; and, hence, applying (ii), one has
  that $A$ is a complete subsemimodule of the complete regular left
  $R$-semimodule $_RR$.  Also, if $x+y\in A$ and $y\in A$, then $x\le
  x+y\le x+a\le a,$ and $x\in A$, too, i.e., $A$ is a subtractive
  ideal of $R$.

  Next, consider the Bourne congruence (see, for example,
  \cite[p.\,78]{golan:sata}) on $R$ defined by the ideal $A$: $
  x\equiv_Ay$ iff there exist elements $a,b\in A$ such that $x+a=y+b$.
  For $A$ is a complete subsemimodule of the complete left
  $R$-semimodule $_RR$, the congruence $\equiv_A$ is complete.  Thus
  $\equiv_A\,=\,\vartriangle_R$, or $\equiv _A\,=R^2$.  If
  $\equiv_A\,=\,\vartriangle_R$, then $A=\{0\}$, and hence
  $R\,R=\{0\}$, what contradicts with $R^2\neq 0$.  So,
  $\equiv_A\,=R^2$, \textit{i.e.}, $x\equiv_A0$ for all $x\in R$.
  From this and the subtractiveness of $A$, one easily concludes that
  $A=(a]=R$ and, therefore, $a=\infty$.
\end{proof}

Let $R$ be an additively idempotent complete hemiring, and $R^{\ast} =
{\rm CHom}_{\mathbf{B}}(R, \mathbf{B})$.  Then $R^{\ast}$ is a left
$dR$-semimodule with the scalar multiplication defined by $r\phi(x) :=
\phi(xr)$ for all $r,x\in R$ and $\phi\in R^{\ast}$.  As it is clear
that any homomorphism $\phi\in R^{\ast}$ is uniquely characterized by
the set $A:=\phi^{-1}(0)$, or by the element $a:=\sum_{x\in A}x\in
A=(a]:=\{x\in R\mid x\le a\}$, we will equally use $\phi_A$ and
$\phi_a$ for $\phi$.  Using these notations, introduce the cyclic left
$R$-subsemimodule $R\phi_0 := \{r\phi_0\mid r\in R\}\subseteq
R^{\ast}$ of $R^{\ast}$ generated by the homomorphism $\phi_0\in
R^{\ast}$, and note that for any $r\phi_0\in R\phi_0$ and any $x\in R$
\begin{equation*}
  r\phi_0(x)=\phi _0(xr)=
  \begin{cases}
    0 & \text{if }xr=0\:, \\ 
    1 & \text{otherwise}\:.
  \end{cases}
\end{equation*}

The following observations will prove to be useful.

\begin{lemma}
  For an additively idempotent complete hemiring $R$, the
  following statements are true:
  \begin{enumerate}[\ \ (i)\ ]
  \item $A=\phi^{-1}(0)$ is a left ideal of $R$ for any $\phi\in R\phi
    _0$;
  \item if $R$ is also cc-simple, $\psi\in R\phi_0$ and $\phi_a =
    \phi_A\in R\phi_0$, then $I:=(a\psi)^{-1}(0)$ is an ideal of $R$
    and $a\psi=0$ or $a\psi =\phi_0$.
  \end{enumerate}
\end{lemma}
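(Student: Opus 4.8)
The plan for (i) is immediate. Given $\phi\in R\phi_0$, write $\phi=r\phi_0$ and read off from the displayed description of $r\phi_0$ that $A=\phi^{-1}(0)=\{x\in R\mid xr=0\}$ is the left annihilator of $r$. One then checks in one line, using $0r=0$, $(x+y)r=xr+yr$ and $(tx)r=t(xr)$, that $A$ contains $0$ and is closed under addition and under left multiplication by $R$, hence is a left ideal.

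For (ii) I would first rewrite $a\psi$ in the form to which (i) applies. Writing $\psi=s\phi_0$, the left action $r\phi(x)=\phi(xr)$ gives $(a\psi)(x)=\psi(xa)=\phi_0(xas)=((as)\phi_0)(x)$, so that $a\psi=(as)\phi_0\in R\phi_0$ and, by (i), $I=(a\psi)^{-1}(0)=\{x\in R\mid x(as)=0\}$ is already a left ideal. Put $c:=as$. The heart of the argument is to promote $I$ to a \emph{two-sided} ideal, and here the hypothesis $\phi_a\in R\phi_0$ is used essentially: applying (i) to $\phi_a$ shows that $(a]=\phi_a^{-1}(0)$ is a left ideal, and since $a\in(a]$ this yields $ra\in(a]$, i.e. $ra\le a$, for every $r\in R$. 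Because $R$ is additively idempotent, multiplication is monotone for the induced order, so $ra\le a$ forces $ras\le as$ and then $x(ras)\le x(as)$; hence if $x\in I$, i.e. $x(as)=0$, then $(xr)(as)=x(ras)\le x(as)=0$ for every $r$, so $xr\in I$. Thus $I$ is a two-sided ideal.

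With $I$ two-sided in hand, the plan is to invoke cc-simpleness through a Bourne congruence, exactly as in the proof of Proposition~4.3(iii). Since $a\psi$ is a complete homomorphism, $I$ is a complete subsemimodule of ${}_RR$, and it is subtractive because $R$ is additively idempotent (a sum, being a join, is $0$ only if each summand is). Hence the Bourne congruence $\equiv_I$ is a complete two-sided congruence, and cc-simpleness forces $\equiv_I=\vartriangle_R$ or $\equiv_I=R^2$; as $I$ is subtractive its $0$-class is exactly $I$, so the two cases read $I=0$ and $I=R$. Finally I would translate back: $I=\{x\mid xc=0\}=R$ means $Rc=0$, i.e. $a\psi$ is the zero homomorphism, while $I=\{0\}$ means $a\psi$ and $\phi_0$ have the same kernel $(0]$, whence $a\psi=\phi_0$ since elements of $R^{\ast}$ are determined by their kernels. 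In either case $I$ is (trivially) an ideal, completing the proof. The one genuinely delicate point, and the place I expect the main obstacle, is the two-sidedness of $I$: the naive left annihilator is only a left ideal, yet the Bourne machinery needs a two-sided ideal to yield a two-sided congruence to which the (two-sided) notion of cc-simpleness applies; what rescues the argument is the order-theoretic fact $ra\le a$ extracted from (i) — that $a$ is the top of a left ideal — combined with monotonicity of multiplication.
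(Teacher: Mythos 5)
Your proof is correct, and its overall skeleton is the paper's: part (i) is the same left-annihilator computation, and in part (ii) you likewise reduce everything to showing that $I$ is a two-sided ideal, and then play the complete Bourne congruence $\equiv_I$ against cc-simpleness, using subtractiveness of $I$ to translate $\equiv_I\,=\,\vartriangle_R$ into $I=\{0\}$ (hence $a\psi=\phi_0$, since elements of $R^{\ast}$ are determined by their kernels) and $\equiv_I\,=R^2$ into $I=R$ (hence $a\psi=0$). Where you genuinely depart from the paper is the right-ideal step, which you correctly single out as the crux. The paper writes $a=\sum_{r\in A}r$ and uses completeness of $\psi$ together with distributivity of multiplication over infinite sums to obtain the membership criterion: $x\in I$ iff $\psi(xr)=0$ for every $r\in A$; right closure then follows because $A$ is a left ideal, as $\psi((xs)r)=\psi(x(sr))=0$ for $r\in A$, $s\in R$. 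You avoid infinite sums at this point altogether: from (i) applied to $\phi_a$ you extract the single inequality $ra\le a$ (since $a\in A=(a]$ and $A$ is a left ideal) and finish by monotonicity of multiplication in the additively idempotent order, $(xr)(as)=x\bigl((ra)s\bigr)\le x(as)=0$. Both arguments consume the hypothesis $\phi_a=\phi_A\in R\phi_0$ in exactly the same way, namely through part (i); yours is the more elementary variant, trading the $\Sigma$-bookkeeping for order-theoretic reasoning, while the paper's explicit criterion $\psi(xA)=0$ is a reusable description of $I$. Your remaining steps (completeness of $I$ from completeness of $a\psi$; subtractiveness because a sum can vanish only if each summand does) correctly fill in what the paper dismisses as ``easy to see,'' so there is no gap.
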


\begin{proof}
  (i).  If $\phi = r\phi_0$, then $ar=0$ for any $a\in A$, and hence,
  $xar=0$ for any $x\in R$, and $xa\in A$ too.

  (ii).  By (i), to show that $I$ is an ideal, we need only to show
  that it is a right ideal.  Since $a = \sum_{r\in A}r$ it holds $x\in
  I$ iff $\big(\sum_{r\in A}r\big)\,\psi(x)=0$ iff $\psi(xr)=0$ for
  every $r\in A$.  From this and noting that, by (i), $A$ is a left
  ideal, for any $x\in I$, $ s\in R$, and $r\in A$, one has
  $0=\psi(x(sr))=\psi((xs)r)$ and, hence, $xs\in I$.

  Next, it is also easy to see that $I$ is a subtractive complete
  ideal, and the Bourne congruence $\equiv_I$ given on $R$ by
  $x\equiv_Iy$ iff there exist elements $a,b\in I$ such that
  $x+a=y+b$, is a complete one, and hence, $\equiv_I$ is $\vartriangle
  _R$ or $R^2$.  If $\equiv_I\, =\, \vartriangle_R$, then $I=\{0\}$,
  and therefore, $a\psi =\phi_0$.  In the second case, we have that
  $x\equiv _I0$ for any $x\in R$, and the subtractiveness of $I$
  implies that $I=R$, and therefore, $a\psi =0$.
\end{proof}

The following observation is important and will prove to be useful.

\begin{proposition}
  For any cc-simple hemiring $R$ with nonzero multiplication, the left
  $R$-semimodule $R\phi_0$ is a simple complete semimodule in which
  $R\phi_0$ is a complete lattice with respect to $\bigvee_{i\in
    I}\phi_i := \sum_{i\in I}\phi_i$ for any family $\{\phi_i\mid
  i\in I\}$ of homomorphisms of $R\phi _0$.
\end{proposition}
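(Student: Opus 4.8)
The plan is to verify the three assertions in turn: that $R\phi_0$ is closed under the summation of $R^{\ast}$ (hence a complete subsemimodule), that this summation is the join of a complete lattice order on $R\phi_0$, and that $R\phi_0$ is simple as a left $R$-semimodule, i.e. both s-simple and congruence-simple in the sense of Section~4. The first two are quickly dispatched. Since $R^{\ast}$ is a $dR$-semimodule, for any family $\phi_i = r_i\phi_0 \in R\phi_0$ one has $\sum_{i}\phi_i = \sum_i r_i\phi_0 = (\sum_i r_i)\phi_0 \in R\phi_0$, so $R\phi_0$ is complete; and as $R\phi_0 \subseteq R^{\ast}$ takes values in the additively idempotent $\mathbf{B}$, it is additively idempotent, whence the argument of Proposition~4.3(ii) applies verbatim to show that $x \le y \Leftrightarrow x+y=y$ turns $R\phi_0$ into a complete lattice with $\bigvee_i\phi_i = \sum_i\phi_i$.

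For s-simpleness I would use Lemma~4.1 and show $R\psi = R\phi_0$ for every nonzero $\psi \in R\phi_0$; since $R\phi_0$ is a submodule containing $\psi$, only the inclusion $\phi_0 \in R\psi$ needs proof. The key observation is that $\infty := \sum_{x\in R}x$ (the top of $R$ by Proposition~4.3(ii)) satisfies $\infty\psi = \phi_0$. Indeed, applying Lemma~4.4(ii) with $\phi_a = 0 \in R\phi_0$, whose associated element is $a = \infty$, gives $\infty\psi \in \{0,\phi_0\}$; and $\infty\psi = 0$ would force $\psi(\infty^2)=\psi(\infty)=0$, using $\infty^2=\infty$ from Proposition~4.3(iii) (here $R^2 \neq 0$), so that $\infty \le b$ for the top element $b$ of $\psi^{-1}(0)$ and hence $\psi = 0$, a contradiction. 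Thus $\phi_0 = \infty\psi \in R\psi$, giving $R\phi_0 = R\psi$.

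For congruence-simpleness, let $\rho \neq \vartriangle_{R\phi_0}$ and pick $(\phi,\psi)\in\rho$ with $\phi\neq\psi$. Adding the reflexive pairs $(\phi,\phi)$ and $(\psi,\psi)$ and using additive idempotency, I would reduce to a strictly comparable pair $(\chi,\eta)\in\rho$ with $\eta<\chi$. Writing $\eta=\phi_e$ for its associated element $e$, left-multiplication by $e$ yields $e\eta=0$ (because $\eta^{-1}(0)=(e]$ is a left ideal by Lemma~4.4(i) and $e\in(e]$), while $e\chi\neq 0$; hence $(e\chi,0)\in\rho$ with $e\chi\neq 0$, and applying $\infty$ gives $(\phi_0,0)\in\rho$. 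Scalar multiplication then spreads this to $(r\phi_0,0)\in\rho$ for all $r$, so every element is $\rho$-related to $0$ and $\rho=(R\phi_0)^2$.

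The main obstacle is the claim $e\chi\neq 0$, i.e. that left-multiplication by $e$ does not annihilate the strictly larger $\chi$. If $\chi=\phi_d$ with $\chi=t\phi_0$, then $e\chi=0$ would mean $Re\subseteq(d]=\{x\mid xt=0\}$, i.e. $R(et)=0$; but picking a point $z$ with $\chi(z)=1$, $\eta(z)=0$ gives $z\le e$ and $zt\neq0$, so $et\ge zt\neq0$ by monotonicity of multiplication. To finish one needs that $R$ has no nonzero left annihilator, $\{w\mid Rw=0\}=0$, which I would obtain from cc-simpleness by checking that this set is a subtractive complete ideal and running the Bourne-congruence argument exactly as in the proofs of Proposition~4.3 and Lemma~4.4. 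This zero-annihilator fact, together with the collapsing action of $\infty$, is what makes both the s-simpleness and the congruence-simpleness arguments go through.
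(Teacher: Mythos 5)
Your proof is correct; the lattice-structure and s-simpleness parts coincide with the paper's (both hinge on Lemma 4.4(ii) applied with $\phi_\infty = 0 \in R\phi_0$ together with $\infty^2=\infty$ from Proposition 4.3(iii)), but your congruence-simpleness step runs on a genuinely different engine. The paper takes $(\psi,\phi)\in\rho$ with $\psi(r_0)=1$, $\phi(r_0)=0$, puts $a := \sum_{x\in\phi^{-1}(0)}x$, and shows $a\psi\neq 0$ by first establishing the inequality $\infty r\geq r$ for all $r\in R$; that inequality is proved inside ${\rm CEnd}_{\mathbf{B}}(R\phi_0)$ and then pulled back to $R$ through the natural map given by left multiplications, whose injectivity rests on the general fact that a nonzero complete hemiring homomorphism from a cc-simple hemiring into a complete endomorphism semiring is an embedding. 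You bypass this embedding entirely: you first reduce to a strictly comparable pair $\eta<\chi$ in $\rho$ (a reduction the paper does not make), and you obtain the crucial non-vanishing $e\chi\neq 0$ from the vanishing of the left annihilator $\{w\in R\mid Rw=0\}$, itself proved by the same Bourne-congruence technique as in Proposition 4.3(iii) and Lemma 4.4(ii) -- this is where the hypothesis $R^2\neq 0$ enters your argument, just as it enters the paper's through $\infty^2=\infty$. Your route is more elementary and self-contained; what the paper's detour buys is precisely the embedding fact $R\hookrightarrow {\rm CEnd}_{\mathbf{B}}(R\phi_0)$, which the proof of Theorem 4.6 explicitly reuses (``by the fact mentioned in the proof of Proposition 4.5''), so substituting your proof would require establishing that fact separately there. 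Only one point needs patching: to invoke Lemma 4.1(ii) you must also record that $R\phi_0\neq 0$, which is immediate from $\infty\phi_0(\infty)=\phi_0(\infty^2)=\phi_0(\infty)=1$; your sketched annihilator lemma itself goes through exactly as indicated, since that set is a subtractive complete two-sided ideal and the universal alternative for its Bourne congruence would force $RR=0$.
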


\begin{proof}
  It is clear that $R\phi_0$ is a left $dR$ -semimodule as well as a
  complete lattice with $\bigvee_{i\in I}\phi_i := \sum_{i\in
    I}\phi_i$ for any family $\{\phi_i\mid i\in I\} $ of elements of
  $R\phi_0$.  Now, let $\infty := \sum_{r\in R}$; then, applying
  Proposition 4.3\,(iii), one gets that $\infty \phi_0(\infty ) =
  \phi_0(\infty^2) = \phi_0(\infty) = 1$, and therefore,
  $R\,R\phi_0\ne 0$.

  Next, it is clear that $\phi_{\infty}\in R\phi_0$; and since $\infty
  \psi(\infty) = \psi(\infty^2) = \psi(\infty)\ne 0$ for any nonzero
  $\psi\in R\phi_0$, by Lemma~4.4, one gets that $\infty \psi =
  \phi_0$ and, hence, $R\psi = R\phi_0$ for any nonzero $\psi \in
  R\phi_0$; and therefore, by Lemma~4.1, $R\phi_0$ is s-simple.

  Thus, we have only to show that $R\phi_0$ is a cc-simple
  $R$-semimodule; and, in fact, we shall prove that $R$-semimodule
  $R\phi_0$ is even congruence-simple.  But first notice the following
  general and obvious fact: If a complete semilattice $(M,+,0_M)$ is a
  left complete $dR$-semimodule over a cc-simple hemiring $R$, then
  any nonzero complete hemiring homomorphism from $R$ to the complete
  endomorphism semiring ${\rm CEnd}_{\mathbf{B} }(M)$ is injective,
  \textit{i.e.}, a hemiring embedding; in particular, if $R\,M\ne 0$,
  the natural homomorphism from $R$ to ${\rm CEnd}_{\mathbf{B}}(M)$ is
  an embedding.

  Now, let $\rho\ne\,\vartriangle_{R\phi_0}$ be a congruence on the
  left $dR$-semimodule $R\phi_0$.  Then, there exist homomorphisms
  $\psi, \phi \in R\phi_0$ and $r_0\in R$ such that $(\psi ,\phi)\in
  \rho$ and $\psi(r_0)\ne \phi(r_0)$; and without loss of generality,
  we may assume that $\psi(r_0) = 1\in \mathbf{B}$ and $\phi(r_0) =
  0\in \mathbf{B}$.  Then using Lemma~4.4, for $A := \phi^{-1}(0)$ and
  $a := \sum_{x\in A} x\in A$ and any $r\in R$, we have
  $a\phi(r)=\phi(ra)=0$, \textit{i.e.}, $a\phi=0$.

  However, $a\psi\ne 0$.  Indeed, supposing $a\psi=0$, one gets
  $(\infty a)\psi =0$ and $(\infty r_0)\psi=0$ since $r_0\le a$, and
  hence, by Proposition 4.3\,(iii), $0=(\infty r_0)\psi(\infty ) =
  \psi(\infty^2r_0) = \psi(\infty r_0)$.  For $\phi_{\infty }\in
  R\phi_0$, by Lemma~4.4, we have $(\infty r)(x\phi_0) = \infty
  ((rx)\phi_0)\geqslant (rx)\phi_0=r(x\phi_0)$ for any $r,x\in R$, and
  therefore, taking into consideration the fact mentioned above
  together with the observation that the two multiplications on the
  left by $\infty r$ and $r\in R$, respectively, define the
  corresponding natural complete endomorphism in ${\rm
    CEnd}_{\mathbf{B} }(R\phi_0)$, one immediately gets $\infty
  r\geqslant r$ for any $r\in $ $ R $.  From the latter, we have $\psi
  (\infty r_0)\geqslant \psi (r_0) = 1$ which is a contradiction to
  $\psi (\infty r_0) = 0$.  So, $a\psi \neq 0$, and therefore, by
  Lemma~4.4, $a\psi = \phi_0$.

  Thus, the inclusion $(\psi, \phi)\in \rho$ and Lemma~4.4 imply
  that $(r(a\psi),r(a\phi)) = (r\phi_0,0)\in \rho$ for any $r\in R$,
  what, in turn, implies that $\rho$ is the universal congruence on
  $R\phi_0$, \textit{i.e.}, $\rho = (R\phi_0)^2$.
\end{proof}

In light of the proof of Proposition 4.5, it is natural to state the
following, in our view interesting, problems.

\begin{problem}
  Does there exist an s-simple, but not congruence-simple, complete
  left semimodule over a cc-simple hemiring?
\end{problem}

\begin{problem}
  For a cc-simple hemiring $R$, find and/or describe up to isomorphism
  all complete simple $R$-semimodules.
\end{problem}

Of course, it is an important and natural question whether for complete
hemirings the congruence-simpleness and cc-simpleness are the same concepts.
Our next result not only has positively answered this question, but also
describes all such hemirings.

\begin{theorem}
  For a complete hemiring $R$, the following are equivalent:
  \begin{enumerate}[\ \ (i)\ ]
  \item $R$ is congruence-simple;
  \item $R$ is cc-simple;
  \item $|R|\le 2$, or $R$ is isomorphic to a dense complete
    subhemiring $S\subseteq {\rm CEnd}_{\mathbf{B}}(M)$ of the
    complete endomorphism hemiring ${\rm CEnd}_{\mathbf{B}}(M)$ of a
    nonzero complete lattice $M$.
  \end{enumerate}
\end{theorem}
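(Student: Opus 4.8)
The plan is to prove the cycle of implications (i) $\Rightarrow$ (ii) $\Rightarrow$ (iii) $\Rightarrow$ (i). The two end implications are short. For (i) $\Rightarrow$ (ii), every complete congruence is in particular a congruence, and both $\vartriangle_R$ and $R^2$ are complete; hence if $R$ has only the two trivial congruences it has only the two trivial complete congruences, so it is cc-simple. For (iii) $\Rightarrow$ (i), the case $|R|\le 2$ is immediate since a set with at most two elements carries only the diagonal and universal equivalence relations; and if $R\cong S$ for a dense complete subhemiring $S\subseteq {\rm CEnd}_{\mathbf{B}}(M)$, then $S$ is a fortiori a dense subhemiring of $\mathbf{E}_M$, so Corollary 3.5(ii) gives that $S$, and hence $R$, is congruence-simple.

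The real content is (ii) $\Rightarrow$ (iii). First I would invoke Proposition 4.3: a cc-simple $R$ is additively idempotent and a complete lattice under $\bigvee = \sum$. If $R^2=0$, then every additive complete congruence is automatically a complete hemiring congruence, so I would produce a nontrivial one unless $R$ is small: for $a\in R$ the relation $x\sim_a y \Leftrightarrow x+a=y+a$ is a complete congruence which is diagonal only if $a=0$ and universal only if $a=\infty:=\sum_{x\in R}x$; thus an element $a\notin\{0,\infty\}$ would violate cc-simpleness, forcing $R\subseteq\{0,\infty\}$, i.e. $|R|\le 2$. So I may assume $R^2\ne 0$. Then Proposition 4.5 supplies the complete lattice $M:=R\phi_0$, a simple complete left $R$-semimodule with $R\,M\ne 0$. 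The scalar action gives a complete hemiring homomorphism $\pi\colon R\to {\rm CEnd}_{\mathbf{B}}(M)$, $\pi(r)(\psi)=r\psi$; since $R\,M\ne 0$, the general fact recorded in the proof of Proposition 4.5 (a nonzero complete homomorphism out of a cc-simple hemiring into ${\rm CEnd}_{\mathbf{B}}(M)$ is injective) shows $\pi$ is an embedding. Thus $R\cong\pi(R)=:\bar R$, a complete subhemiring of ${\rm CEnd}_{\mathbf{B}}(M)$ of the nonzero complete lattice $M$, and it remains only to prove that $\bar R$ is dense.

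For density I must show $e_{a,b}\in\bar R$ for all $a,b\in M$. The natural starting point is $\infty=\sum_{x\in R}x$: by the computation $\infty\psi=\phi_0$ for every nonzero $\psi$ (Proposition 4.5) together with $\infty\cdot 0=0$, one sees $\pi(\infty)=e_{0,\phi_0}$, so at least one endomorphism of the required form lies in $\bar R$. Composing on the left, Lemma 2.1 gives $\pi(w)\circ e_{0,\phi_0}=e_{0,\pi(w)(\phi_0)}=e_{0,w\phi_0}$, and as $w$ runs over $R$ the element $w\phi_0$ runs over all of $M=R\phi_0$; hence $e_{0,b}\in\bar R$ for every $b\in M$. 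To reach arbitrary $e_{a,b}$ I would use Lemma 3.4(ii): $e_{0,b}\circ f=e_{c,b}$ with $c=\bigvee\{x : f(x)=0\}$. So it suffices to produce, for each $a\in M$, an $f=\pi(w)\in\bar R$ whose zero-set $\{x\in M : \pi(w)(x)=0\}$ is exactly the down-set $(a]:=\{x : x\le a\}$; then $e_{0,b}\circ f=e_{a,b}$.

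The main obstacle is precisely this last step — realizing an arbitrary first coordinate $a$ as the threshold of a left multiplication $\pi(w)$. Unwinding the definitions, $\pi(w)(r\phi_0)=(wr)\phi_0$ vanishes iff $R(wr)=0$, so the required $w$ must be chosen so that $R(wr)=0$ holds exactly on the left ideal cutting out $(a]$; controlling this uses the complete-lattice structure of $M=R\phi_0$, the fact (from Proposition 4.5) that $R\psi=M$ for every nonzero $\psi$, and the infinitary distributivity available because $R$ is complete. I expect that the genuine work lies in choosing $w$ — possibly as a complete sum $\sum$ of elements, exploiting completeness of $\bar R$ — so that $\pi(w)$ has the prescribed down-set as kernel; once every $e_{a,b}$ is shown to lie in $\bar R$, density and hence (iii) follow, closing the cycle.
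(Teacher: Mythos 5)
Your treatment of (i) $\Rightarrow$ (ii), of (iii) $\Rightarrow$ (i), and of the degenerate case $R^2=0$ (where your translation congruences $x\sim_a y \Leftrightarrow x+a=y+a$ give a clean alternative to the paper's surjection onto the two-element hemiring $\mathbf{2}$) is correct, and so is your setup for the main case: taking $M=R\phi_0$ from Proposition 4.5, the embedding $\pi\colon R\to{\rm CEnd}_{\mathbf{B}}(M)$, the identity $\pi(\infty)=e_{0,\phi_0}$, and the consequence $e_{0,b}=\pi(w\infty)\in\bar R$ for all $b\in M$ are all fine. But the proof stops exactly where the real work begins. Density requires, for every $a\in M$ with $a\ne\infty_M$, an element $w\in R$ such that $\{x\in M \mid \pi(w)(x)=0\}$ is precisely the down-set $(a]$, and you only state that you ``expect'' such a $w$ can be chosen as a complete sum. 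Nothing you have written guarantees this; producing such a $w$ is the core of the paper's argument, not a routine verification.

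The missing construction runs as follows in the paper. For $x\in M$ put $l_R(x):=\{r\in R\mid rx=0\}$; since $r\bigl(\sum_i m_i\bigr)=\sum_i rm_i$ and $M$ is additively idempotent, one gets $l_R\bigl(\sum_i m_i\bigr)=\bigcap_i l_R(m_i)$, so the relation $x\sim y \Leftrightarrow l_R(x)=l_R(y)$ is a \emph{complete} congruence on $M$, which must equal $\vartriangle_M$ because $M$ is simple; this yields the order-characterization $x\le y \Leftrightarrow l_R(y)\subseteq l_R(x)$. Now, given $a\ne\infty_M$ and any $x\nleq a$, this characterization gives $l_R(a)\nsubseteq l_R(x)$, so $r\mapsto rx$ is a nonzero $R$-homomorphism $l_R(a)\to M$; by simpleness of $M$ its image is all of $M$, whence there is $r_x\in l_R(a)$ with $r_xx=\infty_M$. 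The sought element is $s:=\sum_{x\nleq a}r_x\in l_R(a)$ (the sum exists because $l_R(a)$ is a complete left ideal): then $sx=0$ for $x\le a$ and $sx\ge r_xx=\infty_M$ for $x\nleq a$, so the zero-set of $\pi(s)$ is exactly $(a]$. (The paper then composes with $r$ satisfying $r\infty_M=b$ to get $e_{a,b}=\pi(rs)$ directly; your reduction via $e_{0,b}\circ\pi(s)$ and Lemma 3.4(ii) would finish equally well.) Until you supply this step --- in particular the complete congruence $l_R(x)=l_R(y)$ on $M$ and the surjectivity argument producing the elements $r_x$ --- your proof of (ii) $\Rightarrow$ (iii) has a genuine gap.
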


\begin{proof}
  (i) $\Longrightarrow$ (ii).  This is obvious.

  (ii) $\Longrightarrow$ (iii).  Let $R$ be a cc-simple hemiring and,
  hence, by Proposition~4.3\,(i), $R$ is additively idempotent, too.
  For the hemiring $R$, there are only the following two
  possibilities: $R$ with nonzero multiplication, \textit{i.e.},
  $R\,R\ne 0$, or $R\,R=0$.  In the second case, considering for a
  nonzero hemiring $R$ the complete hemiring homomorphism from $R$
  onto the additively idempotent two element hemiring $\mathbf{2} =
  \{0,1\}$ with the zero multiplication defined by $f(0)=0$ and
  $f(x)=1$ for all nonzero $x\in R$, we immediately obtain the
  isomorphism $R\cong \mathbf{2}$.

  So, now let $R\,R\ne 0$, and $M$ be a simple complete left
  $dR$-semimodule in which $M$ is a complete lattice with respect to
  $\bigvee_{i\in I}m_i := \sum_{i\in I}m_i$ for any family $\{m_i\mid
  i\in I\}$ of elements of $M$; an existence of such an $R$-semimodule
  $M$ is guaranteed by Proposition 4.5.  By the fact mentioned in the
  proof of Proposition 4.5, there exists the natural complete
  injection of the hemiring $R$ into the complete endomorphism
  semiring ${\rm CEnd}_{\mathbf{B}}(M)$, \textit{i.e.}, we can
  consider $R$ as a natural complete subhemiring of ${\rm
    CEnd}_{\mathbf{B}}(M)$.

  Let $l_R(x) := \{r\in R\mid rx=0\}$ be a complete left ideal of $R$
  defined for any $x\in M$.  For any family $\{m_i\mid i\in \Omega\}$
  of elements of the lattice $M$ and $r\in R$, it is clear that
  $rm_i\le \sum_{i\in \Omega}(rm_i)$ for any $i\in \Omega$ and, hence,
  $r\big(\sum_{i\in \Omega}m_i\big) = \sum_{i\in \Omega}(rm_i)=0$ iff
  $rm_i=0$ for each $i\in \Omega $; and therefore,
  $l_R\big(\Sigma_{i\in \Omega}m_i\big) = \bigcap_{i\in
    \Omega}l_R(m_i)$ for any family $\{m_i\mid i\in \Omega\}\subseteq
  R$.  As it is obvious that from $l_R(m)=l_R(m')$ follows
  $l_R(rm)=l_R(rm')$ for all $r\in R$, defining the relation $x\sim y$
  iff $l_R(x)=l_R(y)$ for $x,y\in M$, one gets a complete congruence
  on $M$ which, since $M$ is simple, coincides with the identity one
  $\vartriangle _M$.  In particular, from the latter one gets that for
  all $x,y\in M$,
  \begin{equation*}
    x\le y\ \Leftrightarrow\  x+y=y \ \Leftrightarrow\ 
    l_R(y)=l_R(x+y)=l_R(x)\cap l_R(y)\ \Leftrightarrow\ 
    l_R(y)\subseteq l_R(x)\:.
  \end{equation*}

  Now let $\infty := \sum_{m\in M}m$, and $a\in M$ such that
  $a\ne\infty$.  If $x\in M$ and $x\nleq a$, then $l_R(a)\nsubseteq
  l_R(x)$, and therefore, there is the obvious, nonzero,
  $R$-semimodule homomorphism $l_R(a)\longrightarrow M$ defined by
  $r\mapsto rx$.  For $M$ is simple, this homomorphism is surjective,
  and, hence, there exists $r_x\in l_R(a)$ such that $r_xx=\infty$.
  Letting $s:=\sum_{x\nleq a}r_x\in l_R(a)$, for any $x\in M$ we
  have
  \begin{equation*}
    sx = \begin{cases}
      s(x+a)=sx+sa=sa=0 &\text{ if }x\le a\ (x+a=a)\:,\\
      \infty &\text{ otherwise}\:. 
    \end{cases}
  \end{equation*}

  Since, by Lemma~4.1, $R\infty=M$, there exists $r\in R$ such that
  $r\infty=b$ for any $b\in M$.  From the latter, for any $x\in M$, we
  have $(rs)x = r(sx) = r0 = 0$ if $x\le a$, and $(rs)x = r(sx) =
  r\infty = b$ if $x\nleq a$, \textit{i.e.},
  \begin{equation*}
    (rs)x = \begin{cases}
      r(sx) = r0 = 0 &\text{ if }x\le a\:,\\
      r(sx) = r\infty = b &\text{ otherwise}\:.
    \end{cases}
  \end{equation*}

  Thus, for any $a,b\in M$, there exists $t\in R$ such that
  $tx=e_{a,b}(x)$ for all $x\in M$, and therefore, $R$ is a dense
  complete subhemiring of ${\rm CEnd}_{\mathbf{B}}(M)$.

  (iii) $\Longrightarrow$ (i).  This follows from Corollary 3.5.
\end{proof}

As a corollary of Theorem~4.6, we obtain the following description of left
artinian congruence-simple complete semirings.

\begin{corollary}
  A left artinian complete hemiring $R$ is congruence-simple iff
  $|R|\le 2$, or it is isomorphic to a dense complete subhemiring
  $S\subseteq {\rm CEnd}_{\mathbf{B}}(M)$ of the complete endomorphism
  hemiring ${\rm CEnd}_{ \mathbf{B}}(M)$ of a nonzero complete
  noetherian lattice $M$.
\end{corollary}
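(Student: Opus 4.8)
The plan is to read this corollary off from Theorem~4.6, which already classifies the congruence-simple complete hemirings; the only genuinely new content is the correspondence between the left artinianness of $R$ and the noetherianness of the complete lattice $M$ occurring in the conclusion. Accordingly I would first dispose of the easy implication and then concentrate on extracting the ascending chain condition for $M$.

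For the implication ``$\Leftarrow$'', suppose $|R|\le 2$, or $R$ is isomorphic to a dense complete subhemiring of ${\rm CEnd}_{\mathbf{B}}(M)$ for a nonzero complete \emph{noetherian} lattice $M$. A noetherian complete lattice is in particular a complete lattice, so $R$ satisfies condition~(iii) of Theorem~4.6, and hence $R$ is congruence-simple; note that here neither the noetherianness of $M$ nor the left artinianness of $R$ is actually used, so this direction is immediate.

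For the implication ``$\Rightarrow$'', assume $R$ is left artinian and congruence-simple with $|R|>2$. Then $R\,R\ne 0$ (otherwise, as in the proof of Theorem~4.6, $R\cong\mathbf{2}$ and $|R|=2$), so by Proposition~4.5 and Theorem~4.6 there is a nonzero simple complete left $dR$-semimodule $M$ which is a complete lattice under $\bigvee_{i\in I}m_i:=\sum_{i\in I}m_i$, together with a dense complete embedding of $R$ into ${\rm CEnd}_{\mathbf{B}}(M)$. It remains only to show that this witnessing $M$ is noetherian. To this end I would reuse the annihilator $l_R(x):=\{\,r\in R\mid rx=0\,\}$ from the proof of Theorem~4.6: there it is established that each $l_R(x)$ is a complete left ideal of $R$, that $l_R\big(\sum_{i\in\Omega}m_i\big)=\bigcap_{i\in\Omega}l_R(m_i)$, and, since $M$ is simple (so that the complete congruence $x\sim y\Leftrightarrow l_R(x)=l_R(y)$ collapses to $\vartriangle_M$), that $l_R$ is injective and order-reversing with $x\le y\Leftrightarrow l_R(y)\subseteq l_R(x)$.

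The key step is then a chain argument. Given an ascending chain $x_1\le x_2\le\cdots$ in $M$, the order-reversing property of $l_R$ yields a descending chain $l_R(x_1)\supseteq l_R(x_2)\supseteq\cdots$ of left ideals of $R$; left artinianness forces it to stabilize, and the injectivity of $l_R$ then forces the original chain in $M$ to stabilize, so that $M$ satisfies the ascending chain condition, i.e.\ is a complete noetherian lattice, as required. The main obstacle here is mostly bookkeeping rather than depth: I must make sure the $l_R(x_i)$ are genuine left ideals so that the descending chain condition coming from left artinianness applies to them (which it does, a complete left ideal being in particular a left ideal), and that the $M$ produced by Proposition~4.5 is precisely a lattice witnessing the conclusion.
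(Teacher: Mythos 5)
Your proposal is correct and takes essentially the same approach as the paper: both directions reduce to Theorem~4.6, and the noetherianness of $M$ is obtained by transferring the descending chain condition on left ideals of $R$ to the ascending chain condition on $M$ via the annihilators $l_R(m)$. The only cosmetic difference is that the paper witnesses strictness of the ideal chain directly with the dense elements $e_{m_n,m_n}\in R$, whereas you deduce stabilization and then use the injectivity of $l_R$ coming from the simpleness of $M$; both devices are legitimate.
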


\begin{proof}
  For a nonzero complete lattice $M$, the statement immediately
  follows from Theorem~4.6 and the following observation: an
  increasing $m_1 < m_2 < \dots <m_k < \cdots$ implies
  $l_R(m_1)\supset l_R(m_2) \supset \dots \supset l_R(m_k) \supset
  \dots$ for any elements $m_1, m_2, \dots, m_k, \dots$ in $M$, for
  $e_{m_n,m_n}(m_n)=0$ and $e_{m_n,m_n}(m_{n+1})=m_n\ne 0$ for any
  $n=1,2,3,\dots$, \textit{i.e.}, $ e_{m_n,m_n}\in l_R(m_n)$ but
  $e_{m_n,m_n}\notin l_R(m_{n+1})$.
\end{proof}

The following remark is almost obvious.

\begin{remark}
  If $R$ is a finite additively idempotent hemiring, then it is a
  complete hemiring.  Indeed, $R$ is a partially ordered semiring with
  the unique partial order on $R$ defined by $r\le r'$ iff $r+r'=r'$
  (see, for example, \cite{ka:olics} and \cite[Exercise
  5.4]{kusalo:saal} for details).  With respect to this order, $ R $
  is a complete lattice and hemiring with $r\vee r':=r+r'$ and
  $r\wedge r':=\sum_{s\le r,s\le r'}s$ for all $r,r'\in R$.
\end{remark}

Taking into consideration this remark, we finish this section with another
corollary of Theorem~4.6, presenting an alternative proof of Theorem~2.2.

\begin{corollary}[{\cite[Theorem~1.7]{zumbr:cofcsswz}}]
  A proper finite hemiring $R$ is congruence-simple iff $|R|\le 2$, or
  $R$ is isomorphic to a dense subhemiring $S\subseteq\mathbf{E}_M$ of
  the endomorphism hemiring $\mathbf{E}_M$ of a finite semilattice
  $(M,+,0)$.
\end{corollary}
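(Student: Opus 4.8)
The plan is to derive this finite-hemiring result as a special case of Theorem~4.6 by invoking Remark~4.8. The key observation is that for a \emph{finite} hemiring, congruence-simpleness and cc-simpleness collapse into one notion, and the ambient complete endomorphism semiring reduces to the ordinary one. So I would first argue that the completeness machinery of Theorem~4.6 is automatically available in the finite setting, then transport the structural conclusion back to the language of $\mathbf{E}_M$.

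First I would establish that a proper finite congruence-simple hemiring $R$ is additively idempotent. By \cite[Proposition~3.1]{zumbr:cofcsswz} (quoted in the Preliminaries), congruence-simpleness of a \emph{proper} hemiring forces $(R,+,0)$ to be an idempotent monoid, i.e.\ a semilattice. Hence, by Remark~4.8, $R$ is in fact a \emph{complete} hemiring, with the infinite summation given by the lattice join $\bigvee_{i\in I}r_i = \sum_{i\in I}r_i$. This is the crucial bridge: it lets me regard the finite $R$ as an object to which Theorem~4.6 applies.

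Next I would verify that, for a finite hemiring, the ordinary congruences and the \emph{complete} congruences coincide, so that congruence-simpleness is equivalent to cc-simpleness. Indeed, completeness of a congruence $\rho$ is a condition about preservation under arbitrary sums $\sum_{i\in\Omega}m_i$, but on a finite additively idempotent hemiring every such sum reduces to a finite join, and any congruence on $R$ already respects finite $+$; thus every congruence is automatically complete. Consequently $R$ is congruence-simple iff $R$ is cc-simple, and I may feed $R$ into the equivalence (i)$\Leftrightarrow$(iii) of Theorem~4.6.

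Finally I would translate the conclusion of Theorem~4.6 back to the finite-semilattice statement. Theorem~4.6 gives either $|R|\le 2$ or an isomorphism onto a dense \emph{complete} subhemiring $S\subseteq{\rm CEnd}_{\mathbf{B}}(M)$ of a nonzero complete lattice $M$. The dense subhemiring $S$ contains all the $e_{a,b}$ and hence, via Lemma~3.4 together with finiteness, the full $\mathbf{F}_M$; since $M$ must itself be finite (as $R$ is finite and $R\cong S$ determines $M$ up to the relevant structure), the complete lattice $M$ is just a finite semilattice, ${\rm CEnd}_{\mathbf{B}}(M) = \mathbf{E}_M$, and completeness of the homomorphisms is vacuous. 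The main obstacle I anticipate is precisely this last descent: one must argue cleanly that the abstractly produced complete lattice $M$ can be taken finite, rather than merely complete, so that ``complete dense subhemiring of ${\rm CEnd}_{\mathbf{B}}(M)$'' genuinely coincides with ``dense subhemiring of $\mathbf{E}_M$'' for a finite semilattice. Controlling the cardinality of $M$ in terms of that of $R$ is where the real care is needed.
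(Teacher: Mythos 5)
Your route is the same as the paper's: use \cite[Proposition~3.1]{zumbr:cofcsswz} to get additive idempotency, Remark~4.8 to see that the finite additively idempotent hemiring $R$ is complete, and then apply Theorem~4.6 and descend from the complete statement to the finite one. (Your intermediate verification that ordinary and complete congruences coincide on a finite hemiring is harmless but redundant: Theorem~4.6 already asserts the equivalence of congruence-simpleness and cc-simpleness for complete hemirings, so the implication (i)~$\Longrightarrow$~(iii) can be invoked directly.) The problem is that the one step you yourself flag as ``where the real care is needed'' --- showing that the complete lattice $M$ produced by Theorem~4.6 is finite --- is never actually carried out. Your parenthetical justification, that ``$R$ is finite and $R\cong S$ determines $M$ up to the relevant structure,'' is an assertion, not an argument: nothing in your text bounds $|M|$ in terms of $|R|$, and since $S$ is only a \emph{dense subhemiring} of ${\rm CEnd}_{\mathbf{B}}(M)$, not all of it, finiteness of $S$ does not transfer to $M$ without a further idea. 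As written, the proof is incomplete exactly at its crux.

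The paper closes this gap with Proposition~4.2 and Lemma~4.1: since $S$ is a dense complete subhemiring of ${\rm CEnd}_{\mathbf{B}}(M)\subseteq\mathbf{E}_M$, Proposition~4.2 makes $M$ a simple complete left $S$-semimodule, hence s-simple, and Lemma~4.1 then gives $M=Sm$ for any nonzero $m\in M$; consequently $|M|\le |S|=|R|<\infty$, so $M$ is a finite lattice. (An alternative, more pedestrian fix: for $a\ne\bigvee_{x\in M}x$ and $b\ne 0$ the endomorphism $e_{a,b}$ determines the pair $(a,b)$, so density of the finite $S$ already forces $M$ to be finite.) Note also that you do not need the equality ${\rm CEnd}_{\mathbf{B}}(M)=\mathbf{E}_M$: once $M$ is finite, it suffices that ${\rm CEnd}_{\mathbf{B}}(M)$ is a subhemiring of $\mathbf{E}_M$, so that $S$, containing all the $e_{a,b}$, is a dense subhemiring of $\mathbf{E}_M$, which is what the statement asks for. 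With the finiteness argument supplied, your proof coincides with the paper's.
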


\begin{proof}
  $\Longrightarrow$.  By \cite[Proposition 3.1]{zumbr:cofcsswz}, $R$
  is additively idempotent, therefore, from Theorem~4.6 and
  Remark~4.8, one has that $|R|\le 2$, or $R$ is isomorphic to a dense
  complete subsemiring $S\subseteq {\rm CEnd}_{\mathbf{B}}(M)$ of the
  complete endomorphism semiring ${\rm CEnd}_{\mathbf{B}}(M)$ of a
  nonzero complete lattice $M$.  For, by Proposition~4.2, $M$ is a
  simple complete left $R$-semimodule, for any nonzero element $m\in
  M$, by Lemma~4.1, $M=Rm$, and hence, $M$ is a finite lattice.  Since
  ${\rm CEnd}_{\mathbf{B}}(M)$ is a subhemiring of $\mathbf{E}_M$,
  $S$ is a dense subhemiring of $\mathbf{E}_M$, too.

  $\Longleftarrow$.  This follows from Theorem~4.6.
\end{proof}

\section{Simpleness and Morita equivalence of semirings}

As the next observation shows, the subclass of simple semirings plays
a very special role in the class of ideal-simple semirings.  But first
recall \cite[p.\,122]{golan:sata} that a surjective homomorphism of
semirings $ f:R\longrightarrow S$ is a \textit{semiisomorphism} iff
${\rm Ker}(f)=\{0\}$; and the semiisomorphism $f$ is a \textit{strong
  semiisomorphism} if for any proper ideal $I$ of $R$, the ideal
$f(I)$ of $S$ is also proper.

\begin{proposition}
  A semiring $R$ is ideal-simple iff $R$ is a simple ring,
  or there exists a strong semiisomorphism from $R$ onto an
  additively idempotent simple semiring $S$.
\end{proposition}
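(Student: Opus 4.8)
The plan is to prove the equivalence by analysing the additive structure of an ideal-simple semiring $R$.  The key dichotomy comes from considering the congruence generated additively: since $R$ is a semiring, its additive reduct $(R,+,0)$ is a commutative monoid, and I would focus on the element behaviour of sums.  First I would observe that for an ideal-simple semiring, either $R$ is a ring (the additive monoid admits inverses) or not; the natural way to separate these is to look at the subset of additively cancellable elements or, more directly, at the relation measuring the ``additive idempotent collapse''.  Specifically, I would introduce the congruence $\sim$ defined by $a \sim b$ iff $a + c = b + c$ for some common ``absorbing'' behaviour, and identify the canonical additively idempotent quotient.

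\medskip

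The core of the argument is the following: every commutative monoid $(R,+,0)$ maps canonically onto its maximal additively idempotent quotient, obtained by imposing $x + x = x$.  Concretely, define on $R$ the relation $a \equiv b$ iff $na = mb$ and $pb = qa$ hold for suitable positive integers (mimicking the $\preceq$ construction used in the proof of Proposition~4.3), and check that $\equiv$ is a semiring congruence.  The quotient $S := R/{\equiv}$ is then additively idempotent, and the projection $f : R \longrightarrow S$ is a surjective semiring homomorphism.  I would then split into two cases according to whether $\mathrm{Ker}(f) = \{0\}$ or not.  If $\equiv$ is trivial, then $R$ is itself additively idempotent and one takes $S = R$ with $f = \mathrm{id}$, which is trivially a strong semiisomorphism onto the additively idempotent simple semiring $R$.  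If $\equiv$ is nontrivial, the kernel congruence forces, via ideal-simpleness, that $R$ has additive inverses, so $R$ is a ring; and an ideal-simple ring is precisely a simple ring, since for rings the two-sided ideals and the congruences are in bijective correspondence.

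\medskip

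For the transfer of simpleness I would argue that $f$ being a \emph{strong} semiisomorphism guarantees that the ideal lattices of $R$ and $S$ correspond so that ideal-simpleness of $R$ yields ideal-simpleness of $S$, while the additive idempotency of $S$ together with ideal-simpleness promotes $S$ to being \emph{simple} (ideal- and congruence-simple simultaneously); here one invokes the earlier structural results, in particular the fact that for additively idempotent semirings congruence-simpleness is controlled by the endomorphism-semiring description of Theorem~2.2 and Corollary~4.7.  Conversely, if $R$ is a simple ring it is evidently ideal-simple, and if there is a strong semiisomorphism onto an additively idempotent simple $S$, then pulling back ideals along $f$ and using the strongness shows $R$ has no nontrivial ideals.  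The main obstacle I anticipate is verifying that the canonical congruence $\equiv$ is genuinely a \emph{strong} semiisomorphism rather than merely a semiisomorphism: one must show that the preimage of a proper ideal of $S$ is a proper ideal of $R$, which requires a careful check that no proper ideal of $R$ gets ``inflated'' to all of $S$ under $f$.  I would handle this by exploiting that $\mathrm{Ker}(f)$ consists exactly of elements additively equivalent to $0$ and that ideal-simpleness prevents any intermediate ideal from absorbing $1$, so that properness is preserved in both directions.
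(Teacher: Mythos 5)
Your plan has a genuine gap at its central step. You build the maximal additively idempotent quotient $S=R/{\equiv}$ (or take $S=R$ when $R$ is already additively idempotent) and then assert that ideal-simpleness together with additive idempotency ``promotes $S$ to being simple,'' invoking Theorem~2.2 and Corollary~4.7. This implication is false, and those results do not supply it: they characterize which hemirings \emph{are} congruence-simple, they do not say that ideal-simple additively idempotent semirings are among them. A concrete counterexample sits inside this very paper: by Theorem~6.4 and Remark~6.6, any division aic-semiring, e.g.\ the tropical semifield $(\mathbb{R}\cup\{-\infty\},\max,+)$, is ideal-simple and additively idempotent, yet by Corollary~6.5 it is simple only if it is $\mathbf{B}$; for instance the congruence identifying all elements $\ne -\infty$ is a nontrivial proper congruence. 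For such an $R$ your recipe outputs $S=R$ and $f=\mathrm{id}$, which is \emph{not} a strong semiisomorphism onto a simple semiring, so the forward direction collapses exactly where congruence-simpleness is needed. This is why the paper's proof does not use the additively idempotent collapse at all: it takes $\rho$ to be a \emph{maximal} congruence on $R$ not containing $(1,0)$, so that $R/\rho$ is congruence-simple by construction; ideal-simpleness of $R$ then forces $\mathrm{Ker}(\pi)=0$ and transfers ideal-simpleness to $R/\rho$, and Zumbr\"agel's dichotomy (\cite[Proposition~3.1]{zumbr:cofcsswz}) splits the quotient into a simple ring (in which case $\pi$ is an isomorphism) or an additively idempotent simple semiring (in which case $\pi$ is the desired strong semiisomorphism). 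Congruence-simpleness comes from maximality of $\rho$, not from any structural feature of additive idempotency.

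A secondary flaw: you conflate ``$\mathrm{Ker}(f)=\{0\}$'' with ``$\equiv$ is the diagonal.'' These differ: for $R=\mathbb{R}^{+}$ the kernel of the collapse map is $\{0\}$ while $\equiv$ identifies all positive elements, so in your ``$\equiv$ nontrivial'' branch the conclusion ``$R$ has additive inverses, hence is a ring'' is simply wrong for $\mathbb{R}^{+}$. The dichotomy that does work is on the kernel: $\mathrm{Ker}(f)$ equals the ideal $V(R)$ of additively invertible elements, so ideal-simpleness gives either $V(R)=R$ (then $R$ is a ring, and an ideal-simple ring with identity is simple) or $V(R)=0$. But in the latter case you are back to needing congruence-simpleness of $S$, which your construction cannot provide; repairing this essentially forces you back to the paper's maximal-congruence argument. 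Your converse direction (strongness plus $\mathrm{Ker}(f)=\{0\}$ pulls ideal-simpleness back to $R$) is fine and agrees with the paper.
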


\begin{proof}
  $\Longrightarrow$.  Let $\rho$ be a maximal congruence on $R$, which
  by Zorn's lemma, of course, always exists and does not contain the
  pair $(1,0)$.  Then, the factor semiring $R/\rho$ is
  congruence-simple and, since $R$ is ideal-simple, ideal-simple as
  well, \textit{i.e.}, it is simple.  For the natural surjection
  $\pi:R\longrightarrow R/\rho $, since $\pi(1_R)\notin {\rm Ker}(\pi)$, the
  ideal-simpleness of $R$ implies that ${\rm Ker}(\pi)=0$; and from
  \cite[Proposition 3.1]{zumbr:cofcsswz} and the simpleness of
  $R/\rho$, we have that $R/\rho$ is either a simple ring or an
  additively idempotent simple semiring.

  If $R/\rho$ is a simple ring, $\pi$ is an isomorphism.  Indeed, from
  the equation $\pi(a)=\pi(b)$ for some $a,b\in R$, it follows that
  there exists $c\in R$ such that $\pi(c)=-\pi (b)$ and $\pi
  (a+c)=\pi(a)+\pi(c)=\pi(b)-\pi(b)=0$, and hence, $a+c=0$ since
  ${\rm Ker}(\pi)=0$; so, for $c+b\in {\rm Ker}(\pi)$ and hence $c+b=0$ too, one
  has that $b=(a+c)+b=a+(c+b)=a$.  From the latter, we obtain that
  $\pi$ is injective and $R$ is isomorphic to the simple ring
  $R/\rho$.

  In the case when $R/\rho$ is an additively idempotent simple
  semiring, it is obvious that the semiisomorphism $\pi$ is a strong
  one.

  $\Longleftarrow$.  Taking into consideration that semiisomorphisms
  preserve ideals, this implication becomes obvious.
\end{proof}

Recall (see \cite{kat:thcos} and \cite{kn:meahcos}) that two semirings
$R$ and $S$ are said to be \textit{Morita equivalent} if the
semimodule categories $_R\mathcal{M}$ and $_{S}\mathcal{M}$ are
equivalent categories; \textit{i.e.}, there exist two (additive)
functors $F: {}_R\mathcal{M}\longrightarrow {}_{S}\mathcal{M}$ and $G:
{}_{S}\mathcal{M} \longrightarrow {}_R\mathcal{M}$, and natural
isomorphisms $\eta: GF\longrightarrow {\rm Id}_{_R\mathcal{M}}$ and
$\xi: FG\longrightarrow {\rm Id}_{_S\mathcal{M}}$.  By
\cite[Theorem~4.12]{kn:meahcos}, two semirings $R$ and $S$ are Morita
equivalent if and only if the semimodule categories $\mathcal{M}_R$
and $\mathcal{M}_S$ are equivalent categories, too.  A left semimodule
$_RP\in |_R\mathcal{M}|$ is said to be a \textit{generator} for the
category of left semimodules $_R\mathcal{M}$ if the regular semimodule
$_RR\in |_R\mathcal{M}|$ is a retract of a finite direct sum
$\bigoplus_iP$ of the semimodule $_RP$; and a left semimodule $_RP\in
|_R\mathcal{M}|$ is said to be a \textit{progenerator} for the
category of left semimodules $_R\mathcal{M}$ if it is a finitely
generated projective generator for $_R\mathcal{M}$.  By
\cite[Proposition 3.9]{kn:meahcos}, a left semimodule $_RP\in
|_R\mathcal{M}|$ is a generator for the category of left semimodules
$_R\mathcal{M}$ iff the trace ideal ${\rm tr}(P) :=
\sum_{f\in{}_R\mathcal{M}(_RP,_RR)}f(P)=R$.  Also by
\cite[Theorem~4.12]{kn:meahcos}, two semirings $R$ and $S$ are Morita
equivalent iff there exists a progenerator $_RP\in |_R\mathcal{M}|$
for $ _R\mathcal{M}$ such that the semirings $S$ and the endomorphism
semiring ${\rm End}(_RP)$ of the semimodule $_RP$ are isomorphic.
Moreover, the following observation is true.

\begin{proposition}[{cf.~\cite[Proposition 18.33]{lam:lomar}}]
  For semirings $R$ and $S$, the following statements are equivalent:
  \begin{enumerate}[\ \ (i)\ ]
  \item $R$ is Morita equivalent to $S$;
  \item $S\cong eM_n(R)e$ for some idempotent $e$ in a matrix
    semiring $M_n(R)$ \textit{such that} $M_n(R)eM_n(R)=M_n(R)$.
  \end{enumerate}
\end{proposition}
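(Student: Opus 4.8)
The plan is to transcribe the classical ring-theoretic argument (cf.\ the cited \cite[Proposition~18.33]{lam:lomar}) into the semiring setting, replacing the Morita machinery for rings by the semiring results already quoted: the progenerator characterization of Morita equivalence \cite[Theorem~4.12]{kn:meahcos}, namely that $R$ and $S$ are Morita equivalent iff $S\cong{\rm End}({}_RP)$ for some progenerator ${}_RP$, together with the trace-ideal criterion \cite[Proposition~3.9]{kn:meahcos} that ${}_RP$ is a generator iff ${\rm tr}(P)=R$. Throughout I use the identification ${\rm End}({}_RR^n)\cong M_n(R)$, which is valid for semimodules over a semiring since it is ordinary ``matrix'' linear algebra and needs no subtraction.

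For the implication (i)~$\Longrightarrow$~(ii), assume $R$ is Morita equivalent to $S$ and, by \cite[Theorem~4.12]{kn:meahcos}, choose a progenerator ${}_RP$ with $S\cong{\rm End}({}_RP)$. Since $P$ is finitely generated projective it is a retract of a finite free semimodule, so there are $R$-homomorphisms $\iota\colon P\to R^n$ and $\pi\colon R^n\to P$ with $\pi\iota={\rm id}_P$. Put $e:=\iota\pi\in E:={\rm End}({}_RR^n)\cong M_n(R)$; then $e^2=\iota(\pi\iota)\pi=e$. The assignments $f\mapsto\iota f\pi$ and $g\mapsto\pi g\iota$ are mutually inverse semiring isomorphisms between ${\rm End}({}_RP)$ and the corner $eEe=e\,M_n(R)\,e$, the verification using only $\pi\iota={\rm id}_P$ and distributivity (no cancellation). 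Hence $S\cong e\,M_n(R)\,e$, and it remains to produce the fullness $M_n(R)\,e\,M_n(R)=M_n(R)$; this is where the generator hypothesis is consumed and constitutes the crux described below.

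For (ii)~$\Longrightarrow$~(i), given an idempotent $e\in A:=M_n(R)$ with $AeA=A$, set $P:=e\,R^n\subseteq R^n$, the image of the idempotent endomorphism $e$. As such an image $P$ is a retract of $R^n$, hence a finitely generated projective left $R$-semimodule, and the same corner isomorphism gives ${\rm End}({}_RP)\cong eAe=e\,M_n(R)\,e\cong S$. The fullness $AeA=A$ will furnish that $P$ is a generator (again the crux), so ${}_RP$ is a progenerator and \cite[Theorem~4.12]{kn:meahcos} yields that $R$ is Morita equivalent to ${\rm End}({}_RP)\cong S$. A convenient alternative packaging is to note that $R$ is Morita equivalent to $M_n(R)$ via the obvious progenerator ${}_RR^n$, that ${}_AAe$ is a progenerator of ${}_A\mathcal{M}$ precisely when $AeA=A$, and then to invoke transitivity of Morita equivalence.

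The single nontrivial point, shared by both directions, is the equivalence \emph{``${}_RP$ is a generator $\Longleftrightarrow M_n(R)\,e\,M_n(R)=M_n(R)$''}. By \cite[Proposition~3.9]{kn:meahcos} the left-hand side reads ${\rm tr}({}_RP)=R$, so the task is to compute the trace ideal of $e\,R^n$ in the style of the classical identity ${\rm tr}(Ae)=AeA$: one identifies ${\rm Hom}_R(e\,R^n,R)$ with the appropriate $e$-multiples (rows/columns) of $M_n(R)$, so that the accumulated sum $\sum f(P)$ realizes exactly the entries of $M_n(R)\,e\,M_n(R)$, whence ${\rm tr}({}_RP)=R$ iff the whole matrix semiring is reached. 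I expect this to be the main obstacle: it must be carried out purely additively (sums of products, no cancellation), the left/right module conventions must be kept consistent with the chosen identification ${\rm End}({}_RR^n)\cong M_n(R)$, and one must check that the correspondence between two-sided ideals of $R$ and of $M_n(R)$ behaves as in the ring case. The cleanest way around the bookkeeping is to use that the equivalence ${}_R\mathcal{M}\simeq{}_A\mathcal{M}$ induced by ${}_RR^n$ carries ${}_RP=e\,R^n$ to ${}_AAe$ and preserves generators, reducing the claim to the elementary computation ${\rm tr}({}_AAe)=AeA$.
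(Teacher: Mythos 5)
Your proposal follows essentially the same route as the paper's proof: both directions rest on the progenerator characterization of Morita equivalence \cite[Theorem~4.12]{kn:meahcos}, the corner isomorphism ${\rm End}({}_RP)\cong eM_n(R)e$, and the trace-ideal identity ${\rm tr}(P)=R \Leftrightarrow M_n(R)eM_n(R)=M_n(R)$ via \cite[Proposition~3.9]{kn:meahcos} (the paper makes this explicit as $M_n(R)eM_n(R)=M_n({\rm tr}(P))$ with ${\rm tr}(P)=\sum Ra_{ij}R$), and your ``alternative packaging'' of (ii)~$\Longrightarrow$~(i) -- through the progenerator ${}_AAe$, the equivalence of $R$ with $M_n(R)$, and transitivity -- is exactly the paper's argument. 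The crux you flag (transferring ${\rm tr}(Ae)=AeA$ to the semiring setting purely additively) is treated by the paper at the same level of detail, namely by citing the classical ring computation and noting it carries over.
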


\begin{proof}
  (i) $\Longrightarrow$ (ii).  Assume $R$ is Morita equivalent to $S$.
  By \cite[Definition~4.1 and Theorem~4.12]{kn:meahcos}, there exists
  a progenerator $_RP\in |_R\mathcal{M}|$ for $_R\mathcal{M}$ such
  that $S\cong{\rm End}(_RP)$ as semirings.  Applying
  \cite[Proposition~3.1]{kn:meahcos} and without loss of generality,
  we can assume that the semimodule $_RP$ is a subsemimodule of a free
  semimodule $_RR^n$, and there exists an endomorphism $e\in {\rm
    End}(_RR^n)$ such that $e^2=e$, $ P=e(R^n)$ and $e|_{P}={\rm
    id}_{P}$.  Since $e\in{\rm End}(_RR^n)\cong M_n(R)$, one can
  consider the action of $e$ on $_RR^n$ as a right multiplication by
  some idempotent matrix $(a_{ij})\in M_n(R)$.  In the same fashion as
  it has been done in the case of the modules over rings (see, for
  example, \cite[Remark~18.10\,(D) and Exercise 2.18]{lam:lomar}), one
  may also show that ${\rm tr}(P)=\sum Ra_{ij}R$ and $rE_{ij}eE_{kl}r'
  = ra_{jk}r'E_{il}$, where $\{E_{ij}\}$ are the matrix units in
  $M_n(R)$ and $r,r'\in R$, and obtain that $M_n(R)eM_n(R)=M_n({\rm
    tr}(P))$, in the semimodule setting.  Since $_RP$ is a
  progenerator of the category of semimodules $_R\mathcal{M}$ and
  \cite[Proposition 3.9]{kn:meahcos}, ${\rm tr}(P)=R$, and hence $
  M_n(R)eM_n(R)=M_n(R)$.  We complete the proof by noting that the
  semiring homomorphism
  \begin{equation*}
    \theta: {\rm End}(_RP)\longrightarrow e{\rm End}(_RR^n)e
    \cong eM_n(R)e,
  \end{equation*}
  defined for all $f\in{\rm End}(_RP)$ by $\theta (f)=eife$ with
  $i:P\rightarrowtail R^n$ to be the natural embedding, is a semiring
  isomorphism.\smallskip

  (ii) $\Longrightarrow$ (i).  Let $S\cong eM_n(R)e$ for some
  idempotent $e$ in a matrix semiring $M_n(R)$, and $M_n(R)eM_n(R) =
  M_n(R)$.  Applying the obvious semimodule modifications of the
  well-known results for modules over rings (see, for example,
  \cite[Proposition~21.6 and Corollary~21.7]{lam:afcinr}), we have
  $S\cong eM_n(R)e\cong {\rm End}(_{M_n(R)}M_n(R)e)$.  Then, using
  that by \cite[Corollary~3.3]{kn:meahcos} $M_n(R)e$ is the projective
  left $M_n(R)$-semimodule generated by the idempotent $e$ and the
  fact mentioned in (i) that ${\rm tr}(P) = \sum Ra_{ij}R$ for any
  finitely generated projective $R$-semimodule $P$, we have that ${\rm
    tr}(_{M_n(R)}M_n(R)e) = M_n(R)eM_n(R) = M_n(R)$, and, hence, the
  semimodule $_{M_n(R)}M_n(R)e$ is a progenerator of the category of
  semimodules $_{M_n(R)}\mathcal{M}$.  From these observations and
  \cite[Definition~4.1 and Theorem~4.12]{kn:meahcos}, it follows that
  the semirings $S$ and $M_n(R)$ are Morita equivalent.  Finally,
  using the facts that by \cite[Theorem~5.14]{kat:thcos} the semirings
  $M_n(R)$ and~$R$ are also Morita equivalent, and by
  \cite[Corollary~4.4]{kn:meahcos} the Morita equivalence relation on
  the category of semirings is an equivalence relation, we conclude
  the proof.
\end{proof}

This result, in particular, motivates us to consider more carefully
the relationships between the ideal and congruence structures of
semirings $R$ and $eRe$ corresponding to idempotents $e\in R$.  So, in
the following observation, which will prove to be useful and is
interesting on its own, we consider these relationships.  Inheriting
the ring terminology (see, for example, \cite[p.\,485]{lam:lomar}), we
say that an idempotent $e\in R$ of a semiring $R$ is \textit{full} if
$ReR=R$.

\begin{proposition}
  Let $e$ be an idempotent in the semiring $R$.
  \begin{enumerate}[\ \ (i)\ ]
  \item Let $I$ be an ideal in the semiring $eRe$.  Then $e(RIR)e=I$.
    In particular, $I\longmapsto RIR$ defines an injective
    (inclusion-preserving) map from ideals of $eRe$ to those of $R$.
    This map respects multiplication of ideals, and is surjective if
    $e$ is a full idempotent.
  \item Let $\Gamma$ be a congruence on the semiring $eRe$.  Then,
    the relation $\Theta$ on $R$ for all $a,b\in R$ defined by
    \begin{equation*}
      (a,b)\in \Theta \ \Longleftrightarrow\ 
      \forall r,s\in R: (erase,erbse)\in \Gamma\:,
    \end{equation*}
    is a congruence, and $(eRe)^2\cap\Theta = \Gamma$.  In particular,
    $\Gamma \longmapsto \Theta$ defines an injective
    (inclusion-preserving) map from congruences on $eRe$ to those on
    $R$.  This map is surjective if the idempotent $e$ is
    full.
  \end{enumerate}
\end{proposition}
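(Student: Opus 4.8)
The plan is to mimic the classical ring-theoretic correspondence while scrupulously avoiding any use of additive inverses, since $R$ is only a semiring. Throughout I will use that $e$ is the multiplicative identity of the subsemiring $eRe$ (so every element $i$ of an ideal $I$ of $eRe$ satisfies $i = eie$) and that $RR = R$ and $a = 1\cdot a\cdot 1$, which let me freely insert or absorb the unit $1$ of $R$.

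For (i), I would first establish the identity $e(RIR)e = I$. The inclusion $I \subseteq e(RIR)e$ is immediate from $i = eie = e(1\,i\,1)e$. For the reverse inclusion I take a typical generator $e\,r\,i\,r'\,e$ of $e(RIR)e$ and rewrite it, using $i = eie$, as $(ere)\,i\,(er'e)$; since $ere, er'e \in eRe$ and $I$ absorbs multiplication by $eRe$ on both sides, this lies in $I$. Injectivity and inclusion-preservation of $I \mapsto RIR$ then follow formally from $e(RIR)e = I$. To see that the map respects products I would expand $(RIR)(RJR) = RIRJR$ and, for a middle factor $i\,s\,j$, rewrite it as $e\bigl(i\,(ese)\,j\bigr)e \in IJ$ using that $I$ is a right and $J$ a left ideal of $eRe$; conversely $r(ij)r' = (ri)(jr')$ exhibits $R(IJ)R \subseteq (RIR)(RJR)$. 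For surjectivity when $ReR = R$, given an ideal $J$ of $R$ I would take $I := eJe$ and verify $R(eJe)R = J$ through the sandwich $J = RJR = (ReR)J(ReR) \subseteq R(eJe)R \subseteq J$.

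For (ii), the crucial device is the very definition of $\Theta$: because membership $(a,b)\in\Theta$ is a universally quantified condition over all $r,s \in R$, compatibility of $\Theta$ with multiplication comes essentially for free --- multiplying $a$ on the left by $c$ merely reindexes $r \mapsto rc$, and on the right reindexes $s \mapsto cs$ --- while reflexivity, symmetry, transitivity and additivity are inherited pointwise from $\Gamma$. I would then prove the key identity $(eRe)^2 \cap \Theta = \Gamma$ in two steps. For $\Gamma \subseteq (eRe)^2\cap\Theta$, I take $x,y \in eRe$ with $(x,y)\in\Gamma$ and observe that $erxse = (ere)\,x\,(ese)$ (using $x = exe$), so the pair is related by $\Gamma$ since $\Gamma$ is a congruence on $eRe$. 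For the reverse inclusion I specialize the universal condition to $r = s = e$, where $e\,e\,x\,e\,e = x$, recovering $(x,y)\in\Gamma$ directly. Injectivity and monotonicity of $\Gamma \mapsto \Theta$ are then formal consequences.

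The main obstacle --- and the place where fullness is indispensable --- is surjectivity in (ii). Given a congruence $\Xi$ on $R$, I set $\Gamma := (eRe)^2 \cap \Xi$ and must show that the induced $\Theta$ equals $\Xi$. The inclusion $\Xi \subseteq \Theta$ holds for any $e$. For $\Theta \subseteq \Xi$ I would use $ReR = R$ to write the unit as a finite sum $1 = \sum_k u_k e v_k$; then for $(a,b) \in \Theta$ each pair $(e v_k a u_l e,\, e v_k b u_l e)$ lies in $\Xi$ (this is exactly the defining condition of $\Theta$ with $r = v_k$, $s = u_l$), and multiplying by $u_k$ on the left and $v_l$ on the right and summing over $k,l$ reassembles $\bigl(\sum_{k,l} u_k e v_k a u_l e v_l,\ \ldots\bigr) = (1\cdot a\cdot 1,\ 1\cdot b\cdot 1) = (a,b) \in \Xi$. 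The care required here is purely that every manipulation --- insertion of the unit, absorption into $eRe$, and the reassembly from $1 = \sum u_k e v_k$ --- uses only addition and multiplication and never subtraction, so that the argument stays valid over an arbitrary semiring.
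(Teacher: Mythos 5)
Your proof is correct and follows essentially the same route as the paper's: in (i) the same sandwich identities $e(RIR)e=(eRe)I(eRe)=I$, $RIRI'R=R(II')R$ and $R(eJe)R=(ReR)J(ReR)=J$ (you just verify them element-wise rather than by ideal products), and in (ii) the same key steps of intersecting a given congruence $\Xi$ with $(eRe)^2$ and reassembling $(a,b)$ from the pairs $(ev_kau_le,\,ev_kbu_le)$ via $1=\sum_k u_kev_k$. No gaps; the only cosmetic omission is noting that elements of $RIR$ are finite sums of generators, which is absorbed by additive closure of ideals.
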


\begin{proof}
  (i). The proof given for rings in \cite[Theorem~21.11]{lam:afcinr}
  serves for our semiring setting as well; and just for the reader's
  convenience, we briefly sketch it here.  Namely, if $I$ is an ideal
  in $eRe$, then
  \begin{equation*}
    e(RIR)e = eR(eIe)Re = (eRe)I(eRe) = I \:,
  \end{equation*}
  and, if $I'$ is another ideal of $eRe$, then 
  \begin{equation*}
    (RIR)(RI'R) = RIRI'R = R(Ie)R(eI')R = RI(eRe)I'R = R(II')R \:.
  \end{equation*}
  If the idempotent $e$ is full, then for any ideal $J$ in $R$ and the
  ideal $eJe$ in $eRe$, we have
  \begin{equation*}
    R(eJe)R = Re(RJR)eR = (ReR)J(ReR) = RJR = J \:,
  \end{equation*}
  \textit{i.e.}, the correspondence $I\longmapsto RIR$ is surjective
  in this case.\smallskip

  (ii).  It is easy to see that the relation $\Theta$ on $R$
  corresponding to a congruence $\Gamma$ on $eRe$ is, in fact, a
  congruence on $R$.  And we shall show that $(eRe)^2\cap\Theta =
  \Gamma $.  Indeed, for any $(eae,ebe)\in \Gamma $, we have
  $(er(eae)se,er(ebe)se) = (ere(eae)ese,ere(ebe)ese)\in \Gamma$ for
  all $r,s\in R$, and hence, $\Gamma \subseteq (eRe)^2\cap\Theta$; and
  the opposite inclusion $(eRe)^2\cap\Theta \subseteq \Gamma$ is
  obvious.

  Now let an idempotent $e$ be full.  Then there exist a natural
  number $n\ge 1$ and elements $\alpha_i$, $\beta_i$ in $R$ such that
  $\sum_{i=1}^n\alpha_ie\beta_i=1$.  Let $\Pi$ be a congruence on $R$
  and $\Gamma := (eRe)^2\cap \Pi$ the congruence on $eRe$; and
  $\Theta$ the congruence on $R$ corresponding to $\Gamma$ under the
  map $\Gamma \longmapsto \Theta$.  We shall show that $\Theta = \Pi$.
  Indeed, since $\Pi$ is a congruence on $R$, for any $(a,b)\in \Pi$
  and $r,s\in R$, we always have $(erase, erbse)\in \Pi$, and hence,
  $(a,b)\in \Theta$, \textit{i.e.}, $\Pi \subseteq \Theta$.
  Conversely, for any $(a,b)\in \Theta $ and $r,s\in R$, we have
  $(erase, erbse)\in \Gamma$ and, therefore, $(erase, erbse)\in \Pi$
  for all $r, s\in R$; in particular, for all $i, j = 1, 2, \dots, n$,
  we have $(e\beta _ia\alpha_je, e\beta_ib\alpha_je)\in \Pi$ and,
  since $\Pi$ is a congruence on $R$, $(\alpha_ie\beta_i
  a\alpha_je\beta_j, \alpha_ie\beta_ib\alpha_je\beta_j)\in\Pi$, and
  therefore, $(a,b) = \sum_{j=1}^n\sum_{i=1}^n (\alpha_ie\beta_ia
  \alpha_je\beta_j, \alpha_ie\beta_ib\alpha_je\beta_j)\in\Pi$,
  \textit{i.e.}, $\Theta\subseteq\Pi$.  Thus, for a full idempotent
  $e$, the map $\Gamma \longmapsto \Theta$ is a surjective one.
\end{proof}

From Proposition 5.3 immediately follows

\begin{corollary}
  Let $e$ be a full idempotent in the semiring $R$.  Then, $R$ is
  ideal-simple (congruence-simple) iff the semiring $eRe$ is
  ideal-simple (congruence-simple).  In particular, $R$ is simple iff
  $eRe$ is simple.
\end{corollary}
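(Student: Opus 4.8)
The plan is to read the corollary straight off the two lattice-level bijections furnished by Proposition~5.3, using the elementary reformulations that a semiring is ideal-simple precisely when its lattice of ideals is the two-element chain $\{0,R\}$, and congruence-simple precisely when its lattice of congruences is the two-element chain $\{\vartriangle_R, R^2\}$. Since Proposition~5.3 already hands us inclusion-preserving bijections in both the ideal and the congruence settings whenever $e$ is full, the whole task reduces to checking that these bijections are genuine order-isomorphisms carrying the bottom (resp.\ top) element of one lattice to the bottom (resp.\ top) of the other.

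First, for ideal-simpleness I would use Proposition~5.3(i): when $e$ is full, $I \longmapsto RIR$ is an inclusion-preserving bijection from the ideals of $eRe$ onto the ideals of $R$, injective via $e(RIR)e = I$ and surjective by fullness, with the evidently inclusion-preserving inverse $J \longmapsto eJe$. Hence the two ideal lattices are order-isomorphic. To match ``trivial with trivial'' I would evaluate the two endpoints: $R\,0\,R = 0$, and, using $ReR = R$, also $R(eRe)R = ReReR = (ReR)eR = ReR = R$, so the bottom ideal maps to the bottom and the top to the top. Consequently $R$ has exactly the two ideals $0$ and $R$ iff $eRe$ has exactly the two ideals $0$ and $eRe$, i.e.\ $R$ is ideal-simple iff $eRe$ is.

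For congruence-simpleness I would argue symmetrically with Proposition~5.3(ii): for full $e$, the map $\Gamma \longmapsto \Theta$ is an inclusion-preserving bijection from congruences on $eRe$ onto congruences on $R$, with inclusion-preserving inverse $\Pi \longmapsto (eRe)^2 \cap \Pi$. Again I would compute the images of the two trivial congruences. Taking $\Gamma = (eRe)^2$ gives $\Theta = R^2$, since every pair $(erase, erbse) = (e(ras)e, e(rbs)e)$ already lies in $(eRe)^2$. Taking $\Gamma = \vartriangle_{eRe}$ forces $erase = erbse$ for all $r,s \in R$, whence, exactly as in the surjectivity argument of Proposition~5.3(ii) with $\sum_i \alpha_i e \beta_i = 1$, one recovers $a = \sum_{i,j}\alpha_i e\beta_i a\alpha_j e\beta_j = \sum_{i,j}\alpha_i e\beta_i b\alpha_j e\beta_j = b$, so $\Theta = \vartriangle_R$. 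Thus the congruence lattices are order-isomorphic via a map sending bottom to bottom and top to top, and $R$ is congruence-simple iff $eRe$ is.

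The ``in particular'' clause is then immediate, since simpleness was defined as the conjunction of ideal- and congruence-simpleness. I expect the only point requiring genuine care---the main, though mild, obstacle---to be not the bijections themselves, which are already established in Proposition~5.3, but these endpoint computations verifying that each bijection transports the two trivial ideals (resp.\ congruences) of one semiring onto the two trivial ones of the other; once those are in place, the desired equivalences are forced purely by the order-isomorphism of the respective lattices.
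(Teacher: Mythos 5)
Your proposal is correct and takes essentially the same route as the paper: the paper's entire proof of this corollary is the remark that it ``immediately follows'' from Proposition~5.3, and your argument is exactly that deduction spelled out -- the two inclusion-preserving bijections (for full $e$) between ideals, respectively congruences, of $eRe$ and of $R$, together with the endpoint checks $R(eRe)R=ReReR=R$, $R\,0\,R=0$, $\vartriangle_{eRe}\mapsto\vartriangle_R$ and $(eRe)^2\mapsto R^2$. Nothing is missing; your endpoint verifications are precisely the details the paper leaves implicit.
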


We will use the following important, extending \cite[Lemma~3.1]
{bashkepka:css}, result:

\begin{proposition}[{\cite[Proposition~4.7]{knt:mosssparp}}] 
  The matrix semirings $M_n(R)$, $n\ge 2$, over a semiring $R$ are
  congruence-simple (ideal-simple) iff $R$ is congruence-simple
  (ideal-simple).  In particular, $M_n(R)$ is simple iff $R$ is
  simple.
\end{proposition}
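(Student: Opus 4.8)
The plan is to deduce the whole statement from Corollary~5.4 by exhibiting a convenient \emph{full} idempotent in $M_n(R)$. I would take $e := E_{11}$, the matrix unit with $1_R$ in the $(1,1)$-entry and $0$ elsewhere; this is plainly an idempotent of the semiring $M_n(R)$, whose identity is the identity matrix $\sum_i E_{ii}$. Since Corollary~5.4 asserts that for a full idempotent $e$ in a semiring, that semiring is ideal-simple (congruence-simple, simple) if and only if the corner $eRe$ is, the entire argument reduces to two routine verifications about $E_{11}$.

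First I would check that $E_{11}$ is full, that is, $M_n(R)\,E_{11}\,M_n(R) = M_n(R)$. Using the matrix-unit relations $E_{i1}E_{11}E_{1j} = E_{ij}$, one gets $(rE_{i1})\,E_{11}\,E_{1j} = rE_{ij}$ for every $r\in R$, so each $rE_{ij}$ lies in the two-sided ideal generated by $E_{11}$; as an arbitrary matrix is the finite sum $\sum_{i,j} a_{ij}E_{ij}$, this ideal is all of $M_n(R)$. Second I would identify the corner semiring $E_{11}M_n(R)E_{11}$ with $R$: a direct computation gives $E_{11}\,(a_{ij})\,E_{11} = a_{11}E_{11}$, so the corner is exactly $\{\,rE_{11}\mid r\in R\,\}$, and the map $r\mapsto rE_{11}$ is a semiring isomorphism from $R$ onto $E_{11}M_n(R)E_{11}$ (it respects addition, sends $1_R$ to $E_{11}$, which is the identity of the corner, and satisfies $(rE_{11})(sE_{11}) = rs\,E_{11}$ because $E_{11}^2 = E_{11}$).

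With these two facts in hand, the conclusion is immediate: applying Corollary~5.4 to the semiring $M_n(R)$ and its full idempotent $E_{11}$, we obtain that $M_n(R)$ is ideal-simple (respectively congruence-simple, respectively simple) exactly when $E_{11}M_n(R)E_{11}\cong R$ is, which is the assertion. I do not anticipate any serious obstacle here, since all the substantive work is carried out in Proposition~5.3 and Corollary~5.4; what remains is only the verification that $E_{11}$ is full and that its corner is $R$. The one point deserving a little care is to confirm that the corner isomorphism is a genuine (unital) semiring isomorphism, and to observe that the hypothesis $n\ge 2$ serves merely to discard the trivial case $M_1(R)=R$.
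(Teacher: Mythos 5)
Your proposal is correct, and within this paper's logical order it is non-circular: Proposition~5.3 and its Corollary~5.4 are proved before, and independently of, the present proposition, so you may freely invoke them. But your route is genuinely different from the paper's, for the simple reason that the paper gives no proof at all: the statement is imported verbatim as Proposition~4.7 of \cite{knt:mosssparp}, where it is established by a direct analysis of ideals and congruences of matrix semirings, extending Lemma~3.1 of \cite{bashkepka:css}. Your argument instead derives it from the corner-semiring machinery the paper itself develops: $E_{11}$ is idempotent; it is full, since $rE_{ij}=(rE_{i1})E_{11}E_{1j}$ exhibits every $rE_{ij}$, hence every finite sum $\sum_{i,j}a_{ij}E_{ij}$, inside $M_n(R)E_{11}M_n(R)$; and the corner computation $E_{11}(a_{ij})E_{11}=a_{11}E_{11}$ shows $r\mapsto rE_{11}$ is a unital semiring isomorphism $R\cong E_{11}M_n(R)E_{11}$, so Corollary~5.4 gives the claim (for all $n\ge 1$, in fact, as you note). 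What your approach buys is economy and self-containedness: it makes the external citation redundant, and it even streamlines Theorem~5.6, since by Proposition~5.2 any semiring Morita equivalent to $R$ is a full corner of some $M_n(R)$, so Corollary~5.4 applied twice (once with your idempotent, once with the idempotent furnished by Proposition~5.2) yields Morita invariance with no separate matrix-semiring lemma. What the cited direct proof buys is an explicit correspondence between ideals (congruences) of $M_n(R)$ and those of $R$ that does not pass through the idempotent formalism, and which stands on its own in \cite{knt:mosssparp} -- a paper that, of course, could not rely on the present one.
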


Applying Propositions 5.2, 5.5 and Corollary~5.4, we immediately
establish that ideal-simpleness, congruence-simpleness and simpleness
are Morita invariants for semirings, namely:

\begin{theorem}
  Let $R$ and $S$ be Morita equivalent semirings.  Then, the semirings
  $R$ and $S$ are simultaneously congruence-simple (ideal-simple,
  simple).
\end{theorem}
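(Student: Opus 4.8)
The plan is to reduce the statement to a purely internal comparison between a semiring, a matrix semiring over it, and a corner determined by a full idempotent, thereby chaining together Propositions~5.2 and 5.5 with Corollary~5.4. Since Morita equivalence is a symmetric (indeed equivalence) relation, it suffices to establish a single chain of equivalences between the relevant properties of $R$ and of $S$; the argument below is symmetric in $R$ and $S$, so both directions are handled simultaneously.

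First, I would apply Proposition~5.2 to the Morita equivalent pair $R$ and $S$: there exist a natural number $n$ and an idempotent $e\in M_n(R)$ with $M_n(R)eM_n(R)=M_n(R)$ such that $S\cong eM_n(R)e$. The key observation is that the condition $M_n(R)eM_n(R)=M_n(R)$ is precisely the statement that $e$ is a \emph{full} idempotent in the semiring $M_n(R)$, in the sense of the definition preceding Proposition~5.3. Hence Corollary~5.4, applied to the semiring $M_n(R)$ together with its full idempotent $e$, yields that $M_n(R)$ is congruence-simple (respectively ideal-simple, simple) if and only if the corner $eM_n(R)e\cong S$ is congruence-simple (respectively ideal-simple, simple).

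It then remains to relate $M_n(R)$ back to $R$. For $n\ge 2$ this is exactly Proposition~5.5, which gives that $M_n(R)$ is congruence-simple (ideal-simple, simple) if and only if $R$ is. Combining this with the equivalence furnished by Corollary~5.4 produces the desired chain $R$ simple $\Leftrightarrow M_n(R)$ simple $\Leftrightarrow eM_n(R)e\cong S$ simple, and analogously with ``simple'' replaced throughout by ``congruence-simple'' or by ``ideal-simple''.

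The only genuine gap is the degenerate case $n=1$, where Proposition~5.5 is unavailable (it requires $n\ge 2$). But here $M_1(R)=R$ and the fullness condition reads $ReR=R$, so $e$ is a full idempotent of $R$ itself and $S\cong eRe$; the conclusion then follows directly from Corollary~5.4 applied to $R$ and $e$, with no appeal to Proposition~5.5. I therefore do not expect any serious obstacle beyond keeping track of this edge case and verifying that the fullness notion supplied by Proposition~5.2 coincides with the hypothesis of Corollary~5.4; once those bookkeeping points are settled, the theorem is a formal consequence of the three cited statements.
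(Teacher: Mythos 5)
Your proposal is correct and follows essentially the same route as the paper, whose entire proof consists of the remark that the theorem follows by ``applying Propositions 5.2, 5.5 and Corollary 5.4''; your chain $R \Leftrightarrow M_n(R) \Leftrightarrow eM_n(R)e \cong S$ is exactly that argument made explicit. Your separate treatment of the $n=1$ case (where Proposition~5.5 is unavailable but $M_1(R)=R$ makes Corollary~5.4 suffice on its own) is a point of care the paper silently elides, and it is handled correctly.
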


In our next result, characterizing simple semirings with an infinite
element and complete simple semirings, we also show that these classes
of semirings are actually the same.

\begin{theorem}
  For a semiring $R$, the following conditions are equivalent:
  \begin{enumerate}[\ \ (i)\ ]
  \item $R$ is a simple semiring with an infinite element;
  \item $R$ is Morita equivalent to the Boolean semiring $\mathbf{B}$;
  \item $R\cong\mathbf{E}_M$, where $M$ is a nonzero finite
    distributive lattice;
  \item $R$ is a complete simple semiring.
  \end{enumerate}
\end{theorem}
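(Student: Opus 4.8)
The plan is to establish the cycle (i) $\Rightarrow$ (ii) $\Rightarrow$ (iii) $\Rightarrow$ (iv) $\Rightarrow$ (i), arranged so that the equivalence of (ii) and (iii) is pure Morita theory over $\mathbf{B}$ and the passages (iii) $\Rightarrow$ (iv), (iv) $\Rightarrow$ (i) are essentially bookkeeping. The conceptual core is (i) $\Rightarrow$ (ii): the task is to turn the abstract hypothesis of an infinite element into a concrete Morita equivalence with the Boolean semifield. Notably, my argument for this step will use only that the infinite element $\infty$ is additively absorbing, $r+\infty=\infty$ for all $r$, so it is robust to the precise convention for ``infinite element''.

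For (i) $\Rightarrow$ (ii), let $\infty$ be the infinite element. First I would observe that $\infty\ne 0$ (otherwise $r=r+\infty=\infty=0$ forces $R=0$, contradicting $1\ne 0$) and that $R$ cannot be a ring (an infinite element in a ring forces $R=0$); hence, being congruence-simple and proper, $R$ is additively idempotent by \cite[Proposition~3.1]{zumbr:cofcsswz}, so $(R,+,0)$ is a join-semilattice with top $\infty$. Multiplying $1+\infty=\infty$ on the left by $\infty$ gives $\infty+\infty^2=\infty^2$, i.e. $\infty\le\infty^2$, while $\infty^2\le\infty$ since $\infty$ is the top; by antisymmetry $\infty^2=\infty$. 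Ideal-simpleness gives $R\infty R=R$, so $\infty$ is a \emph{full} idempotent. The key step is to identify the corner $T:=\infty R\infty$. By Corollary~5.4 it is a simple semiring whose identity $1_T=\infty$ is simultaneously the additive top of $T$. I would then argue that for any nonzero $a\in T$, ideal-simpleness yields a finite expression $1_T=\sum_i x_i a y_i$ with $x_i,y_i\in T$; since every element of $T$ lies below $1_T$ and multiplication is monotone, each summand satisfies $x_i a y_i\le 1_T a 1_T=a$, so $1_T\le a\le 1_T$ and $a=1_T$. Thus $T=\{0,\infty\}\cong\mathbf{B}$, and, $\infty$ being full, Proposition~5.2 with $n=1$ gives that $R$ is Morita equivalent to $T\cong\mathbf{B}$.

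For (ii) $\Leftrightarrow$ (iii) I would use that $_{\mathbf{B}}\mathcal{M}$ is the category of semilattices with zero and that the $\mathbf{B}$-endomorphisms of $M$ are exactly $\mathbf{E}_M$. A Morita equivalence $R\sim\mathbf{B}$ means $R\cong\mathbf{E}_P$ for a progenerator $P$; a finitely generated projective $P$ is a retract of some $\mathbf{B}^n$, hence finite, and projective $\mathbf{B}$-semimodules are distributive lattices by \cite[Theorem~5.3]{horn-kim:cos}, so $P$ is a nonzero finite distributive lattice, giving (iii). Conversely, a nonzero finite distributive lattice $M$ is projective (again \cite[Theorem~5.3]{horn-kim:cos}), finitely generated, and a generator, since the support map $m\mapsto 0$ if $m=0$ and $m\mapsto 1$ otherwise is a nonzero homomorphism $M\to\mathbf{B}$, forcing the trace ideal to be all of $\mathbf{B}$; hence $M$ is a progenerator and $\mathbf{E}_M\cong R$ is Morita equivalent to $\mathbf{B}$. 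For (iii) $\Rightarrow$ (iv), finiteness of $M$ makes $\mathbf{E}_M$ a finite additively idempotent, hence complete, semiring (Remark~4.8), which is simple by Theorem~3.3. For (iv) $\Rightarrow$ (i), a complete simple semiring is cc-simple by Theorem~4.6, so additively idempotent and a complete lattice by Proposition~4.3; then $\infty:=\sum_{x\in R}x$ is its top and satisfies $r+\infty=\infty$, an infinite element.

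The main obstacle is the heart of (i) $\Rightarrow$ (ii): proving $\infty R\infty\cong\mathbf{B}$. Everything hinges on the observation that in the corner $T$ the multiplicative identity is the additive maximum, so that products shrink below their factors and ideal-simpleness collapses $T$ to two elements. I expect the delicate points to be purely order-theoretic: one must genuinely use the additive idempotence extracted at the outset to get antisymmetry (hence $\infty^2=\infty$ and the bound $\sum_i x_i a y_i\le a$), and the fullness $R\infty R=R$ must be invoked twice, once to apply Corollary~5.4 to $T$ and once to apply Proposition~5.2 to conclude the Morita equivalence.
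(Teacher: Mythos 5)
Your proof is correct, and its global architecture coincides with the paper's: the same cycle (i) $\Rightarrow$ (ii) $\Rightarrow$ (iii) $\Rightarrow$ (iv) $\Rightarrow$ (i), the same reduction of (i) $\Rightarrow$ (ii) to showing that $\infty$ is a full idempotent whose corner $\infty R\infty$ is $\cong\mathbf{B}$ followed by Proposition~5.2 with $n=1$, and the same citations for the remaining legs (Theorem~4.12 of \cite{kn:meahcos} plus \cite[Theorem~5.3]{horn-kim:cos} for (ii) $\Rightarrow$ (iii); Theorem~3.3 and Remark~4.8 for (iii) $\Rightarrow$ (iv)). Where you genuinely diverge is the core collapse $\infty R\infty=\{0,\infty\}$. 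The paper works entirely with ideals of $R$ itself: it first shows the annihilator ideals $A_r=\{x\in R\mid Rx=0\}$ and $A_l=\{x\in R\mid xR=0\}$ vanish, whence $\infty x\infty\ne 0$ for every nonzero $x$, and then uses that the principal downsets $(\infty^2]$ and $(\infty x\infty]$ are ideals, so ideal-simpleness forces $\infty^2=\infty$ and $\infty x\infty=\infty$ for nonzero $x$. You instead obtain $\infty^2=\infty$ by pure order theory (multiplying $1+\infty=\infty$ by $\infty$), transfer simpleness to the corner $T=\infty R\infty$ via Corollary~5.4, and collapse $T$ by the observation that its identity is its additive top, so any ideal-generation expression $1_T=\sum_i x_i a y_i$ is squeezed below $a$; this squeeze (``an additively idempotent ideal-simple semiring whose identity is the top element is $\mathbf{B}$'') is essentially the mechanism that resurfaces in the paper's Section~6 treatment of lattice-ordered semirings. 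Both routes are sound: yours avoids the annihilator-ideal step and the pointwise analysis of $\infty x\infty$, at the price of invoking the heavier Corollary~5.4, which the paper's direct computation never needs. Two harmless further deviations: your (iv) $\Rightarrow$ (i) manufactures the infinite element from cc-simpleness via Theorem~4.6 and Proposition~4.3, where the paper simply cites \cite[Proposition~22.27]{golan:sata} (so the paper's version needs no simpleness hypothesis for that step); and your direction (iii) $\Rightarrow$ (ii), verifying that a nonzero finite distributive lattice is a $\mathbf{B}$-progenerator, is correct but redundant for the cycle.
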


\begin{proof}
  (i) $\Longrightarrow $ (ii).  Let $R$ be a simple semiring with the
  infinite element $\infty$.  Because $\infty$ is the additively
  absorbing element of $R$, the additive reduct $(R,+,0)$ is not a
  group and $R$ a proper semiring, and hence, by
  \cite[Proposition~3.1] {zumbr:cofcsswz}, $R$ is an additively
  idempotent semiring.  Then, one readily sees that the sets $A_r :=
  \{x\in R\mid Rx=0\}$ and $A_l := \{x\in R\mid xR=0\}$ are ideals in
  $R$, and therefore, $A_r = A_l = 0$.  The latter implies $\infty
  x\infty \ne 0$ for any nonzero $x\in R$, in particular,
  $\infty^2\neq 0$.  Since the set $(\infty ^2] := \{x\in R\mid x\le
  \infty^2\}$ is obviously a nonzero ideal in $R$, one has
  $(\infty^2]=R$ and, hence, $\infty^2 = \infty$.  Again, for, as it
  is easy to see, $( \infty x\infty ] := \{y\in R\mid y\le \infty
  x\infty \}$ is an ideal in $R$, $( \infty x\infty ] = R$ for any
  nonzero $x\in R$, and therefore, for any $x\in R$, we have $\infty
  x\infty = 0$ or $\infty x\infty = \infty$.  From this observations
  we conclude that $\infty R\infty = \{0, \infty\}\cong \mathbf{B}$.
  Then, taking into consideration that $R\infty R$ is a nonzero ideal
  of $R$, one has $R\infty R=R$ and by Proposition 5.2 gets the
  implication.\smallskip

  (ii) $\Longrightarrow$ (iii).  Since the semiring $R$ is Morita
  equivalent to the Boolean semiring $\mathbf{B}$, by
  \cite[Theorem~4.12 and Definition~4.1]{kn:meahcos} there exists a
  finitely generated projective (\textit{i.e.}, a progenerator)
  $\mathbf{B}$-semimodule $_{\mathbf{B}}M\in |_{\mathbf{B}}
  \mathcal{M}|$ such that the semirings $R$ and ${\rm
    End}(_{\mathbf{B}}M) = \mathbf{E}_M$ are isomorphic. Then, taking
  into consideration that $_{\mathbf{B}}M\in
  |_{\mathbf{B}}\mathcal{M}|$ is a finitely generated semimodule and
  \cite[Theorem~5.3]{horn-kim:cos} (see also \cite[Fact~5.9]
  {kat:thcos}), one right away concludes that $M$ is a finite
  distributive lattice.\smallskip

  (iii) $\Longrightarrow$ (iv).  This implication immediately follows
  from Theorem~3.3 and Remark~4.8.\smallskip

  (iv) $\Longrightarrow$ (i).  It is obvious since by
  \cite[Proposition~22.27] {golan:sata} any complete semiring has an
  infinite element.
\end{proof}

From this result we immediately obtain the following description of all
finite simple semirings.

\begin{corollary}
  Let $R$ be a finite semiring.  Then one of the following holds:
  \begin{enumerate}[\ \ (i)\ ]
  \item $R$ is isomorphic to a matrix semiring $M_n(F)$ for a
    finite field $F$, $n\ge 1$;
  \item $R\cong\mathbf{E}_M$, where $M$ is a nonzero finite
    distributive lattice.
  \end{enumerate}
\end{corollary}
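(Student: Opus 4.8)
The plan is to split according to whether the finite simple semiring $R$ is a ring or a proper semiring, and to observe that these two cases yield exactly the two alternatives in the statement. (As the sentence preceding the corollary makes clear, ``finite semiring'' here means \emph{finite simple semiring}.) Note that this dichotomy is precisely the one exposed by Proposition~5.1, so one could also route the argument through that result; I prefer the direct split below.

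First I would treat the case where $R$ happens to be a ring. For a ring the lattice of two-sided ideals is in natural bijection with the lattice of ring congruences (via $I \mapsto \{(a,b)\mid a-b\in I\}$), so both the congruence-simpleness and the ideal-simpleness of $R$ as a semiring coincide with $R$ being a simple ring in the classical sense. A finite simple ring is, by the Wedderburn--Artin theorem, isomorphic to $M_n(D)$ for a division ring $D$; since $R$ is finite, $D$ is a finite division ring, hence a finite field $F$ by Wedderburn's little theorem. This produces alternative (i), namely $R\cong M_n(F)$.

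The remaining case is that $R$ is a proper semiring. Since $R$ is simple it is in particular congruence-simple, so by \cite[Proposition~3.1]{zumbr:cofcsswz} (already invoked earlier in the excerpt) the additive reduct $(R,+,0)$ is idempotent. Being finite and additively idempotent, $R$ is then a complete semiring by Remark~4.8, hence a complete simple semiring. The implication (iv)$\Longrightarrow$(iii) of Theorem~5.7 now gives $R\cong\mathbf{E}_M$ for a nonzero finite distributive lattice $M$, which is alternative (ii); the two cases are clearly exhaustive.

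The only step drawing on input from outside the paper is the ring case, where I rely on the Wedderburn--Artin structure theorem together with Wedderburn's little theorem, so I expect this to be the main (though entirely routine) obstacle; the one point worth stating carefully is the identification of semiring-simpleness with ring-simpleness through the ideal--congruence correspondence on a ring. The proper-semiring case is internal: it reduces the matter to Theorem~5.7 via the completeness supplied by Remark~4.8, so no genuine difficulty arises there.
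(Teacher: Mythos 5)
Your proof is correct and takes essentially the same approach as the paper: the same dichotomy via \cite[Proposition~3.1]{zumbr:cofcsswz} (ring versus additively idempotent proper semiring), Wedderburn--Artin for the ring case, and Theorem~5.7 for the proper case. The only cosmetic difference is the entry point into Theorem~5.7 --- the paper observes that a finite additively idempotent semiring has an infinite element and applies (i)\,$\Longrightarrow$\,(iii), whereas you pass through completeness via Remark~4.8 and apply (iv)\,$\Longrightarrow$\,(iii); both are equally valid since Theorem~5.7 makes all four conditions equivalent.
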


\begin{proof}
  Indeed, if $R$ is simple, then by
  \cite[Proposition~3.1]{zumbr:cofcsswz} $R$ is either a ring or an
  additively idempotent semiring.  In the first case, one obtains
  $R\cong M_n(F)$ for some finite field $F$ by the well-known
  variation of the classical Wedderburn-Artin Theorem, characterizing
  simple artinian rings (see, for example,
  \cite[Theorem~1.3.10]{lam:afcinr}).  In the case when $R$ is a
  finite additively idempotent semiring, $R$ has an infinite element
  and the result follows from Theorem~5.7.
\end{proof}

In connection with this corollary, we wish to mention the following
remarks and, in our view interesting, problem.

\begin{remark}
  It is easy to see that a finite proper hemiring $R$ is simple iff
  $R\cong \mathbf{F}_M$ for some finite lattice $M$.  Indeed, if $R$
  is simple, then, by \cite[Theorem~1.7]{zumbr:cofcsswz}, for some
  finite lattice $M$, there exists a subhemiring $S$ of a semiring
  $\mathbf{E}_M$ such that $R\cong S$ and $\mathbf{F}_M\subseteq
  S\subseteq\mathbf{E}_M$, and since, by Lemma~3.1, $\mathbf{F}_M$ is
  an ideal of $ \mathbf{E}_M$, the ideal-simpleness of $S$ implies
  $S=\mathbf{F}_M$.  Inversely: If $R\cong \mathbf{F}_M$ for some
  finite lattice $M$, then by the same
  \cite[Theorem~1.7]{zumbr:cofcsswz} $\mathbf{F}_M$ is
  congruence-simple; and assuming that $I\subseteq \mathbf{F}_M$ is a
  nonzero ideal and $f(m)\ne 0$ for some $f\in I$ and $m\in M$, we
  have $ e_{a,b} = e_{0,b}\,f\,e_{a,m}\in I$ for all $a,b\in M$ and,
  hence, $\mathbf{F}_M\subseteq I$, and therefore, $\mathbf{F}_M=I$
  and $\mathbf{F}_M$ is ideal-simple too.  Moreover, using
  \cite[Proposition~2.3]{zumbr:cofcsswz} in the same fashion one may
  easily see that for any lattice $M$ with an absorbing (infinite)
  element, $\mathbf{F}_M$ is always a simple proper hemiring with an
  infinite element; however, the following open question is of
  interest.
\end{remark}

\begin{problem}
  Is it right that for any proper simple hemiring $R$ with the
  infinite element, there exists a lattice $M$ with the infinite
  element such that the hemirings $R$ and $\mathbf{F}_M$ are
  isomorphic?
\end{problem}

In our next observation we present a semiring analog of the well-known
``Double Centralizer Property'' of ideals of simple rings (see,
\textit{e.g.}, \cite[Theorem~1.3.11]{lam:afcinr}).

\begin{theorem}[{cf. \cite[Theorem~1.3.11]{lam:afcinr}}]
  Let $R$ be a simple semiring, and $I$ be a nonzero left ideal.  Let
  $D={\rm End}({}_RI)$ (viewed as a semiring of right operators on
  $I$).  Then the natural map $f:R\longrightarrow {\rm End}(I_D)$ is a
  semiring isomorphism; the semimodule $I\in {}_R\mathcal{M}$ is a
  generator in the category $_R\mathcal{M}$, and the semimodule
  $I\in\mathcal{M}_D$ is a finitely generated projective right
  $D$-semimodule.  Moreover, there is a nonzero idempotent $e$ of the
  matrix semiring $M_n(D)$, $n\ge 1$, such that the semirings $R$ and
  $eM_n(D)e$ are isomorphic, and the semiring $D$ is simple iff the
  left ideal $I$ is a finitely generated projective left
  $R$-semimodule.
\end{theorem}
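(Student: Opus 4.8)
The plan is to follow the classical double-centralizer argument (Theorem~1.3.11 of \cite{lam:afcinr}), adapting the injectivity step to the congruence-theoretic setting of semirings. First I would establish that $_RI$ is a generator. The trace ideal ${\rm tr}(I)=\sum_{g\in{\rm Hom}(_RI,{}_RR)}g(I)$ is a two-sided ideal of $R$: closure under left multiplication uses that $I$ is a left ideal, while closure under right multiplication by $s\in R$ uses that $g$ followed by right multiplication by $s$ is again a left $R$-homomorphism. Since the inclusion $\iota\colon I\hookrightarrow R$ lies in ${\rm Hom}(_RI,{}_RR)$, we get $0\neq I\subseteq{\rm tr}(I)$, so ideal-simpleness forces ${\rm tr}(I)=R$; hence $_RI$ is a generator and there are $g_1,\dots,g_n\in{\rm Hom}(_RI,{}_RR)$ and $x_1,\dots,x_n\in I$ with $\sum_{j=1}^n g_j(x_j)=1$. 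Next, the natural map $f\colon R\to{\rm End}(I_D)$, $f(r)=\lambda_r$ with $\lambda_r(x)=rx$, is a well-defined semiring homomorphism, since each $d\in D$ is left $R$-linear and so $\lambda_r$ commutes with the right $D$-action. For injectivity I would invoke congruence-simpleness rather than a kernel: the kernel congruence of $f$ is a congruence on $R$, and as $f(0)=0\neq{\rm id}_I=f(1)$ (because $I\neq 0$), it cannot be the universal congruence, so it equals $\vartriangle_R$ and $f$ is injective.

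The heart of the proof is surjectivity of $f$. For each $g\in{\rm Hom}(_RI,{}_RR)$ and $v\in I$ the map $\delta_{g,v}\colon I\to I$, $u\mapsto g(u)v$, is left $R$-linear and hence lies in $D$. Given $\phi\in{\rm End}(I_D)$, the identity $\phi(u\cdot\delta_{g,v})=\phi(u)\cdot\delta_{g,v}$ unwinds to the key relation $\phi(g(u)v)=g(\phi(u))v$, valid for all $u,v\in I$ and all $g$. Setting $r:=\sum_{j=1}^n g_j(\phi(x_j))\in R$ and using $\sum_j g_j(x_j)=1$, I would compute for every $x\in I$ that $\phi(x)=\phi\big(\sum_j g_j(x_j)\,x\big)=\sum_j\phi\big(g_j(x_j)x\big)=\sum_j g_j(\phi(x_j))\,x=rx=f(r)(x)$; hence $\phi=f(r)$. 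Thus $f$ is a bijective semiring homomorphism, and therefore a semiring isomorphism $R\cong{\rm End}(I_D)$.

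To see that $I_D$ is finitely generated projective I would exhibit a finite dual basis: setting $\gamma_j\colon I\to D$, $\gamma_j(x):=\delta_{g_j,x}$, a direct check shows each $\gamma_j$ is a right $D$-homomorphism and $\sum_j x_j\cdot\gamma_j(x)=\sum_j g_j(x_j)x=x$ for all $x\in I$. Hence $I_D$ is a retract of the free module $D^n$: with $\pi\colon D^n\to I$ and $\iota\colon I\to D^n$ the induced maps, $e:=\iota\pi$ is a nonzero idempotent of ${\rm End}(D^n_D)\cong M_n(D)$, and the corner description gives ${\rm End}(I_D)\cong e\,{\rm End}(D^n_D)\,e\cong eM_n(D)e$; combined with the isomorphism of the previous step this yields $R\cong eM_n(D)e$.

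For the final equivalence I would argue via Morita equivalence and Theorem~5.6. If $_RI$ is finitely generated projective, then together with the generator property it is a progenerator for $_R\mathcal{M}$, so by the semiring Morita theorem \cite[Theorem~4.12]{kn:meahcos} the semirings $R$ and $D={\rm End}(_RI)$ are Morita equivalent, whence Theorem~5.6 forces $D$ to be simple. Conversely, if $D$ is simple, then ${\rm tr}(I_D)$ is a two-sided ideal of $D$ that is nonzero (the $\gamma_j$ are not all zero, since $\sum_j x_j\cdot\gamma_j(x)=x$), so ${\rm tr}(I_D)=D$ and $I_D$ is a generator; being also finitely generated projective, $I_D$ is a progenerator for $\mathcal{M}_D$ with ${\rm End}(I_D)\cong R$, and the left-right symmetry of the Morita context then makes $_RI$ a progenerator too, in particular finitely generated projective. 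The main obstacle, beyond the (clean) double-centralizer computation, is precisely this last step: one must check that the supporting module-theoretic facts — the dual-basis characterization of projectivity, the corner description ${\rm End}(eD^n)\cong eM_n(D)e$, and especially the left-right symmetry of a Morita context — all remain valid for semimodules over semirings, for which I would lean on the semiring Morita machinery of \cite{kn:meahcos} and \cite{kat:thcos}.
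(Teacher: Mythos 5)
Your proposal is correct, and its skeleton coincides with the paper's: injectivity of $f$ from congruence-simpleness, surjectivity from ideal-simpleness via the double-centralizer computation, the generator property of ${}_RI$ from the trace ideal, projectivity of $I_D$ by exhibiting it as a retract of $D^n$, the corner isomorphism $R\cong eM_n(D)e$, and the forward direction of the final equivalence via progenerators and Morita invariance (Theorem~5.6). Two local differences are worth recording. First, where the paper disposes of surjectivity by asserting that one may repeat ``verbatim the scheme of the proof of Theorem~1.3.11'' of Lam, you actually carry out the computation, using the trace decomposition $\sum_j g_j(x_j)=1$ and the operators $\delta_{g,v}\in D$, $u\mapsto g(u)v$; this is a subtraction-free variant of Rieffel's argument and substantiates the paper's claim that the classical scheme survives in the semiring setting. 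Second, in the direction ``$D$ simple $\Rightarrow{}_RI$ finitely generated projective,'' the paper proves ${\rm tr}(I_D)=D$ by introducing the right ideal $\overline{I}=\{\overline{x}\mid x\in I\}\subseteq D$ of right-multiplication operators, showing $\overline{I}\neq 0$ via $l_R(I)=0$ and $I^2\neq 0$, and then transporting the trace along the surjection $I_D\twoheadrightarrow\overline{I}_D$; you instead observe that the dual-basis maps $\gamma_j$ already constructed cannot all vanish (else $x=\sum_j x_j\gamma_j(x)=0$ for all $x\in I$), which yields ${\rm tr}(I_D)\neq 0$ more economically. Finally, your appeal to ``left-right symmetry of the Morita context'' is exactly what the paper does concretely: it reruns the retract argument with $I_D$ in place of ${}_RI$, using $R\cong{\rm End}(I_D)$, to conclude that ${}_RI$ is a retract of a finitely generated free semimodule. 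So nothing essential is missing; spelling out that dual computation (a dual basis for $I$ as a left ${\rm End}(I_D)$-semimodule built from homomorphisms $h_k\colon I_D\to D_D$ with $\sum_k h_k(y_k)=1_D$) would make your last step self-contained rather than a citation.
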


\begin{proof}
  Since the natural map $f:R\longrightarrow {\rm End}(I_D)$ is defined
  by $r\longmapsto f(r)\in {\rm End}(I_D)$, where $f(r)(i):=ri$ for
  any $i\in I$ and $r\in R$, it is clear that $f$ is a semiring
  homomorphism from $R$ to ${\rm End}(I_D)$, and $ _RI_{D}$ is an
  $R$-$D$-bisemimodule (\cite{kat:thcos}).  For the semiring $R$ is
  congruence-simple, the map $f$ is injective and there should be only
  shown that it is surjective, too.  And, for the ideal-simpleness
  of~$R$, the latter can be established by repeating verbatim the
  scheme of the proof of Theorem~1.3.11 of \cite{lam:afcinr}.  Thus,
  $f:R\longrightarrow {\rm End}(I_D)$ is an isomorphism and, hence,
  $R\cong {\rm End}(I_D)$.

  Since $R$ is a simple semiring and $I$ is a nonzero left ideal in
  $R$, the trace ideal ${\rm tr}(_RI)$ coincides with $R$,
  \textit{i.e.}, ${\rm tr}(_RI)=R$, and by
  \cite[Proposition~3.9]{kn:meahcos} the semimodule $_RI$ is a
  generator in the category of semimodule $_R\mathcal{M}$.  This fact
  implies that for some natural number $n\ge 1$ the regular semimodule
  $_RR$ is a retract of the left $R$-semimodule $I^n$, \textit{i.e.},
  there exist $R$-homomorphisms $\alpha: {}_RI^n\longrightarrow
  {}_RR$ and $\beta: {}_RR\longrightarrow {}_RI^n$ in the category
  $_R\mathcal{M}$ such that $\alpha \beta = 1_{_RR}$.  Therefore, in
  the category $\mathcal{M}_D$ , there are the obvious
  $D$-homomorphisms $\mathcal{M}_D(\alpha, 1_{_RI})(\beta, 1_{_RI}):
  {}_R\mathcal{M}(_RI^n,_RI) \longrightarrow {}_R\mathcal{M}(_RR,_RI)$
  and $\mathcal{M}_D(\alpha, 1_{_RI}):
  {}_R\mathcal{M}(_RR,_RI)\longrightarrow {}_R\mathcal{M}(_RI^n,_RI)$
  such that $\mathcal{M}_D(\beta, 1_{_RI})\mathcal{M}_D(\alpha,
  1_{_RI}) = 1_{_R\mathcal{M}(_RR,_RI)}$, as well as
  $_R\mathcal{M}(_RI^n,_RI) \cong {\rm End}(_RI)^n=D^n$ and
  $_R\mathcal{M}(_RR,_RI)\cong I_D$.  From these observations, one may
  easily see that the semimodule $I_D\in\mathcal{M}_D$ is a retract of
  the right $D$-semimodule $D^n\in\mathcal{M}_D$, and therefore, $I_D$
  is a finitely generated projective right $D$-semimodule.  Then, in
  the same manner as in the first part of the proof of
  Proposition~5.2, one may establish that $R\cong eM_n(D)e$ for some
  nonzero idempotent $e$ of the matrix semiring $M_n(D)$.

  Now assume that the left ideal $I$ is a finitely generated
  projective left $R$-semimodule.  Then, $_RI$ is a progenerator of
  the category $_R\mathcal{M}$, and, hence, $D$ is Morita equivalent
  to $R$ and, by Theorem~5.6, $D$ is a simple semiring, too.

  Finally, suppose that $D$ is a simple semiring.  It is clear that
  each element $x\in I$ produces the endomorphism $\overline{x}\in{\rm
    End}(_RI)=D$ defined by $\overline{x}(i):=ix$ for all $i\in I$,
  and denote by $\overline{I} := \{\overline{x}\mid x\in I\}\subseteq
  D$ the set of all those endomorphisms.  For any $d\in D$, it is
  clear that $\overline{xd}=\overline{x}d$, and, hence, $\overline{I}$
  is a right ideal of $D$.  For $R$ is a simple semiring, the ideal
  $l_R(I) := \{r\in R\mid ra=0\text{ for all }a\in I\}=0$.  The latter
  implies that $I^2\ne 0$ and, hence, $\overline{I}\ne 0$, and by the
  simpleness of~$D$ one has that ${\rm tr}(\overline{I}_D) = D$, what
  in turn, taking into consideration that the $D$-homomorphism
  $\theta: I_D\longrightarrow \overline{I}_D$ defined by $\theta(x) :=
  \overline{x}$ for all $x\in I$ is obviously a surjective one, gives
  that ${\rm tr}(I_D)=D$, and therefore, by
  \cite[Proposition~3.9]{kn:meahcos}, the semimodule $I_D$ is a
  generator in the category $\mathcal{M}_D$.  Then, noting that
  $R\cong {\rm End}(I_D)$, in the similar way as it was done above for
  the semimodule~$_RI$ only now substituting it with the semimodule
  $I_D$, one obtains that $_RI$ is a finitely generated projective
  left $R$-semimodule.
\end{proof}

As a corollary of Theorem~5.10, we immediately obtain the following
description of all simple semirings having projective minimal left (right)
ideals.

\begin{theorem}
  For a semiring $R$, the following statements are equivalent:
  \begin{enumerate}[\ \ (i)\ ]
  \item $R$ is a simple semiring with a projective minimal left
    (right) ideal;
  \item $R$ is isomorphic either to a matrix semiring $M_n(F)$, $n\ge
    1$, over a division ring $F$, or to an endomorphism semiring
    $\mathbf{E}_M$ of a nonzero finite distributive lattice $M$.
  \end{enumerate}
\end{theorem}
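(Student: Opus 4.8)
The plan is to deduce everything from the Double Centralizer Theorem~5.10, from the dichotomy of \cite[Proposition~3.1]{zumbr:cofcsswz} (a simple semiring is either a ring or additively idempotent), and from the description of projective $\mathbf{B}$-semimodules in \cite[Theorem~5.3]{horn-kim:cos}. By left--right symmetry I treat the left-ideal case. For (i)$\Longrightarrow$(ii), let $I$ be a projective minimal left ideal of the simple semiring $R$. I would first show $I$ is cyclic: the set $N:=\{m\in I\mid Rm=0\}$ is a subsemimodule of the minimal $I$, and $N=I$ would give $RI=0$, hence $RIR=(RI)R=0$, contradicting $I\subseteq RIR$ (as $1\cdot I\cdot 1=I$) and $I\ne 0$; so $N=0$, whence $Rm=I$ for every nonzero $m\in I$ and $I$ is finitely generated. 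Being also projective, $I$ is a finitely generated projective left $R$-semimodule, so Theorem~5.10 applies and yields $R\cong{\rm End}(I_D)$ for $D:={\rm End}({}_RI)$, with $I_D$ finitely generated projective and $D$ simple. Everything then reduces to identifying $D$.

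Here I would split according to \cite[Proposition~3.1]{zumbr:cofcsswz}. If $R$ is a ring, then $I$ is a genuine simple left $R$-module, so the classical Schur Lemma (see \cite[Theorem~1.3.11]{lam:afcinr}) makes $D$ a division ring; as $I_D$ is finitely generated projective over a division ring it is free, $I_D\cong D^n$, whence $R\cong{\rm End}(D^n_D)\cong M_n(D)$ with $D$ a division ring. If instead $R$ is additively idempotent, then so are $I$ and $D$, so $D$ is a congruence-simple, additively idempotent (hence zerosumfree) semiring. The crucial observation is that $D$ has no zero divisors: a nonzero $\psi\in D$ is surjective (its image is a nonzero subsemimodule of the minimal $I$), so if $\varphi,\psi\ne 0$ and $\varphi(x)\ne 0$, then writing $x=\psi(y)$ gives $\varphi\psi(y)=\varphi(x)\ne 0$. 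Consequently the partition $\{0\}\sqcup(D\setminus\{0\})$ is a semiring congruence on $D$ (additive compatibility uses zerosumfreeness, multiplicative compatibility uses the absence of zero divisors); by congruence-simpleness it must be the diagonal, forcing $|D|\le 2$, i.e. $D\cong\mathbf{B}$. Then $I$ is a finitely generated projective $\mathbf{B}$-semimodule, so by \cite[Theorem~5.3]{horn-kim:cos} it is a nonzero finite distributive lattice $M$, and $R\cong{\rm End}(I_{\mathbf{B}})=\mathbf{E}_M$.

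For the converse (ii)$\Longrightarrow$(i), the matrix semiring $M_n(F)$ over a division ring is a simple artinian ring, hence simple as a semiring, and its columns are projective minimal left ideals; while $\mathbf{E}_M$ is simple by Theorem~3.3, and $\{e_{0,b}\mid b\in M\}$ is a left ideal isomorphic to the simple module ${}_{\mathbf{E}_M}M$ (using $f\circ e_{0,b}=e_{0,f(b)}$ from Lemma~2.1), which is projective because $\mathbf{E}_M$ is Morita equivalent to $\mathbf{B}$ by Theorem~5.7. The step I expect to be the main obstacle is the semiring Schur argument in the additively idempotent case: \emph{unlike over rings, minimality of $I$ controls subsemimodules but not congruences}, so one cannot conclude that nonzero endomorphisms are injective and $D$ is a division semiring in the naive sense. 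The device that rescues the argument is precisely the support congruence $\{0\}\sqcup(D\setminus\{0\})$, whose well-definedness needs only zerosumfreeness and the absence of zero divisors and which collapses any such congruence-simple $D$ onto $\mathbf{B}$; once this is in place, together with Theorem~5.10, the remaining verifications are routine.
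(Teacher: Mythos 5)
Your proof is correct, but it follows a genuinely different route from the paper's. In the key direction (i)$\Longrightarrow$(ii), the paper does not split into the ring/additively idempotent cases at all: it uses minimality \emph{and projectivity} of $I$ simultaneously to show that $D={\rm End}({}_RI)$ is a division semiring (every nonzero $f\in D$ is surjective by minimality, and projectivity of ${}_RI$ splits that surjection, so $f$ is invertible) --- note that this defuses the obstacle you describe at the end: minimality alone would indeed not suffice, but the paper never relies on minimality alone. It then deduces via the Morita apparatus (Propositions~5.2 and~5.5, Theorem~5.6) that $D$ is moreover simple, and finally quotes the classification of simple division semirings (Theorem~4.5 of \cite{knt:mosssparp}) to conclude that $D$ is a division ring or $\mathbf{B}$, invoking Corollary~5.8 in the latter case. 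You instead first prove cyclicity of $I$ (a lemma the paper never needs), feed finite generation plus projectivity into the last clause of Theorem~5.10 to get simpleness of $D$ directly, and then identify $D$ by hand: Schur's lemma in the ring case, and, in the additively idempotent case, your support congruence $\{0\}\sqcup(D\setminus\{0\})$, which needs only zerosumfreeness, absence of zero divisors (both correctly verified from minimality) and congruence-simpleness. Your route is more self-contained --- it avoids the external classification in \cite{knt:mosssparp}, Corollary~5.8 and the Morita-invariance machinery entirely --- at the price of the case analysis; the paper's is shorter given the cited results and yields the stronger intermediate fact that $D$ is always a division semiring. In the converse direction the approaches also differ: the paper takes an arbitrary minimal left ideal of the finite semiring $\mathbf{E}_M$, shows $D\cong\mathbf{B}$ via finiteness and \cite[Corollary~1.5.9]{hebwei:sataaics}, and obtains projectivity from the criterion in Theorem~5.10, whereas you exhibit the explicit minimal left ideal $L=\{e_{0,b}\mid b\in M\}$. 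One spot there needs a patch: ``projective because $\mathbf{E}_M$ is Morita equivalent to $\mathbf{B}$'' is not by itself a proof, since Morita equivalence does not make every semimodule projective; one must know that ${}_{\mathbf{E}_M}M$ is the image of a projective $\mathbf{B}$-semimodule under the equivalence. The cleanest fix is to observe that for any nonzero $b\in M$ the endomorphism $e_{0,b}$ is idempotent and $L=\mathbf{E}_M\,e_{0,b}$ (since $f\circ e_{0,b}=e_{0,f(b)}$ and $m=e_{0,m}(b)$ for every $m\in M$), so $L$ is a retract of the regular semimodule ${}_{\mathbf{E}_M}\mathbf{E}_M$ and hence projective.
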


\begin{proof}
  (i) $\Longrightarrow$ (ii).  Let $R$ be a simple semiring with a
  projective minimal left ideal $I$.  Then, by Theorem~5.10, for $D =
  {\rm End}(_RI)$ and some $n\ge 1$ and a nonzero idempotent $e$ of
  the matrix semiring $M_n(D)$, there exists a semiring isomorphism
  $R\cong eM_n(D)e$.  Actually, $D$ is a division semiring: Indeed,
  for $I$ is a minimal left ideal of the semiring $R$, it is clear
  that $f(I)=I$ for any nonzero element $f\in D$, and, hence, any
  nonzero endomorphism $f\in D$ is a surjection; from the latter and
  for $_RI$ is a projective left semimodule, it follows that there
  exists an injective endomorphism $g\in D$ such that $fg=1_I$ for a
  nonzero endomorphism $f$, and therefore, $g$ and $f$ are
  isomorphisms and $D$ is a division, of course ideal-simple,
  semiring.  From this and using Proposition~5.5, we get that the
  matrix semiring $M_n(D)$ is ideal-simple, and, hence,
  $M_n(D)eM_n(D)=M_n(D)$.  Therefore, by Proposition~5.2, $R$ is Morita
  equivalent to $D$, and, hence, by Theorem~5.6, the division semiring
  $D$ is simple, too, what, by \cite[Theorem~4.5]{knt:mosssparp},
  implies that $D$ is either a division ring or the Boolean semifield
  $\mathbf{B}$.

  If $D$ is a division ring, then, since $R$ is Morita equivalent to
  $D$, it is easy to see that $R$ is a simple artinian ring, and
  therefore, $R\cong M_n(F)$ for some division ring $F$ and $n\ge 1$.
  In the case when $D$ is the Boolean semifield $\mathbf{B}$, it is
  clear that $R$ is a finite additively idempotent simple semiring
  and, therefore, by Corollary 5.8, $R\cong \mathbf{E}_M$ for some
  nonzero finite distributive lattice $M$.\smallskip

  (ii) $\Longrightarrow$ (i).  If $R\cong M_n(F)$ for some division
  ring $F$ and $n\ge 1$, the statement is a well-known classical
  result (see, for example, \cite[Theorem~1.3.5]{lam:afcinr}).

  In the second case, from Theorem~3.3 it follows that $R$ is a finite
  additively idempotent simple semiring containing a minimal left
  ideal $I$.  Let $D:={\rm End}(_RI)$ and show that
  $D\cong\mathbf{B}$: Indeed, since $I$ a minimal left ideal of the
  finite semiring $R$, it is clear that for any nonzero element $f\in
  D$, we have $f(I)=I$, and therefore, $f$ is a surjection; also, for
  $I$ is a finite\ left ideal, $f$ is an injection, too; then, since
  it is obvious that $D$ is a proper division finite semiring, from
  \cite[Corollary 1.5.9]{hebwei:sataaics} it follows that
  $D\cong\mathbf{B}$.  From the latter and Theorem~5.10, one concludes
  that $_RI$ is a projective left $R$-semimodule.
\end{proof}

The next result provides us with a characterization of ideal-simple
semirings having either infinite elements or projective minimal left (right)
ideals.

\begin{theorem}
  For a proper semiring $R$ with either the infinite element or a
  projective minimal left (right) ideal, the following statements are
  equivalent:
  \begin{enumerate}[\ \ (i)\ ]
    \item $R$ is ideal-simple;
    \item $R$ is strongly semiisomorphic to the endomorphism semiring
    $\mathbf{E}_M$ of a nonzero finite distributive lattice $M$.
  \end{enumerate}
\end{theorem}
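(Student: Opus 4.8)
The plan is to reduce everything to the two structure theorems already available, Theorem~5.7 and Theorem~5.11, by first replacing $R$ with a congruence-simple quotient. For the implication (i)$\,\Longrightarrow\,$(ii), since $R$ is proper it is not a simple ring, so Proposition~5.1 forces the second alternative there: a strong semiisomorphism $\pi\colon R\longrightarrow S$ onto an additively idempotent \emph{simple} semiring $S$. The whole task then reduces to showing $S\cong\mathbf{E}_M$ for some nonzero finite distributive lattice $M$, after which the composite $R\xrightarrow{\ \pi\ }S\cong\mathbf{E}_M$ is the required strong semiisomorphism. What remains is to transport the standing hypothesis on $R$ across $\pi$ so that Theorem~5.7 or Theorem~5.11 applies to $S$.

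The infinite-element case is immediate. If $\infty\in R$ is additively absorbing, then for every $s=\pi(r)\in S$ one has $\pi(\infty)+s=\pi(\infty+r)=\pi(\infty)$, so $\pi(\infty)$ is an infinite element of $S$; hence $S\cong\mathbf{E}_M$ by Theorem~5.7.

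The harder case, and the step I expect to be the main obstacle, is transporting a projective minimal one-sided ideal, since projectivity does not descend along arbitrary quotients. The key observation is that such an ideal is, up to isomorphism, generated by an idempotent. Indeed, a minimal left ideal $I$ is cyclic, $I=Rx$ for any $0\ne x\in I$, so the canonical surjection $\psi\colon {}_RR\twoheadrightarrow I$, $r\mapsto rx$, splits by projectivity, say $\psi\sigma=1_I$; the idempotent endomorphism $\epsilon:=\sigma\psi$ of $_RR$ is right multiplication by $e:=\epsilon(1)$, with $e^2=e$, and its image $Re$ is a retract of $_RR$ isomorphic to $I$. Thus I may assume from the outset that the projective minimal left ideal has the form $Re$ with $e$ idempotent. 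Now $\bar e:=\pi(e)$ satisfies $\bar e^2=\bar e$ and $\pi(Re)=S\bar e$, so $S\bar e$ is a retract of $_SS$ and hence \emph{projective}; it is nonzero because $\pi$ has trivial kernel and $e\ne0$. A direct check shows a surjection with trivial kernel carries a minimal left ideal to a minimal one: given a left ideal $0\ne J\subseteq\pi(Re)$, the set $\{a\in Re\mid\pi(a)\in J\}$ is a nonzero left ideal of $R$ contained in $Re$, so it equals $Re$, forcing $J=\pi(Re)$. Therefore $S$ is simple with a projective minimal left ideal, and Theorem~5.11 yields $S\cong M_n(F)$ over a division ring or $S\cong\mathbf{E}_M$; as $S$ is additively idempotent it is not a ring, so $S\cong\mathbf{E}_M$. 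The right-sided hypothesis is handled symmetrically.

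For the converse (ii)$\,\Longrightarrow\,$(i) there is no real difficulty. If $f\colon R\longrightarrow\mathbf{E}_M$ is a strong semiisomorphism, then $\mathbf{E}_M$ is ideal-simple by Theorem~3.3, so any proper ideal $J\subseteq R$ has proper image $f(J)$ by strongness, whence $f(J)=0$ and $J\subseteq f^{-1}(0)=\{0\}$; thus the only ideals of $R$ are $0$ and $R$, i.e.\ $R$ is ideal-simple.
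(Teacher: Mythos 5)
Your proof is correct and follows essentially the same route as the paper's: Proposition~5.1 produces the strong semiisomorphism onto an additively idempotent simple semiring $S$, the standing hypothesis (infinite element, respectively an idempotent-generated projective minimal one-sided ideal) is transported across it, and Theorem~5.7 respectively Theorem~5.11 then identifies $S$ with $\mathbf{E}_M$. The details you write out explicitly -- the splitting argument giving $I=Re$, and the transfer of minimality and projectivity along the kernel-trivial surjection -- are precisely the steps the paper delegates to the cited reference of Il'in and Katsov, so there is no substantive difference in approach.
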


\begin{proof}
  (i) $\Longrightarrow$ (ii).  First consider the case of a semiring
  $R$ with the infinite element $\infty\in R$.  By Proposition~5.1
  there exists a strong semiisomorphism $\alpha $ from $R$ onto an
  additively idempotent simple semiring $S$.  For $\alpha (\infty)$
  is obviously the infinite element of $S$ and Theorem~5.7, we have
  $S\cong \mathbf{E}_M$ for some nonzero finite distributive lattice
  $M$.

  Now let $R$ be an ideal-simple semiring with a projective minimal
  left (right) ideal.  Again, by Proposition~5.1, there exists a
  strong semiisomorphism $\alpha: R\twoheadrightarrow S$ from $R$ onto
  an additively idempotent simple semiring $S$.  Let $_RI$ be a
  projective minimal left ideal of $R$.  Then, it is almost obvious
  that without loss of generality (also see, for instance,
  \cite[Proposition~2.1 and Corollary~2.3]{ilinkatsov:opsvos}), one
  may assume that $I=Re$ for some idempotent $e\in R$; and $\alpha(I)
  = S\alpha(e)$ with $\alpha(e) = (\alpha(e))^2\in S$ is also a
  minimal left ideal of $S$, which, again by
  \cite[Corollary~2.3]{ilinkatsov:opsvos}, is a projective left ideal
  of $S$.  Therefore, applying Theorem~5.11, one has $S\cong
  \mathbf{E}_M$ for some nonzero finite distributive lattice $M$.

  (ii) $\Longrightarrow$ (i).  In the both cases, this follows from
  Theorem~3.3 and Proposition~5.1.
\end{proof}

\begin{corollary}
  A semiring $R$ possessing a projective minimal left (right) ideal is
  ideal-simple iff $R$ is either isomorphic to a matrix semiring
  $M_n(F)$, ${n\ge 1}$, over a division ring $F$, or strongly
  semiisomorphic to the endomorphism semiring $\mathbf{E}_M$ of a
  nonzero finite distributive lattice $M$.
\end{corollary}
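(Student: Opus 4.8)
The plan is to split the argument according to whether $R$ is a ring or a proper semiring: every semiring is exactly one of the two, and the two structural alternatives in the statement correspond precisely to this dichotomy, so the whole corollary should drop out of Theorems~5.11 and~5.12 with little extra work.

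For the forward implication I would assume $R$ ideal-simple with a projective minimal left (right) ideal and treat the two cases separately. If $R$ is a ring, I would first note that for a ring the congruences are in bijection with the two-sided ideals, so that ideal-simpleness upgrades automatically to congruence-simpleness and hence to simpleness; then $R$ is a simple semiring carrying a projective minimal one-sided ideal, Theorem~5.11 gives $R\cong M_n(F)$ over a division ring $F$ or $R\cong\mathbf{E}_M$, and I would discard the second possibility on the ground that $\mathbf{E}_M$ (with $M$ a nonzero lattice) is a nontrivial additively idempotent semiring and therefore not a ring, leaving $R\cong M_n(F)$. If instead $R$ is proper, I would apply Theorem~5.12 verbatim to the proper ideal-simple semiring $R$, obtaining a strong semiisomorphism from $R$ onto $\mathbf{E}_M$ for some nonzero finite distributive lattice $M$.

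For the converse I would verify ideal-simpleness in each case through Proposition~5.1. When $R\cong M_n(F)$ for a division ring $F$, the semiring $R$ is a simple ring and hence ideal-simple. When $R$ is strongly semiisomorphic to $\mathbf{E}_M$, I would invoke that $\mathbf{E}_M$ is additively idempotent and, by Theorem~3.3, simple, so that Proposition~5.1 delivers the ideal-simpleness of $R$ at once; alternatively one checks directly that a strong semiisomorphism with zero kernel would send any nonzero proper ideal of $R$ to a nonzero proper ideal of the ideal-simple $\mathbf{E}_M$, a contradiction forcing $R$ to have no nonzero proper ideals.

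Since the substantive work is already carried out in Theorems~5.11 and~5.12, I do not anticipate a serious obstacle here; the one step calling for a moment's attention is the passage in the ring case, where I rely on ideal-simpleness forcing full simpleness so that Theorem~5.11 becomes available, after which the additively idempotent alternative $\mathbf{E}_M$ can be ruled out because it fails to be a ring.
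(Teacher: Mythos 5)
Your proposal is correct and follows exactly the route the paper intends: the paper states this corollary without proof as an immediate consequence of Theorem~5.12, the implicit argument being precisely your ring/proper dichotomy, with the ring case settled by upgrading ideal-simpleness to simpleness and invoking Theorem~5.11 (discarding the additively idempotent alternative), the proper case by Theorem~5.12, and the converse by Proposition~5.1 together with Theorem~3.3.
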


\begin{remark}
  As was shown in Examples 3.7, in general, even for finite semirings,
  the congruence-simpleness and ideal-simpleness of semirings are
  independent, different notions.  However, since for any commutative
  proper semiring $R$ there exists the surjection
  $R\twoheadrightarrow\mathbf{B}$ (see, for example,
  \cite[Fact~5.5]{kat:thcos}), it is easy to see that for finite
  commutative semirings, the concepts of the ideal-simpleness and
  congruence-simpleness are always the same, i.e., coincide.
  In light of this observation and Theorem~5.12, the following two
  open questions, in our view, are of interest.
\end{remark}

\begin{problem}
  What is the class of all semirings (in particular, finite ones) for
  which the concepts of the ideal-simpleness and congruence-simpleness
  coincide? (In other words, to describe all semirings for which the
  concepts of the ideal-simpleness and congruence-simpleness
  coincide.)
\end{problem}

\begin{problem}
  Describe the class of all proper semirings (in particular, ones
  possessing either infinite elements or projective minimal one-sided
  ideals) which are strongly semiisomorphic to the endomorphism
  semirings $\mathbf{E}_M$ of nonzero finite distributive lattices
  $M$.
\end{problem}

We finish this section considering two more applications -- to
Conjecture and Problem~3.9 of \cite{kat:thcos} and \cite{kat:ofsos},
respectively -- of Theorem~5.7.  But first recall
(\cite[Definition~3.1]{kat:tpaieosoars} or \cite{kat:thcos}) that the
tensor product bifunctor $-\otimes-: {\mathcal{M}_R\times {}_R\mathcal{M}
\longrightarrow \mathcal{M}}$ of a right semimodule $A\in
|\mathcal{M}_R|$ and a left semimodule ${B\in |_R\mathcal{M}|}$ can be
described as the factor monoid $F/\sigma$ of the free monoid ${F\in
|\mathcal{M}|}$ generated by the cartesian product $A\times B$ and
factorized with respect to the congruence $\sigma$ on $F$ generated
by ordered pairs having the form
\begin{equation*}
  \langle (a_1\!+\!a_2,b), (a_1,b)+(a_2,b)\rangle,\ 
  \langle (a,b_1\!+\!b_2), (a,b_1)+(a,b_2)\rangle,\
  \langle (ar,b),(a,rb)\rangle,
\end{equation*}
with $a,a_1,a_2\in A$, $b,b_1,b_2\in B$ and $r\in R$.

A semimodule $B\in |_R\mathcal{M}|$ is said to be \textit{flat} \cite
{kat:ofsos} iff the functor $-\otimes B: {\mathcal{M}_R\longrightarrow
\mathcal{M}}$ preserves finite limits or iff a semimodule $B$ is a
filtered (directed) colim of finitely generated free (even projective)
semimodules \cite[Theorem~2.10]{kat:ofsos}; a semiring $R$ is left
(right) \textit{perfect} \cite{kat:thcos} iff every flat left (right)
$R$-semimodule is projective.

In \cite[Corollary~5.12]{kat:thcos}, it was shown that in the class of
additively regular commutative semirings, perfect semirings are just
perfect rings, and proposed the conjecture that the same situation
would take place in the entire, not only commutative, class of
additively regular semirings.  Then this conjecture has been confirmed
for additively regular semisimple semirings in
\cite[Theorem~5.2]{knt:mosssparp}.  By using Theorem~5.7, we confirm
the conjecture for the class of simple semirings with either infinite
elements or projective minimal left (right) ideals.

\begin{theorem}
  A proper simple semiring $R$ with either the infinite element or a
  projective minimal left (right) ideal is not left (right) perfect.
\end{theorem}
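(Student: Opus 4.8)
The plan is to reduce both cases to a single normal form and then exploit Morita equivalence with the Boolean semifield. First I would pin down the structure of $R$. If $R$ has the infinite element, Theorem~5.7 gives at once $R\cong\mathbf{E}_M$ for a nonzero finite distributive lattice $M$. If instead $R$ has a projective minimal left (right) ideal, Theorem~5.11 leaves only the possibilities $R\cong M_n(F)$ over a division ring $F$ or $R\cong\mathbf{E}_M$; since $R$ is proper (not a ring) the first option is excluded, so again $R\cong\mathbf{E}_M$ with $M$ a nonzero finite distributive lattice. In either case the implication (iii)$\Rightarrow$(ii) of Theorem~5.7 shows that $R$ is Morita equivalent to the Boolean semiring $\mathbf{B}$.

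Next I would invoke the fact that being left (right) perfect is a Morita invariant. This is because flatness -- characterized intrinsically as being a filtered colimit of finitely generated projective semimodules via \cite[Theorem~2.10]{kat:ofsos} -- and projectivity are categorical properties, preserved and reflected by any equivalence of semimodule categories; hence the implication ``flat $\Rightarrow$ projective'' defining perfectness transports along the equivalences ${}_R\mathcal{M}\simeq{}_{\mathbf{B}}\mathcal{M}$ and $\mathcal{M}_R\simeq\mathcal{M}_{\mathbf{B}}$, the latter being available by \cite[Theorem~4.12]{kn:meahcos}. Since $\mathbf{B}$ is commutative its left and right module theories coincide, so it suffices to prove the single statement that $\mathbf{B}$ is not perfect.

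Finally, to show $\mathbf{B}$ is not perfect I would exhibit one flat but non-projective $\mathbf{B}$-semimodule. Let $C:=\mathbb{N}\cup\{\top\}$ be the chain $0<1<2<\cdots<\top$ of order type $\omega+1$, regarded as a join-semilattice with least element $0$. For each $n$ the finite chain $C_n:=\{0<1<\cdots<n<\top\}$ is a finite distributive lattice, hence a finitely generated projective $\mathbf{B}$-semimodule by the Horn--Kim criterion \cite[Theorem~5.3]{horn-kim:cos}; the inclusions $C_n\hookrightarrow C_{n+1}$ are join-preserving, and their directed colimit is $\varinjlim_n C_n=C$. Thus $C$ is a filtered colimit of finitely generated projectives and is therefore flat. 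On the other hand $C$ is a distributive lattice in which $\{x\in C\mid x\le\top\}=C$ is infinite, so by the same Horn--Kim criterion $C$ is not projective. Hence $C$ is a flat non-projective $\mathbf{B}$-semimodule, $\mathbf{B}$ is not perfect, and by the reduction above neither is $R$.

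I expect the main obstacle to be the Morita-invariance step: one must verify carefully that the chosen equivalence carries flat semimodules to flat semimodules and non-projective ones to non-projective ones, i.e. that ``flat'' is genuinely intrinsic to the module category (the cleanest route being the ``filtered colimit of finitely generated projectives'' description, each ingredient of which -- filtered colimit, finite generation, projectivity -- is categorical). A secondary point needing care is the explicit identification of $\varinjlim_n C_n$ with $C$ inside ${}_{\mathbf{B}}\mathcal{M}$, which rests on the fact that filtered colimits in this variety are computed on underlying sets, so that $\top$ lands above every natural number in the colimit.
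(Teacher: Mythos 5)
Your proof is correct, and its skeleton coincides with the paper's: both arguments reduce the two hypotheses, via Theorems~5.7 and~5.11 (with properness excluding the matrix-ring alternative in the second case), to the statement that $R$ is Morita equivalent to $\mathbf{B}$, then transfer perfectness across the equivalence, and conclude from the non-perfectness of $\mathbf{B}$. The genuine difference is in how the last two ingredients are obtained. The paper cites \cite[Lemma~4.10 and Proposition~5.12]{kn:meahcos} for the fact that the equivalence functors restrict to equivalences between the subcategories of projective and of flat semimodules, and cites \cite[Theorem~5.11]{kat:thcos} (see also \cite[Theorem~5.2]{knt:mosssparp}) for the fact that $\mathbf{B}$ is not perfect. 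You instead argue the invariance categorically, using the characterization of flatness as ``filtered colimit of finitely generated projectives'' from \cite[Theorem~2.10]{kat:ofsos}, and you prove the non-perfectness of $\mathbf{B}$ from scratch with the chain $C=\mathbb{N}\cup\{\top\}$ of order type $\omega+1$: each finite chain $C_n=\{0<1<\dots<n<\top\}$ is a finite distributive lattice, hence a finitely generated projective $\mathbf{B}$-semimodule by \cite[Theorem~5.3]{horn-kim:cos}; their directed union in $_{\mathbf{B}}\mathcal{M}$ is $C$ (filtered colimits in a variety are computed on underlying sets), so $C$ is flat; yet $C$ is a distributive lattice whose principal downset $\{x\in C\mid x\le\top\}=C$ is infinite, so the same criterion shows $C$ is not projective. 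This example is sound and buys you a self-contained proof where the paper delegates to the literature. The one step you rightly flag as delicate -- that flatness and projectivity (hence the implication defining perfectness) transport along the equivalence, which needs finite generation to be preserved as well -- is precisely what the paper's citation of \cite[Lemma~4.10 and Proposition~5.12]{kn:meahcos} supplies; if you prefer not to develop the categorical invariance of finite generation yourself, you can simply invoke those results, as the paper does, and keep your explicit witness for the non-perfectness of $\mathbf{B}$.
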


\begin{proof}
  First consider the case of a semiring $R$ with the infinite element
  $\infty\in R$.  By Theorem~5.7, $R$ is Morita equivalent to the
  Boolean semiring $\mathbf{B}$.  Therefore, by
  \cite[Theorem~4.12]{kn:meahcos} the semimodule categories
  $_R\mathcal{M}$ and $_{\mathbf{B}}\mathcal{M}$ are equivalent,
  \textit{i.e.}, there exist two (additive) functors $F:
  _R\mathcal{M}\longrightarrow {}_{\mathbf{B}}\mathcal{M}$ and $G:
  _{\mathbf{B}}\mathcal{M}\longrightarrow {}_R\mathcal{M}$, and
  natural isomorphisms $\eta: GF\longrightarrow {\rm
    Id}_{_R\mathcal{M}}$ and $\xi: FG\longrightarrow {\rm
    Id}_{_{\mathbf{B}}\mathcal{M}}$.  By \cite[Lemma~4.10 and
  Proposition~5.12]{kn:meahcos}, the functors $F$ and $G$ establish
  the equivalences between the subcategories of projective and flat
  left semimodules of the categories $_R\mathcal{M}$ and
  $_{\mathbf{B}}\mathcal{M}$, respectively.  However, by \cite[Theorem
  5.11]{kat:thcos} (see also \cite[Theorem~5.2]{knt:mosssparp})
  $\mathbf{B}$ is not a perfect semiring and, hence, $R$ is not
  perfect, too.

  In the case of a semiring $R$ with a projective minimal left (right)
  ideal, from Theorem~5.11 follows that $R\cong\mathbf{E}_M$ for some
  nonzero finite distributive lattice~$M$ and, therefore, by
  Theorem~5.7, $R$ is Morita equivalent to $\mathbf{B}$, and, as it
  was shown in the first case above, $R$ is not a perfect semiring.
\end{proof}

Using Corollary~5.13 and taking into consideration, for example, 
\cite[Theorems~3.10 and~23.10]{lam:afcinr}, one has

\begin{corollary}
  A simple semiring with a projective minimal left (right) ideal $R$
  is perfect iff it is an artinian simple ring.
\end{corollary}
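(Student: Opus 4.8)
The plan is to deduce this corollary from the structural dichotomy of Theorem~5.11 together with Theorem~5.15, so that almost all of the content is already in place. By Theorem~5.11, a simple semiring $R$ possessing a projective minimal left (right) ideal is isomorphic either to a matrix semiring $M_n(F)$, $n\ge 1$, over a division ring $F$, or to the endomorphism semiring $\mathbf{E}_M$ of a nonzero finite distributive lattice $M$. The first alternative is an artinian simple ring, whereas the second is a proper (non-ring) semiring. Thus the two implications of the corollary reduce to showing that the ring case is perfect while the proper-semiring case is not.

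For the backward implication, suppose $R$ is an artinian simple ring. First I would note that when $R$ is a ring every unital $R$-semimodule is automatically an $R$-module: from $0=(1+(-1))m=m+(-1)m$ each element acquires an additive inverse, so the category $_R\mathcal{M}$ coincides with the category of $R$-modules, and with it the notions of flatness and projectivity. Hence the semiring-theoretic perfectness of a ring is precisely its ring-theoretic perfectness. Since $M_n(F)$ over a division ring $F$ is semisimple by the Wedderburn--Artin theorem (cf.~\cite[Theorem~3.10]{lam:afcinr}), every left module over it, and in particular every flat one, is projective; therefore it is perfect (cf.~\cite[Theorem~23.10]{lam:afcinr}).

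For the forward implication, suppose $R$ is perfect. Were $R$ not an artinian simple ring, then by the dichotomy above it would be isomorphic to $\mathbf{E}_M$ for some nonzero finite distributive lattice $M$, that is, a proper simple semiring with a projective minimal left (right) ideal. But then Theorem~5.15 forces $R$ to be not left (right) perfect, contradicting our hypothesis. Consequently $R\cong M_n(F)$ for a division ring $F$, an artinian simple ring, as claimed.

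The real labour has already been carried out in Theorems~5.11 and~5.15, so the corollary is essentially a bookkeeping combination of them; the only step requiring genuine care is verifying that the semiring notion of `perfect' specializes correctly in the ring case, which rests on the elementary observation that semimodules over a ring are modules.
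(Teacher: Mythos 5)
Your proposal is correct and follows essentially the same route as the paper: the structural dichotomy (Theorem~5.11/Corollary~5.13) splits the problem into the ring case, handled by Lam's Theorems~3.10 and~23.10, and the proper-semiring case $R\cong\mathbf{E}_M$, ruled out by Theorem~5.15. Your explicit verification that semimodules over a ring are modules (so that semiring-perfectness agrees with ring-perfectness) is a detail the paper leaves implicit, but it is the same argument.
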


A semimodule $G\in |_R\mathcal{M}|$ is said to be \textit{mono-flat}
\cite {kat:ofsos} iff $\mu \otimes 1_{G}: F_1\otimes G\rightarrowtail
F\otimes G$ is a monomorphism in $\mathcal{M}$ for any monomorphism
$\mu: F_1\rightarrowtail F$ of right semimodules $F_1,F\in
|\mathcal{M}_R|$.  By \cite[Proposition~2.1 and
Theorem~2.10]{kat:ofsos}, every flat left semimodule is mono-flat, but
the converse is not true \cite[Example~3.7]{kat:ofsos} (see also
\cite[Theorem~5.19]{kn:meahcos}); and the question of describing
and/or characterizing semirings such that the concepts of
`mono-flatness' and `flatness' for semimodules over them coincide
constitutes a natural and quite interesting problem
\cite[Problem~3.9]{kat:ofsos}.  For additively regular semisimple
semirings this problem has been solved in
\cite[Theorem~5.19]{kn:meahcos}, and in our next result we positively
resolve this problem for the class of simple semirings with either
infinite elements or projective minimal left (right) ideals,
namely:

\begin{theorem}
  For left (right) $R$-semimodules over a simple semiring $R$ with
  either the infinite element or a projective minimal left (right)
  ideal, the concepts of `mono-flatness' and `flatness' are the same.
\end{theorem}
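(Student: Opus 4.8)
The plan is to reduce every case to a Morita equivalence with the Boolean semiring $\mathbf{B}$ and then transport the desired coincidence across the equivalence, mirroring the scheme of the proof of Theorem~5.15. Since flatness always implies mono-flatness by \cite[Proposition~2.1]{kat:ofsos}, only the converse implication needs to be proved. First I would separate the ring case: if $R$ has a projective minimal one-sided ideal then, by Theorem~5.11, either $R\cong M_n(F)$ for a division ring $F$ or $R\cong\mathbf{E}_M$ for a nonzero finite distributive lattice $M$. When $R\cong M_n(F)$ the semiring $R$ is a ring, for which mono-flatness of a module means exactly that tensoring preserves monomorphisms, and this is precisely flatness; thus the two notions coincide classically (and in fact every module over the simple artinian ring $M_n(F)$ is flat). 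In the remaining subcase $R\cong\mathbf{E}_M$, and likewise whenever $R$ has an infinite element, Theorem~5.7 yields that $R$ is Morita equivalent to $\mathbf{B}$, so it suffices to treat this situation.

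For the base semiring $\mathbf{B}$ I would invoke that $\mathbf{B}$ is additively idempotent, hence additively regular, and semisimple, so that the coincidence of mono-flatness and flatness for $\mathbf{B}$-semimodules is exactly \cite[Theorem~5.19]{kn:meahcos}. It then remains to transfer this coincidence from $\mathbf{B}$ to $R$ along the equivalence functors $F\colon{}_R\mathcal{M}\to{}_{\mathbf{B}}\mathcal{M}$ and $G\colon{}_{\mathbf{B}}\mathcal{M}\to{}_R\mathcal{M}$. For flatness the needed invariance is already furnished, exactly as in the proof of Theorem~5.15, by \cite[Lemma~4.10 and Proposition~5.12]{kn:meahcos}, which say that $F$ and $G$ restrict to an equivalence between the flat subcategories.

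The core new ingredient is the Morita invariance of mono-flatness, which I expect to be the main obstacle. I would realise the equivalence through a progenerator, writing $G$ as $G\cong Q\otimes_{\mathbf{B}}-$ for the associated $R$-$\mathbf{B}$-bisemimodule $Q$ (available from the tensor--hom description behind \cite[Theorem~4.12]{kn:meahcos}), so that the \emph{same} progenerator simultaneously induces an equivalence $-\otimes_R Q\colon\mathcal{M}_R\to\mathcal{M}_{\mathbf{B}}$ of the right-semimodule categories. Using associativity of the semimodule tensor product one obtains a natural isomorphism
\begin{equation*}
  A\otimes_R G(N)\ \cong\ (A\otimes_R Q)\otimes_{\mathbf{B}}N
\end{equation*}
for $A\in|\mathcal{M}_R|$ and $N\in|{}_{\mathbf{B}}\mathcal{M}|$. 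Because any category equivalence preserves and reflects monomorphisms, the assignment $\mu\mapsto\mu\otimes_R Q$ carries monomorphisms of right $R$-semimodules to monomorphisms of right $\mathbf{B}$-semimodules and, being essentially surjective, realises every monomorphism of $\mathcal{M}_{\mathbf{B}}$ up to isomorphism; hence $\mu\otimes 1_{G(N)}$ is a monomorphism for all $\mu$ if and only if $\nu\otimes 1_N$ is a monomorphism for all monomorphisms $\nu$ of $\mathcal{M}_{\mathbf{B}}$, that is, $G(N)$ is mono-flat over $R$ iff $N$ is mono-flat over $\mathbf{B}$. The symmetric argument shows that $F$ preserves mono-flatness as well.

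Assembling the pieces: given a mono-flat ${}_RB$, the functor $F$ produces a mono-flat $\mathbf{B}$-semimodule $F(B)$, which is flat over $\mathbf{B}$ by the base case; then $G$ returns a flat $R$-semimodule $GF(B)\cong B$, so ${}_RB$ is flat. The right-handed statement follows by the left--right symmetry of Morita equivalence recorded in \cite[Theorem~4.12]{kn:meahcos}. The principal difficulty, as noted, is establishing the tensor-compatibility isomorphism displayed above together with the accompanying fact that the right-semimodule equivalence induced by the same progenerator is the correct tensor partner; once this naturality is secured, mono-preservation by an equivalence completes the transfer.
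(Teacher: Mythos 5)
Your proposal is correct and follows the same overall skeleton as the paper's proof: split into the artinian simple ring case and the case where $R$ is Morita equivalent to $\mathbf{B}$ (via Theorems~5.7 and~5.11), dispose of the ring case classically, establish the coincidence over the base semiring $\mathbf{B}$, and transport it across the equivalence. The differences lie in how the two key ingredients are supplied. For the transfer step, the paper simply cites \cite[Proposition~5.12]{kn:meahcos}, which already asserts that the equivalence functors $F$ and $G$ restrict to equivalences between the subcategories of mono-flat (and of flat) semimodules; you instead re-derive the Morita invariance of mono-flatness by hand, realizing $G$ as $Q\otimes_{\mathbf{B}}-$ for a progenerator bisemimodule $Q$, pairing it with the right-sided equivalence $-\otimes_R Q$, and using associativity of the tensor product together with the fact that equivalences preserve and reflect monomorphisms. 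Your argument is sound, but note that it is essentially a proof of the very statement the paper cites, and it leans on the tensor-functor realization of semiring Morita equivalences from \cite{kn:meahcos}, which is itself nontrivial machinery -- so the gain in self-containedness is partly offset by what you must import to set it up. For the base case, the paper invokes \cite[Theorem~3.2]{kat:ofsos} directly for $\mathbf{B}$-semimodules, while you deduce it from \cite[Theorem~5.19]{kn:meahcos} using that $\mathbf{B}$ is additively idempotent (hence additively regular) and semisimple; both citations are legitimate, the paper's being the more economical. In short: same route, but where the paper assembles three citations, you prove the central invariance lemma yourself, which makes the argument more transparent at the price of redundancy with the cited literature.
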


\begin{proof}
  By Theorems~5.7 and~5.11, the semiring $R$ is an artinian simple
  ring or Morita equivalent to the semiring $\mathbf{B}$.  In the
  first case, the statement is obvious.  If $R$ is Morita equivalent
  to $\mathbf{B}$, in the notations introduced in the proof of
  Theorem~5.15 and applying \cite[Proposition~5.12]{kn:meahcos}, we
  have that the functors $F$ and $G$ establish the equivalences
  between the subcategories of mono-flat and flat left semimodules of
  the categories $_R\mathcal{M}$ and $_{\mathbf{B}}\mathcal{M}$,
  respectively.  However, by \cite[Theorem~3.2]{kat:ofsos} the
  concepts of `mono-flatness' and `flatness' for
  $\mathbf{B}$-semimodules coincide and, therefore, they are the same
  for left (right) $R$-semimodules, too.
\end{proof}

\section{Simpleness of Additively Idempotent Chain Semirings}

Obviously, the additive reduct $(R,+,0)$ of an additively idempotent
semiring~$R$ in fact forms an upper semilattice and there exists the
partial ordering $\le$ on $R$ defined for any two elements $x,y\in R$
by $x\le y$ iff $x+y=y$.  If any two elements $x,y\in R$ of the poset
$(R,\le)$ are comparable, \textit{i.e.}, either $x\le y$ or $y\le x$,
the partial order relation $\le$ is said to be total, $(R,\le)$ forms
a chain, and the semiring $R$ is called an \textit{additively
  idempotent chain semiring} or, in short, \textit{aic-semiring}.  In
this final and short section, we conclude the paper considering the
congruence- and ideal-simpleness concepts in the context of artinian
simple aic-semirings and congruence-simple lattice-ordered semirings
\cite[Section~21]{golan:sata}.  In particular, we show that the only
$(\max, \cdot)$-division semirings over totally-ordered
multiplicative groups \cite[Example~4.28]{golan:sata} are left (right)
artinian ideal-simple aic-semirings.  But first let us make some
necessary observations.\medskip

\begin{lemma}
  Any left (right) artinian semiring $R$ is Dedekind-finite,
  \textit{i.e.}, ${ab=1}$ implies ${ba=1}$ for any $a,b\in R$.
\end{lemma}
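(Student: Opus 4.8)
The plan is to transplant the classical one-sided-Artinian argument for rings, whose whole point is that it never uses additive inverses and so survives the passage to semirings. By the symmetry of the statement it suffices to treat the left-artinian case; the right-artinian case is handled verbatim with sides reversed. So suppose $R$ is left artinian and $ab=1$; I want to produce $ba=1$.

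First I would record the elementary multiplicative fact that $a^nb^n=1$ for every $n\ge 1$, proved by the telescoping induction $a^nb^n=a^{n-1}(ab)b^{n-1}=a^{n-1}b^{n-1}$ with base case $ab=1$. Next I would form the principal left ideals $Ra^n=\{ra^n\mid r\in R\}$. Each is genuinely a subsemimodule of $_RR$: it is closed under addition because $r_1a^n+r_2a^n=(r_1+r_2)a^n$ by distributivity, closed under left multiplication, and contains $0$. Moreover the chain $Ra\supseteq Ra^2\supseteq\cdots$ is descending, since $ra^{n+1}=(ra)a^n\in Ra^n$. Because $R$ is left artinian (the descending chain condition holds on subsemimodules of $_RR$), this chain stabilizes: there is an $N$ with $Ra^N=Ra^{N+1}$.

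From $a^N\in Ra^N=Ra^{N+1}$ I obtain an element $r\in R$ with $a^N=ra^{N+1}$. Right-multiplying by $b^N$ and using $a^Nb^N=1$ together with $a^{N+1}b^N=a(a^Nb^N)=a$, this collapses to $1=ra$. Thus $a$ now has the left inverse $r$ as well as the right inverse $b$ coming from $ab=1$, and the usual one-line reconciliation $r=r(ab)=(ra)b=b$ forces $r=b$, whence $ba=ra=1$. The right-artinian case runs identically using the descending chain of principal right ideals $bR\supseteq b^2R\supseteq\cdots$, which stabilizes to give $br=1$, and then $a=a(br)=(ab)r=r$ yields $ba=br=1$.

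The content here is almost entirely conceptual rather than computational: the only thing to watch is that every step stays within the multiplicative and additive-monoid structure of the semiring. The potential obstacle in the ring proof—where one is tempted to subtract, e.g. to pass to an idempotent $e=ba$ and argue with $1-e$—is precisely what I avoid by phrasing everything through stabilization of the chains $Ra^n$ (resp.\ $b^nR$). Once one checks, as above, that these are legitimately descending chains of subsemimodules, the left-artinian hypothesis does all the work, so I expect no serious difficulty beyond this verification.
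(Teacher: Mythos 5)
Your proposal is correct and is essentially the paper's own proof: both use the descending chain $Ra \supseteq Ra^2 \supseteq \cdots$, stabilization $Ra^m = Ra^{m+1}$ to write $a^m = ca^{m+1}$, right multiplication by $b^m$ with $a^mb^m = 1$ to get $ca = 1$, and then the identity $b = (ca)b = c(ab) = c$ to conclude $ba = 1$. The only cosmetic differences are notation ($r$ versus $c$) and that you spell out the symmetric right-artinian case explicitly.
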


\begin{proof}
  Let $R$ be a left artinian semiring, $ab=1$ for some $a,b\in R$, and
  $\ Ra^n\supseteq Ra^{n+1}$ for any $n\in \mathbb{N}$.  Then, for
  some $m\in \mathbb{N}$, $m\ge 1$, we have $Ra^{m}=Ra^{m+1}$, and,
  hence, $a^{m}=ca^{m+1}$ for some $c\in R$.  From $ab=1$ follows
  $a^{m}b^{m}=1$ , and, therefore, $1=a^{m}b^{m}=ca^{m+1}b^{m}=ca$.
  The latter implies $ b=1b=(ca)b=c(ab)=c1=c$, and, hence, $ba=1$.
\end{proof}

\begin{lemma}
  For an aic-semiring $R$, the subset $J:=\{ a\in R\mid \forall r\in
  R: {ra\ne 1}\}\subseteq R$ of the semiring $R$\ is the only maximal
  left ideal of $R$; and therefore, $J={\rm Rad}(_RR)$.
\end{lemma}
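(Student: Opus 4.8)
The plan is to prove both assertions at once by showing that $J$ is a proper left ideal and that it \emph{contains} every proper left ideal of $R$; the uniqueness of the maximal left ideal then drops out formally, and the radical identity follows from the definition recalled in Section~2. Throughout I would assume $R$ is nontrivial (i.e.\ $0\ne 1$), the statement being vacuous otherwise. The only structural feature of an aic-semiring I intend to use is that, for the chain order $\le$ given by $x\le y \Leftrightarrow x+y=y$, addition is the join, so that $a+b\in\{a,b\}$ for all $a,b\in R$.

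First I would check that $J=\{a\in R\mid ra\ne 1 \text{ for all } r\in R\}$, the set of elements with no left inverse, is a proper left ideal. It contains $0$, since $r0=0\ne 1$ for every $r$, and it omits $1$, since $1\cdot 1=1$; hence $J\ne R$. Closure under addition is exactly where totality is needed: for $a,b\in J$ the element $a+b$ equals either $a$ or $b$ because $R$ is a chain, so $a+b\in J$. Closure under left multiplication is purely formal: if $a\in J$ but $ra\notin J$, then $s(ra)=1$ for some $s\in R$, whence $(sr)a=1$ by associativity, contradicting $a\in J$; thus $ra\in J$. Therefore $J$ is a proper subsemimodule of ${}_RR$, that is, a proper left ideal.

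Next I would show that every proper left ideal $I$ satisfies $I\subseteq J$ (this step uses no chain hypothesis). If some $a\in I$ lay outside $J$, then $a$ would have a left inverse, say $ra=1$, and since $I$ is a left ideal we would get $1=ra\in I$, forcing $I=R$ against properness; hence $I\subseteq J$. Combining the two steps, $J$ is a proper left ideal containing all proper left ideals, so it is the largest one. It is therefore maximal (any left ideal strictly above $J$ cannot be proper, hence equals $R$) and it is the unique maximal left ideal (any maximal left ideal $M$ is proper, so $M\subseteq J\subsetneq R$, forcing $M=J$).

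Finally, the equality $J={\rm Rad}(_RR)$ is immediate from the intersection description of the radical in Section~2: the subsemimodules of ${}_RR$ are precisely the left ideals, so the maximal subsemimodules of ${}_RR$ are exactly the maximal left ideals, and since $J$ is the only one, the intersection defining ${\rm Rad}(_RR)$ collapses to $J$. I expect no real obstacle here; the single genuinely essential use of the aic-structure is the additive closure of $J$, and everything else is formal, so the main task is just to make that closure step explicit and verify the invertibility arguments carefully.
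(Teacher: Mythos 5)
Your proof is correct and follows essentially the same route as the paper: show that $J$ is a proper left ideal, using the chain property for closure under addition and associativity for closure under left multiplication, after which maximality, uniqueness, and the identity $J={\rm Rad}(_RR)$ are formal. The only cosmetic difference is that you obtain additive closure directly from $a+b\in\{a,b\}$, whereas the paper argues by contradiction from $1=r(a+b)=ra+rb$ forcing $ra=1$ or $rb=1$; both rest on exactly the same totality of the order.
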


\begin{proof}
  Obviously, we need only to show that $J$ is a left ideal of $R$.
  Indeed, if $a+b\notin J$ for some $a,b\in J$, then there exists
  $r\in R$ such that $1=r(a+b)=ra+rb$ and, since $R$ is an
  aic-semiring, $ra=1$ or $rb=1$, what contradicts $a,b\in J$.
  It is clear that $ra\in R$ for any $a\in J$ and $r\in R$.
\end{proof}

In contrast to the ring case (see, \textit{e.g.},
\cite[Corollary~2.4.2]{lam:afcinr}), the radical ${\rm Rad}(_RR)$ of a
semiring $R$ in general is not an ideal of $R$ \cite[Remark,
p.\,134]{tuennam:oros}; however, for artinian aic-semirings it is not a
case, namely:

\begin{lemma}
  For a left (right) artinian aic-semiring $R$, the radical ${\rm
    Rad}(_RR)$ is an ideal of $R$, and ${\rm Rad}(_RR)={\rm
    Rad}(R_R)$.
\end{lemma}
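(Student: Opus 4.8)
The plan is to reduce both assertions to a single statement about one-sided invertibility, exploiting the fact that for an aic-semiring the radical has already been computed explicitly in Lemma~6.2. First I would invoke Lemma~6.2 to identify ${\rm Rad}({}_RR)$ with the set $J = \{a\in R \mid \forall r\in R:\ ra\ne 1\}$ of elements possessing no left inverse, which that lemma already exhibits as the (unique maximal) left ideal of $R$. By the evident right-handed mirror of Lemma~6.2, ${\rm Rad}(R_R)$ coincides with the set $J' = \{a\in R \mid \forall r\in R:\ ar\ne 1\}$ of elements possessing no right inverse, and $J'$ is correspondingly a right ideal. Thus both conclusions of the lemma follow at once once I establish the single equality $J = J'$.

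The key step is to show that, under the artinian hypothesis, an element of $R$ has a left inverse precisely when it has a right inverse. This is exactly where the artinian assumption enters: by Lemma~6.1 a left (right) artinian semiring is Dedekind-finite, so $xy=1$ forces $yx=1$ for all $x,y\in R$. Consequently, if $ra=1$ then taking $x=r$, $y=a$ yields $ar=1$, and symmetrically $ar=1$ gives $ra=1$; hence the two invertibility conditions are equivalent and their complementary sets $J$ and $J'$ coincide.

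Having secured $J=J'$, the two claims drop out immediately. The chain ${\rm Rad}({}_RR)=J=J'={\rm Rad}(R_R)$ is precisely the second assertion, while the first follows because $J=J'$ is simultaneously a left ideal (by Lemma~6.2) and a right ideal (by its mirror image), hence a two-sided ideal of $R$.

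I do not anticipate a genuine obstacle here; the only point demanding care is that Dedekind-finiteness is the sole bridge linking the left- and right-sided radicals, so the artinian hypothesis must be used exactly once, at the invertibility-equivalence step, with everything else being pure transport of Lemma~6.2 and its symmetric counterpart. If a subtlety surfaces at all, it would be in verifying that the mirror of Lemma~6.2 is genuinely available for the right regular semimodule $R_R$, but since the defining argument of Lemma~6.2 is left/right symmetric this is routine.
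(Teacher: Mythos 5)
Your proposal is correct and rests on exactly the same two pillars as the paper's proof: the identification of ${\rm Rad}(_RR)$ with $J$ from Lemma~6.2 (and its right-sided mirror) together with Dedekind-finiteness from Lemma~6.1. The paper arranges the argument slightly differently -- it shows by contradiction that $J$ is closed under right multiplication (if $ar\notin{\rm Rad}(_RR)$ then $s(ar)=1$ for some $s$, whence $(rs)a=1$ by Lemma~6.1, contradicting $a\in{\rm Rad}(_RR)$) -- but the substance is identical; if anything, your reduction to the single equality $J=J'$ is the cleaner packaging, since the paper's written proof only establishes the ideal claim explicitly and leaves the equality ${\rm Rad}(_RR)={\rm Rad}(R_R)$ to the reader, whereas your argument delivers both assertions at once.
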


\begin{proof}
  Indeed, if $ar\notin {\rm Rad}(_RR)$ for some $a\in{\rm Rad}(_RR)$
  and $r\in R$, then by Lemma~6.2 there exists $s\in R$ such that
  $s(ar)=1$, and hence, using Lemma~6.1, one has $(rs)a=1$.  From the
  latter, it follows that $a\notin{\rm Rad}(_RR)$ , what
  contradicts to $a\in{\rm Rad}(_RR)$.
\end{proof}

Now we are ready to describe all ideal-simple artinian
aic-semirings.

\begin{theorem}
  A left (right) artinian aic-semiring $R$ is ideal-simple
  iff it is a division aic-semiring.
\end{theorem}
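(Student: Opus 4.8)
The plan is to prove both implications directly, with the forward direction resting on the three preceding lemmas, so essentially no new machinery is required.

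For the routine direction ($\Longleftarrow$), I would verify that any division aic-semiring is ideal-simple. Given a nonzero two-sided ideal $I$, I pick a nonzero $a\in I$; since every nonzero element of a division semiring is invertible, $a^{-1}\in R$ and $1=a^{-1}a\in I$. Then $r=r\cdot 1\in I$ for every $r\in R$, so $I=R$, and therefore $0$ and $R$ are the only ideals of $R$.

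For the substantive direction ($\Longrightarrow$), suppose $R$ is a left (right) artinian aic-semiring that is ideal-simple. By Lemma~6.2 the set $J=\{a\in R\mid \forall r\in R:\ ra\ne 1\}$ coincides with ${\rm Rad}(_RR)$ and is the unique maximal left ideal of $R$; by Lemma~6.3 it is in fact a two-sided ideal. Since $1\cdot 1=1$, we have $1\notin J$, so $J$ is a \emph{proper} ideal of $R$, and the ideal-simpleness of $R$ forces $J=0$. Unwinding the definition, the equality $J=0$ says precisely that no nonzero element satisfies the defining condition of $J$; that is, for every nonzero $a\in R$ there exists $r\in R$ with $ra=1$. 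Hence every nonzero element of $R$ has a left inverse. It then remains only to upgrade left inverses to two-sided ones: for a nonzero $a$ with $ra=1$, Lemma~6.1 (Dedekind-finiteness of artinian semirings) gives $ar=1$, so $r$ is a genuine inverse of $a$. Thus every nonzero element of $R$ is invertible and $R$ is a division aic-semiring.

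I do not anticipate a serious obstacle, as the argument is largely bookkeeping once the three lemmas are available. The two points demanding care are confirming that $J$ is \emph{proper}, so that ideal-simpleness collapses it to $0$ rather than to $R$ (this is exactly the observation $1\notin J$), and reading the set-theoretic assertion $J=0$ correctly as ``every nonzero element admits a left inverse''; the Dedekind-finiteness of Lemma~6.1 then does the rest.
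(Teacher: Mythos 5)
Your proof is correct and takes essentially the same route as the paper: ideal-simpleness collapses the proper two-sided ideal $J={\rm Rad}(_RR)$ supplied by Lemmas~6.2 and~6.3 to zero, so every nonzero element has a left inverse, which is then upgraded to a two-sided inverse. The paper's own proof is just a terser statement of this same argument (citing Lemmas~6.2 and~6.3 and leaving the final upgrade step, which you handle via Lemma~6.1, implicit).
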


\begin{proof}
  $\Longrightarrow$.  Using the ideal-simpleness of $R$, the result
  immediately follows from Lemmas 6.3 and 6.2.

  $\Longleftarrow$.  It is obvious.
\end{proof}

For a division aic-semiring $R$ is obviously zerosumfree, there always
exists the surjection $R\twoheadrightarrow\mathbf{B}$, and
therefore, from Theorem~6.4 we immediately obtain

\begin{corollary}
  A left (right) artinian aic-semiring $R$ is simple iff
  $R\cong\mathbf{B}$.
\end{corollary}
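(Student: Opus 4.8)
The plan is to prove the two implications separately, with essentially all of the content living in the forward direction. For the backward implication I would only need to check that $\mathbf{B}$ itself qualifies: it is the two-element chain $0\le 1$, hence a left and right artinian aic-semiring, its only ideals are $0$ and $\mathbf{B}$, and its only congruences are $\vartriangle_{\mathbf{B}}$ and $\mathbf{B}^2$, so it is simultaneously ideal- and congruence-simple, \textit{i.e.}, simple. Thus any $R\cong\mathbf{B}$ is a simple artinian aic-semiring, and this direction is immediate.

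For the forward implication, suppose $R$ is a simple left (right) artinian aic-semiring. Since simpleness entails ideal-simpleness, Theorem~6.4 applies and gives that $R$ is a \emph{division} aic-semiring. At this point I would invoke the observation recorded just before the statement: a division aic-semiring is zerosumfree (indeed, every additively idempotent semiring is, because $0$ is the least element in the induced order $x\le y\Leftrightarrow x+y=y$), and zerosumfreeness together with the absence of zero divisors in a division semiring makes the map $\varphi:R\to\mathbf{B}$ sending $0\mapsto 0$ and every nonzero element to $1$ a surjective semiring homomorphism.

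The decisive step is then to exploit the second facet of simpleness, namely congruence-simpleness. The surjection $\varphi$ determines its kernel congruence $\ker\varphi=\{(a,b)\mid\varphi(a)=\varphi(b)\}$ on $R$, which by congruence-simpleness must equal either $\vartriangle_R$ or $R^2$. It cannot be $R^2$, since $\varphi$ maps \emph{onto} the two-element semiring $\mathbf{B}$; hence $\ker\varphi=\vartriangle_R$, so $\varphi$ is injective and therefore a bijective homomorphism, \textit{i.e.}, an isomorphism $R\cong\mathbf{B}$.

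Because Theorem~6.4 and the preceding remark already do the heavy lifting, there is no genuine obstacle here; the only point requiring care is assembling the two halves of simpleness correctly: ideal-simpleness is used to pin $R$ down to a division semiring and thereby produce the canonical surjection onto $\mathbf{B}$, while congruence-simpleness is used to force that surjection to be injective. I would also note in passing that $R$ is nontrivial ($1\ne 0$ in a division semiring), so that the image of $\varphi$ really does have two elements and the case $\ker\varphi=R^2$ is genuinely excluded.
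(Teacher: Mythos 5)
Your proof is correct and takes essentially the same route as the paper: the paper's own (one-sentence) justification, stated just before the corollary, is precisely that ideal-simpleness plus Theorem~6.4 makes $R$ a division aic-semiring, that zerosumfreeness then yields the canonical surjection $R\twoheadrightarrow\mathbf{B}$, and that the rest follows -- with congruence-simpleness implicitly forcing that surjection to be injective. You have simply made explicit the details the paper leaves unstated (why the surjection is a homomorphism, and the kernel-congruence dichotomy), so there is nothing to correct.
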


\begin{remark}
  Let $G$ be a totally-ordered multiplicative group and $R:=G\cup
  \{0\}$.  Then, extending the order on $G$ to $R$ by setting $0\le g$
  for any $g\in G$, and defining $0g=g0=0$ for all $g\in G$, one has
  that $(R,\max,\cdot)$ \cite[Section~21]{golan:sata} is a division
  aic-semiring.  And Theorem~6.4 actually says that all left (right)
  artinian ideal-simple aic-semirings can be obtained in such a
  fashion for a suitable group $G$.
\end{remark}

Recall \cite[Section 21]{golan:sata} that a semiring $R$ is
\textit{lattice-ordered} if and only if there also exists a lattice
structure $(R,\vee,\wedge)$ on $R$ such that $a+b=a\vee b$ and $ab\le
a\wedge b$ for all $a,b\in R$ with respect to the partial order
naturally induced by the lattice operations.  From the definition, it
immediately follows that a lattice-ordered semiring is an
ideal-idempotent semiring, and $a=a1\le a\wedge 1\le 1$ for all $a\in
R$, \textit{i.e.}, $1$ is the infinite element in $R$.  Form these
observations and Theorem~5.7, one readily describes all
congruence-simple lattice-ordered semirings.

\begin{theorem}
  For a lattice-ordered semiring $R$ the following statements
  are equivalent:
  \begin{enumerate}[\ \ (i)\ ]
  \item $R$ is congruence-simple;
  \item $R$ is simple;
  \item $R\cong \mathbf{B}$.
  \end{enumerate}
\end{theorem}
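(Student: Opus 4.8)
The plan is to prove the cycle $(i)\Rightarrow(iii)\Rightarrow(ii)\Rightarrow(i)$, with essentially all of the content living in $(i)\Rightarrow(iii)$. The two short implications I would dispose of first: $(ii)\Rightarrow(i)$ is immediate, since by definition a simple semiring is in particular congruence-simple. For $(iii)\Rightarrow(ii)$ I would simply observe that $\mathbf{B}=\{0,1\}$ has only the ideals $0$ and $\mathbf{B}$ and, having exactly two elements, only the two congruences $\vartriangle_{\mathbf{B}}$ and $\mathbf{B}^2$; hence $\mathbf{B}$ is both ideal- and congruence-simple, i.e.\ simple.

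The substance is $(i)\Rightarrow(iii)$. I would begin by recalling the observations made just before the theorem: a lattice-ordered semiring $R$ is additively idempotent with $a+b=a\vee b$, and $1$ is its infinite (top) element, so $0\le a\le 1$ for all $a\in R$; moreover, congruence-simpleness forces $\vartriangle_R\ne R^2$, hence $|R|\ge 2$ and $0\ne 1$. The \emph{key structural observation}, which is the crux of the whole argument, is that for every $a\in R$ the down-set $(a]:=\{x\in R\mid x\le a\}$ is a two-sided ideal of $R$. Closure under addition is clear, since $x,y\le a$ gives $x+y=x\vee y\le a$; closure under multiplication on either side follows from the defining inequality $ab\le a\wedge b$, which yields $rx\le r\wedge x\le x\le a$ and $xr\le x\wedge r\le x\le a$ for any $r\in R$. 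Once this is in place the rest is routine, and I expect the only real obstacle is noticing that the lattice-ordered inequality is exactly what makes the order-ideals into ring-theoretic ideals.

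Assume now, toward a contradiction, that $R\not\cong\mathbf{B}$. Then $|R|\ge 3$, so there is an element $a$ with $0<a<1$, and the ideal $(a]$ is nonzero (it contains $a$) and proper ($1\notin(a]$, since $1\le a$ would force $a=1$). I would then pass to the Bourne congruence $\equiv_{(a]}$ of this two-sided ideal, where $x\equiv_{(a]}y$ iff $x+u=y+v$ for some $u,v\in(a]$. On the one hand $a\equiv_{(a]}0$ (take $u=0$, $v=a$), so $\equiv_{(a]}\,\ne\,\vartriangle_R$. On the other hand $(1,0)\notin\,\equiv_{(a]}$: an equality $1+u=0+v$ with $u,v\le a$ gives $v=1\vee u=1$, whence $1=v\le a<1$, a contradiction; thus $\equiv_{(a]}\,\ne R^2$. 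So $\equiv_{(a]}$ is a nontrivial proper congruence on $R$, contradicting congruence-simpleness. Hence no element lies strictly between $0$ and $1$, i.e.\ $R=\{0,1\}\cong\mathbf{B}$, which is $(iii)$.

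This Bourne-congruence computation already pins down $R\cong\mathbf{B}$ directly; I would note in passing that the same reasoning shows $R$ is ideal-simple, so that $R$ is a simple semiring with the infinite element $1$ and Theorem~5.7 becomes applicable — but routing through Theorem~5.7 (which would then require checking that the only lattice-ordered semiring of the form $\mathbf{E}_M$ is the one with $|M|=2$) is less economical than the direct argument above.
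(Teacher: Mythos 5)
Your proof is correct, but it takes a genuinely different route from the paper's for the main implication. The paper proves (i) $\Rightarrow$ (ii) $\Rightarrow$ (iii): first it shows congruence-simpleness implies ideal-simpleness by taking an \emph{arbitrary} nonzero ideal $I$, noting its Bourne congruence is non-diagonal, hence universal, so that $1\equiv_I 0$ yields $1=1+a=b\in I$ and $I=R$; then for (ii) $\Rightarrow$ (iii) it invokes Theorem~5.7 (the Morita-theoretic description of simple semirings with an infinite element) to get $R\cong\mathbf{E}_M$ for a finite distributive lattice $M$, and argues that $1_M$ and $e_{0,\infty_M}$ are both the infinite element of $\mathbf{E}_M$, forcing $M=\{0,\infty_M\}$ and $R\cong\mathbf{B}$. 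You instead prove (i) $\Rightarrow$ (iii) directly, and your key observation --- that every principal down-set $(a]$ is a two-sided ideal, precisely because the lattice-order axiom gives $rx\le r\wedge x\le x\le a$ and $xr\le a$ --- does not appear in the paper; applying the Bourne congruence to $(a]$ with $0<a<1$ and showing it is neither $\vartriangle_R$ nor $R^2$ then finishes the argument. The Bourne-congruence computation itself (an equality $1+u=v$ with $u,v$ in the ideal forces $v=1$ because $1$ is the top element) is essentially the same in both proofs; the difference is that your choice of the specific ideal $(a]$ lets you bypass Theorem~5.7 and all of the Section~5 machinery, making the proof self-contained and elementary, whereas the paper's route is shorter given that machinery and exhibits the theorem as a corollary of the general classification. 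One shared caveat: both arguments tacitly exclude the trivial semiring $0=1$; you at least make explicit the convention that congruence-simpleness entails $\vartriangle_R\ne R^2$, which is also needed for the paper's statement to be literally true.
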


\begin{proof}
  (i) $\Longrightarrow$ (ii).  We need only to show that $R$ is
  ideal-simple.  Indeed, let~$I$ be a nonzero ideal of $R$.  Then, the
  Bourne relation on $R$, defined by setting $x\equiv_Iy$ iff there
  exist elements $a,b\in I$ such that $x+a=y+b$, is obviously a
  congruence on $R$, and therefore, $x\equiv_Iy$ for any $x,y\in R$.
  In particular, we have $1\equiv_I0$, \textit{i.e.}, there exist
  elements $a,b\in I$ such that $1=1+a=0+b=b\in I$, what implies that
  $I=R$.

  (ii) $\Longrightarrow$ (iii).  By Theorem~5.7, $R\cong \mathbf{E}_M$
  for some nonzero finite distributive lattice $M$.  From this and
  using the fact that $1$ is the infinite element in $R$, we get that
  $1_M:M\rightarrow M$ is the infinite element in $\mathbf{E}_M$.  On
  the other hand, putting $ \infty_M := \bigvee_{m\in M}m$, it is easy to
  see that $e_{0,\infty_M}$ is also the infinite element in
  $\mathbf{E}_M$.  Hence, $1_M = e_{0,\infty _M}$; therefore,
  $M=\{0,\infty_M\}$ and $R\cong\mathbf{E}_M\cong \mathbf{B}$.

  (iii) $\Longrightarrow$ (i).  It is obvious.
\end{proof}

\begin{corollary}
  For a lattice-ordered aic-semiring $R$ the following statements are
  equivalent:
  \begin{enumerate}[\ \ (i)\ ]
  \item $R$ is ideal-simple;
  \item $R$ is congruence-simple;
  \item $R$ is simple;
  \item $R\cong \mathbf{B}$.
  \end{enumerate}
\end{corollary}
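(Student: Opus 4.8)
The plan is to reduce almost everything to Theorem~6.7 and to isolate the single genuinely new implication. Since a lattice-ordered aic-semiring is in particular a lattice-ordered semiring, Theorem~6.7 immediately gives the equivalence of (ii), (iii) and (iv); and (iii)~$\Longrightarrow$~(i) is nothing but the definition of simpleness (a simple semiring is by definition both congruence- and ideal-simple). Hence the only implication that needs a separate argument is (i)~$\Longrightarrow$~(iv), which then closes the cycle $\mathrm{(i)}\Rightarrow\mathrm{(iv)}\Rightarrow\mathrm{(iii)}\Rightarrow\mathrm{(i)}$ and folds (ii) in through Theorem~6.7.

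For (i)~$\Longrightarrow$~(iv) I would exploit the order structure recorded just before Theorem~6.7: in a lattice-ordered semiring $1$ is the infinite element, so $1$ is the greatest element of $(R,\le)$, while $0$ is the least. Assume $R$ is ideal-simple and suppose, for contradiction, that $R\ncong\mathbf{B}$; as $R$ is a chain with bottom $0$ and top $1$, there is then an element $a$ with $0<a<1$. First I would form the principal down-set $(a]:=\{x\in R\mid x\le a\}$ and check that it is a two-sided ideal of $R$: it is closed under addition because $x,y\le a$ gives $x+y=x\vee y\le a$, and it is closed under multiplication by arbitrary $r\in R$ because $rx\le r\wedge x\le x\le a$ and, symmetrically, $xr\le a$, where the first inequality is exactly the defining relation $ab\le a\wedge b$ of a lattice-ordered semiring. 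This ideal is nonzero, since $a\in(a]$ and $a\ne 0$, and proper, since $1\notin(a]$ (the relation $1\le a$ would force $a=1$). This contradicts ideal-simpleness, so no such $a$ exists and $R=\{0,1\}\cong\mathbf{B}$, proving (iv).

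The verification that $(a]$ is an ideal is routine, so the only delicate point --- and the step I would treat as the crux --- is guaranteeing that $(a]$ is simultaneously nonzero and proper; this is precisely what pins the element $a$ strictly between the bottom $0$ and the top $1$ and hence forces $|R|=2$. It is worth noting that the chain (aic) hypothesis is not actually essential to this argument, since the down-set $(a]$ is an ideal in any lattice-ordered semiring, so (i) could in fact be appended to the list in Theorem~6.7 as well; I keep the aic framing only to stay within the section's setting. Finally, as a consistency check one may observe (iv)~$\Longrightarrow$~(i) directly, since $\mathbf{B}$ is simple and a fortiori ideal-simple, which is already subsumed by the cycle above.
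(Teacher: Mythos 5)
Your proof is correct, and its skeleton matches the paper's: the equivalence of (ii), (iii), (iv) is delegated to Theorem~6.7, and the only new work is the loop through (i). Where you differ is in the ideal witnessing (i)~$\Longrightarrow$~(iv). The paper takes the single subset $I:=\{x\in R\mid x<1\}$ and observes it is an ideal; since $1\notin I$, ideal-simpleness forces $I=0$, and as $1$ is the top element this yields $R=\{0,1\}\cong\mathbf{B}$. Note that additive closure of that $I$ genuinely uses the chain hypothesis: from $x,y<1$ one concludes $x+y=x\vee y<1$ because in a chain $x\vee y\in\{x,y\}$, whereas in a general lattice two elements strictly below $1$ may join to $1$. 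Your principal down-set $(a]$, for a chosen $a$ with $0<a<1$, sidesteps this: $x,y\le a$ gives $x\vee y\le a$ in any lattice, and multiplicative closure needs only $rx\le r\wedge x\le x\le a$ (and symmetrically on the right). Consequently your side remark is accurate: your argument shows ideal-simpleness can be appended to the list of equivalent conditions in Theorem~6.7 for arbitrary lattice-ordered semirings, a small strengthening that the paper's ideal does not deliver without modification. In exchange, the paper's ideal is canonical --- no choice of element $a$ is needed --- which is why its proof fits in one line. At bottom both proofs are the same maneuver: manufacture a proper nonzero ideal out of the order structure and let ideal-simpleness collapse $R$ to $\{0,1\}$.
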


\begin{proof}
  (i) $\Longrightarrow$ (iv).  It follows from the observation that
  the subset $I := \{{a\in R}\mid {a<1}\}\subset R$ is an ideal.

  The implication (iv) $\Longrightarrow$ (i) is obvious.

  The remaining implications follow from Theorem~6.7.
\end{proof}

\end{document}